\newtheorem{proposition}{Proposition}[section]
\newtheorem{lemma}[proposition]{Lemma}
\newtheorem{theorem}[proposition]{Theorem}
\newtheorem{hypothesis}[proposition]{Hypothesis}
\newtheorem{notita}[proposition]{Remark}
\newenvironment{remark}{\begin{notita}\rm}{\hfill$\Box$\\[0.5ex]\end{notita}}
\newenvironment{proof}{\noindent{\em Proof.}}{\hfill$\Box$}
\newcommand{\R}{{\mathbb R}}
\newcommand{\N}{{\mathbb N}}
\newcommand\qin{\quad\hbox{in}\quad}
\newcommand\qon{\quad\hbox{on}\quad}
\newcommand\qan{\quad\hbox{and}\quad}
\newcommand{\ds}{\displaystyle }
\newcommand{\disp}{\displaystyle }
\renewcommand\div{{\mathrm{div}}}
\newcommand{\jump}[1]{[\![ #1 ]\!]}
\newcommand{\triple}[1]{|\!|\!| #1 |\!|\!|_h}
\newcommand{\bnu}{\bfm\nu}
\newcommand{\bfm}[1]{\mbox{\boldmath $#1$}}
\newcommand{\bbe}{\bfm\beta}
\def\curl{\mathop{\mathrm{curl}}\nolimits}
\newcommand{\e}{\bm{e}}
\title{On the discontinuous Galerkin method for solving
boundary value problems for the Helmholtz equation: A priori and a posteriori error
analyses\thanks{This research was partially supported by Spanish Project 
MTM2010-21037, the Direcci\'on de Investigaci\'on of the Universidad Cat\'olica
de la Sant{\'\i}sima Concepci\'on,  FONDECYT project
No. 1130158, BASAL project CMM, Universidad de Chile; Centro de
Investigaci\'on en
Ingenier\'{\i}a Matem\'atica $($CI$^2$MA$)$, Universidad de Concepci\'on; and CONICYT through project Anillo ACT1118 (ANANUM).}}
\author{
{\sc Tom\'as P. Barrios}\thanks{Departamento de Matem\'atica y
F{\'\i}sica Aplicadas, Universidad Cat\'olica de la Sant\'{\i}sima
Concepci\'on, Casilla 297, Concepci\'on, Chile, e-mail: {\tt tomas}@{\tt
ucsc.cl}} \quad
 {\sc Rommel Bustinza}\thanks{CI$^2$MA and Departamento de Ingenier\'\i a
Matem\' atica, Universidad de Concepci\'on, Casilla 160-C, Concepci\' on,
Chile, e-mail: {\tt rbustinz}@{\tt ing-mat.udec.cl}} \quad
and \quad
 {\sc V\'{\i}ctor Dom\'{\i}nguez}\thanks{Departamento de Ingenier\'{\i}a
Matem\'atica e Inform\'atica, Universidad P\'ublica de Navarra, Campus de
Tudela, 31500 - Tudela, Spain, e-mail: {\tt victor.dominguez}@{\tt
unavarra.es}}}
\begin{document}

\date{ } 
\maketitle

\begin{abstract}
We apply the local discontinuous Galerkin (LDG for short)
method to solve a mixed boundary value problems for the  Helmholtz equation in bounded
polygonal domain in 2D. Under some assumptions
on regularity of the solution of an adjoint problem, we prove that: (a) the
corresponding indefinite discrete scheme is well posed; (b) there is 
convergence with the expected convergence rates as long as the
meshsize $h$ is small enough. We give precise information on how small $h$ has
to be in terms of the size of the wavenumber and its distance to the
set of eigenvalues for the same boundary value problem for the Laplacian. We
also present an a posteriori error estimator showing both  the reliability and
efficiency of the estimator complemented with detailed information on the
dependence of the constants on the wavenumber. We finish presenting extensive
numerical experiments which illustrate the theoretical results proven in this paper and
suggest that stability and convergence may occur under less restrictive assumptions
than those taken in the present work. 
\end{abstract}

{\bf Key words}: LDG, Helmholtz problem, indefinite bilinear forms.

\vspace{5pt}

{\bf Mathematics subject classifications (1991)}: 65N30; 65N12; 65N15
%
\section{Introduction}\label{section1}

In this paper we are interested in the numerical analysis of discontinuous Galerkin (dG) schemes for the
following model boundary value problem for the Helmholtz equation:
\begin{equation}\label{eq1}
-\Delta u-\omega^2u=f\qin\Omega\,,\quad u=g_D\qon\Gamma_D\,,\qan
\partial_{\bm{\nu}}u=g_N\qon\Gamma_N\,.
\end{equation}
Here, $\Omega$ is a bounded polygonal Lipschitz domain in $\R^{2}$, whose boundary $\Gamma:=\partial\Omega$ is decomposed into two
disjoint sets $\Gamma_D$ and $\Gamma_N$ (that is
$\Gamma=\bar{\Gamma}_D\cup\bar{\Gamma}_N$), with $|\Gamma_D|>0$, and $\omega>0$ denotes a fixed wave number (of corresponding wave length $\lambda=2\pi/\omega$). The right hand side
$f$ is a source term in $L^2(\Omega)$, while the boundary data $g_D\in
H^{1/2}(\Gamma_D)$ and $g_N\in L^2(\Gamma_N)$, and $\bm{\nu}$ represents the
outward
normal unit vector to $\Gamma$.

Now, introducing the auxiliary unknown $\bm{\sigma}:=\nabla u$, problem
$(\ref{eq1})$
is rewritten as: {\it Find $(\bm{\sigma},u)$ in appropriate spaces, such that}
\begin{equation}\label{eq2}
\begin{array}{c}
 \bm{\sigma}=\nabla u\qin\Omega\,,\quad -\div\bm{\sigma} -\omega^2u = f
\qin\Omega\,,\cr\cr
 u=g_D\qon\Gamma_D\,,\qan \bm{\sigma}\cdot\bm{\nu} = g_N\qon\Gamma_N\,.
\end{array}
\end{equation}

The use of classical continuous Galerkin finite element
method provides good phase and amplitude accuracy as long as the mesh is fine
enough with respect to the wave number in the propagation region. This condition
could be too expensive even for moderate wave numbers. Thus, one of the main
concerns in acoustic finite element analysis is the adequacy of the finite
element mesh. Acousticians often use the so-called {\it rule of the thumb} (see
\cite{Ihlenburg-1998}) which prescribes a relation between the minimal number of
elements and the wave number. Indeed, using linear finite elements, an estimate
of the relative error is derived, which is of the form: $e_h\leq C_1\omega
h+C_2\omega^3h^2$, $\omega h<1$, and where the constants $C_1,C_2>0$ are
independent of the wave number $\omega$ and the mesh size $h$. The first term on
the right hand side of the previous estimate denotes the interpolation error
while the second one represents the {\it pollution effect}. It is clear that the
interpolation error would be constant if $\omega h$ remains constant. 
However, this choice does not guarantee the control of the pollution effect, that increases with $\omega$. This behavior
has became a real challenge for numerical analysts and therefore several
different approaches to deal with the pollution effect have been developed (see
for e.g. \cite{fhf-CMAME2001,fhh-CMAME2003,Ihlenburg-1998} and the references
therein). In \cite{g-APNUM1986,cd-SINUM1998,mw-CMAME1999}, the authors introduce
discontinuous methods for the acoustic problem considering non polygonal basis
functions. They use wave like functions in order to better capture the unique
features corresponding to medium and high frequency regime.

In the context of discontinuous Galerkin methods (see \cite{SIAM-a-2001} for an
overview) there have been several approaches to deal with this problem, too. In
\cite{ghp-M2AN2009, hp-2009} the authors develop a discontinuous Galerkin method for
Helmholtz equation in two dimensions, using local plane waves as trial and test
functions. Our aim here is similar to them, but using piecewise polynomial finite element
basis as starting point to keep the simplicity of coding. There are previous
works in this direction, such as \cite{bdga-CMAME2006,dbga-CMAME2007}, where the
authors propose two very similar DG schemes  for solving \eqref{eq1}, and include several numerical
examples, showing a good behavior of the discrete solution for different values
of the wave number $\omega$, by choosing appropriately the parameters that
define the method as well as taking into account the {\it rule of the thumb} $\omega h<1$.
However, up to the authors' knowledge, they do not establish (from a theoretical point of view) neither the
well-posedness of the scheme nor its corresponding a priori error estimate. On the other hand, in \cite{fw-siam2009} the authors applied a stabilized Interior Penalty Discontinuous Galerkin (IPDG) method to the Helmholtz equation with the first order absorbing boundary conditions (cf. \cite{em-cpam1979}) instead of the mixed boundary conditions considered here. Later, in \cite{fw-mcom2011}, they extend their analysis to the corresponding $hp-$ version, proving optimal convergence with respect to $h$ in the high frequency regime,  when a mesh condition is satisfied.

Therefore, we are interested in deriving the LDG formulation of $(\ref{eq2})$ and
establish that it has a unique solution as well as the optimal rates of
convergence (under suitable additional regularity on the exact solution), for moderated values of wave number. This
is done following the ideas given in \cite{hpss-M2AN2005}, for solving an
indefinite time-harmonic Maxwell problem, and are based on an earlier work of
Schatz (\cite{schatz-MCOM74}). Then, we develop (in essence) the a priori
error analysis applying duality arguments, so the existence and uniqueness of
the solution of the discrete scheme is proven for $h$ {\it small enough}. The
minimum size of $h$ to enter in the convergence region is shown to depend on
the size of $\omega$, the distance to the closest eigenvalue for the Laplacian
and on the regularity of the adjoint problem (see Hypothesis \ref{hypothesis} below). 

Another important aspect to take into account is the development of a technique that improves the quality of numerical approximation, without performing uniform refinement. One tool is the so called a posteriori error analysis, which gives us a full computable indicator that behaves as the exact error and thus is used in the subsequent adaptive algorithm. For this reason, this indicator is known as a posteriori error estimator. In the context of DG methods, we can refer to  \cite{ains-2007,bhl-2003,bcg-2004,hsw-2007,kp-2003,rw-2003}, where several a posteriori error analyses are developed for standard second order elliptic boundary value problems. Up to the authors' knowledge, there are few work on a posteriori error estimation using DG methods for Helmholtz problem. We can mention \cite{hs-2012}, which includes an a posteriori error estimation and a convergence analysis of the adaptive mechanism when applying an IPDG approach to deal with Helmholtz boundary value problems with the first order absorbing boundary conditions.  Then, as complementary part of the current analysis, we derive a reliable and quasi-efficient  a posteriori error estimate, in order to improve the quality of the numerical approximation, showing the dependence of the constants on the frequency $\omega$. 

The outline of the paper is as follows: in Section \ref{section2} we introduce
the main elements that let us to derive the LDG formulation associated to
$(\ref{eq2})$, reviewing some basic properties of the discrete scheme. The
well-posedness of the scheme as well as the corresponding a priori error bounds
are stated at the end of this section; the proofs of these results are carried
out in Section \ref{section3}. Next, in Section \ref{section4} we derive a residual a posteriori error estimate, which results to be reliable and locally efficient, up to high order terms.
 Finally, we show several numerical examples in Section \ref{section5}, validating our theoretical
results, and summarize the work presented in this paper, drawing some
conclusions.



\begin{hypothesis}\label{hypothesis}
 
Given $z\in L^2(\Omega)$ we will assume that
the problem
\begin{equation}
 \label{eq:adjointProblem}
 -\Delta \varphi-\omega^2\varphi = z,\quad\text{on $\Omega$}\, \qquad
\varphi|_{\Gamma_D}=0,\quad
\partial_{\bm{\nu}}\varphi|_{\Gamma_N}=0 .
\end{equation}
admits a unique solution, i.e.,  $-\omega^2$ is not an
eigenvalue for the Laplacian. 

We demand also an extra smoothness properties for ${\cal
L}_{\omega}$:  There exists 
$\varepsilon\in (1/2,1]$ such that the mapping
\begin{equation}
\label{eq:defLw}
\begin{array}{rcl}
{\cal L}_{\omega}:L^2(\Omega)&\longrightarrow&
H^{1+\varepsilon}(\Omega)\\
z&\longmapsto&\varphi
\end{array}
\end{equation}
is continuous $(H^t(\Omega)$ denotes the classical Sobolev space or order $t)$.


\end{hypothesis} 
We point out that this hypothesis implies in particular that problem \eqref{eq1}
is well posed. Regarding \eqref{eq:defLw},  it  is well known  that such
result
holds for the pure Dirichlet and Neumann
problem. With mixed boundary conditions this hypothesis is satisfied  if one
assumes  some geometric restrictions
on the angles between the sides of $\Gamma_D$ and $\Gamma_N$. 
We refer to \cite[Chapter 4]{Grisvard} or \cite{dauge-1988} for more
references on this topic. 

\begin{remark}
 Although the boundary problem \eqref{eq1} admits complex-valued data
functions $f$, $g_D$ and $g_N$, actually this is what one can expect in many
practical applications, we assume for lighten the analysis that
all these functions, and so the solution,  are real valued. The results can be
straightforwardly adapted to
the complex case. 

For similar reasons, we will assume that $\Gamma_D\ne \emptyset$. The pure
Neumann problem can be studied with a slight modification  of the
arguments developed here (see Remark \ref{remarkNeumann} for more information
on this topic). 
\end{remark}

\section{The LDG formulation}\label{section2}
In this section, we partially follow \cite{CMAME-hmw-2006} (see also
\cite{bb-2006} and \cite{bb-2007})
to derive a discrete formulation for the linear model $(\ref{eq2})$, applying a
consistent and conservative
discontinuous Galerkin method in gradient  form.
 
\subsection{Meshes, averages, and jumps}
We let $\{{\cal T}_h\}_{h>0}$ be a family of shape-regular triangulations of
$\bar{\Omega}$ (with possible hanging nodes) made up of straight-side triangles
$T$ with diameter $h_T$ and unit outward normal vector to $\partial T$ given by
$\bm{\nu}_T$. As usual, the index $h$ also denotes 
\[
\disp h:=\max_{T\in {\cal T}_h}h_T\,, 
\]
which without loss of generality we can assume to be less than $1$. 

Given $ {\cal T}_h$, its edges are defined as follows. An {\it interior edge}
of $ {\cal T}_h$ is the (nonempty) interior of $\partial T\cap\partial T'$,
where $T$ and
$T'$ are two adjacent elements of ${\cal T}_h$, not necessarily matching.
Similarly, a {\it boundary edge} of ${\cal T}_h$ is the (nonempty) interior of
$\partial T\cap\partial\Omega$, where $T$ is a boundary element of ${\cal T}_h$.
We
denote by ${\cal E}_I$ the list of all interior edges of
(counted only once) on $\Omega$, and  by ${\cal E}_D$ and ${\cal E}_N$  the
lists of all  edges lying on $\Gamma_D$ and $\Gamma_N$. Hence,  ${\cal
E}:={\cal E}_I\cup{\cal E}_D\cup{\cal E}_N$ is the set of all edges, or
skeleton,  of the triangulation ${\cal T}_h$. Further, for each $e\in{\cal
E}$, $h_e$ represents its length. Also, in what follows we assume that ${\cal
T}_h$ is of
{\it bounded variation}, which means that there exists a constant $c > 1$,
independent of the meshsize $h$, such that
\[
c^{-1}\,\,\leq\,\,\frac{h_T}{h_{T'}}\,\,\leq\,\,c
\]
 for each pair $T,\,T'\in{\cal T}_h$ sharing an interior edge.

Next, to define average and jump operators, let  $T$ and $T'$ be two adjacent
elements of ${\cal T}_h$ and $\bm{x}$ be an
arbitrary point on the interior edge $e=\partial T\cap\partial T'\in{\cal E}_I$.
In addition, let $v$ and $\bm{\tau}$ be scalar- and vector-valued functions,
respectively, that are smooth inside each element $T\in{\cal T}_h$. We denote
by
$(v_{T,e}, \bm{\tau}_{T,e} )$ the restriction of $(v_T, \bm{\tau}_T )$ to $e$.
Then, we define the averages at $\bm{x}\in e$ by:
\[
\{v\}:=\frac{1}{2}\big(v_{T,e}+v_{T',e}\big)\,,\quad
\{\bm{\tau}\}:=\frac{1}{2}\big(\bm{\tau}_{T,e}+\bm{\tau}_{T',e}\big)\,.
\]
Similarly, the jumps at $\bm{x}\in e$ are given by
\[
\jump{v}:=v_{T,e}\,\bm{\nu}_T + v_{T',e}\,\bm{\nu}_{T'}\,,\quad
\jump{\bm{\tau}}:=\bm{\tau}_{T,e}\cdot\bm{\nu}_T +
\bm{\tau}_{T',e}\cdot\bm{\nu}_{T'}\,.
\]
On boundary edges $e$, we set  
$\{v\}:=v$, $\{\bm{\tau}\}:=\bm{\tau}$, as well as $\jump{v}:=v\,\bm{\nu}$, 
and $\jump{\bm{\tau}}:=\bm{\tau}\cdot\bm{\nu}$. Hereafter, as usual, $\nabla_h$
denotes the piecewise gradient operator.

\subsection{LDG method}
Our purpose is to approximate the exact solution
$(\bm{\sigma},u)$ of $(\ref{eq2})$ by discrete functions $(\bm{\sigma}_h,u_h)$
in appropriate finite element space $\bm{\Sigma}_h\times V_h$,  defined as
follows 
\begin{eqnarray*}
V_h&:=&\big\{v_h\in L^2(\Omega)\ : \ v|_{T}\in\mathbb{P}_m(T),\quad \forall
T\in{\cal
T}_h \big\}\,,\\
\bm{\Sigma}_h&:=&\big\{\bm{\tau}_h\in [L^2(\Omega)]^2 \ : \
\bm{\tau}_h|_{T}\in [\mathbb{P}_{m'}(T)]^2,\quad \forall T\in{\cal
T}_h
\big\}. 
\end{eqnarray*}
In the expression above $\mathbb{P}_m(T)$ denotes the space of polynomials on $T$ of
degree $m$. We restrict ourselves to consider $m \le m'+1$ so that
$\nabla_h V_h\subset
\bm{\Sigma}_h$, which is required for guaranteeing the solvability of the discrete variational formulation. The usual choices in practical situations is letting  $m'=m$ or $m'=m-1$.

We are ready to introduce  the DG method: find
$(\bm{\sigma}_h,u_h)\in \bm{\Sigma}_h\times V_h$ so that for all $T\in {\cal
T}_h$ it satisfies
\begin{equation}\label{eq4}
\begin{array}{ll}
\disp \int_T \bm{\sigma}_h\cdot \bm{\tau} + \int_T u_h\div\bm{\tau} -
\int_{\partial
T}\widehat{u}\,\bm{\tau}\cdot\bm{\nu}_T= 0 & \forall\,\bm{\tau}\in
\bm{\Sigma}_h\,,\cr\cr
\disp \int_T \bm{\sigma}_h\cdot\nabla v - \int_{\partial
T}v\,\widehat{\bm{\sigma}}\cdot\bm{\nu}_T-\omega^2\int_{T}u_hv =
\int_T f\,v & \forall\,v\in V_h\,.
\end{array}
\end{equation}
The  functions $\widehat{u}$ and $\widehat{\bm{\sigma}}$ are the
so called {\em numerical fluxes} and 
depend on $u_h$, $\bm{\sigma}_h$, the boundary
data, and are set so that some compatibility conditions are
satisfied (see~\cite{SIAM-a-2001}).

Indeed, taking into account the approach from
\cite{ps-2002} and
\cite{ccps-2000}, the LDG is defined by taking  
$\widehat{u}:=\widehat{u}(u_h,g_D)$
and $\widehat{\bm{\sigma}}:=\widehat{\bm{\sigma}}(\bm{\sigma}_h,u_h,g_D,g_N)$
for each $T\in{\cal T}_h$ as
follows:
\begin{equation}\label{ldg1}
\widehat{\bm{u}}_{T,e}:=\left\{\begin{array}{ll}
\{u_h\}+\jump{u_h}\cdot\bbe & \textrm{if }e\in{\cal E}_I,\\[.5em]
g_D    & \textrm{if }e\in{\cal E}_D,\\[.5em]
u_h & \textrm{if }e\in{\cal E}_N,
\end{array}\right.
\end{equation}
and
\begin{equation}\label{ldg2}
\hspace{32pt}\widehat{\bm{\sigma}}_{T,e}:=\left\{\begin{array}{ll}
\{\bm{\sigma}_h\}-\jump{\bm{\sigma}_h}\bbe-\alpha\jump{u_h}
& \textrm{if }e\in{\cal E}_I,\\[.5em]
\bm{\sigma}_h-\alpha(u_h-g_D)\bm{\nu} & \textrm{if } e\in{\cal E}_D,\\[.5em]
g_N\bm{\nu} & \textrm{if } e\in{\cal E}_N,
\end{array}\right.
\end{equation}
where the auxiliary functions $\alpha$ (scalar) and $\bbe$
(vector), to be chosen appropriately, are single valued on each
edge $e\in{\cal E}$ and such that they allow us to prove the optimal rates of
convergence of our approximation. To this aim, we set
$\alpha:=\frac{\widehat\alpha}{\tt h}$, and $\bbe$ as an arbitrary vector in
$\R^2$. Hereafter, $\widehat{\alpha}>0$ is fixed, while $\tt h$ is defined
on
the skeleton of ${\cal T}_h$ by
\[
{\tt h}_e:=\left\{\begin{array}{ll}
               \max\{h_T,h_{T'}\} & \textrm{if }e\in{\cal E}_I\,, \\[.5em]
               h_T & \textrm{if }e\in{\cal E}_\Gamma\,.
                \end{array}
\right.
\]
Then, integrating by parts in the first equation in $(\ref{eq4})$ and summing
up
over all $T\in{\cal T}_h$, we arrive to the problem: {\it Find
$(\bm{\sigma}_h,u_h)\in\bm{\Sigma}_h\times V_h$ such that:}
\begin{equation}\label{LDG-form1}
\begin{array}{c}
\disp \int_{\Omega} \bm{\sigma}_h\cdot\bm{\tau} -
\int_{\Omega}\nabla_hu_h\cdot\bm{\tau} + S_h(u_h,\bm{\tau}) =
\int_{\Gamma_D}g_D\bm{\tau}\cdot\bm{\nu}\,,  \cr\cr
\disp \int_{\Omega}\nabla_hv\cdot\bm{\sigma}_h - S_h(v,\bm{\sigma}_h) +
\bm{\alpha}(u_h,v)\,-\,\omega^2\int_{\Omega}u_h\,v  =
\int_{\Omega} f\,v + \int_{\Gamma_D}\alpha\, g_D\,v + \int_{\Gamma_N}g_N\,v\,,
\end{array}
\end{equation}
for all $(\bm{\tau},v)\in\bm{\Sigma}_h\times V_h$, where the bilinear forms
appearing
above are given by 
\begin{eqnarray}
S_h(v,\bm{\tau})&:=&\int_{{\cal
E}_I}\big(\{\bm{\tau}\}-\jump{\bm{\tau}}\bbe\big)\cdot\jump{v} + \int_{{\cal
E}_D}v\bm{\tau}\cdot\bm{\nu} \,,
 \label{eq:defSbilinear}\\
\bm{\alpha}(v,w)&:=&\int_{{\cal
E}_I}\alpha\,\jump{v}\cdot\jump{w}\,+\,\int_{{\cal
E}_D} \alpha\,v\,w\,.
\label{eq:int-form}
\end{eqnarray}

\subsection{Sobolev and discrete norms}

We denote by $\|\cdot\|_{0,\Omega}$ the standard $L^2(\Omega)$ norm. Sobolev
spaces $H^r(\Omega)$ will appear in what follows, equipped with the norm 
\[
\|v\|_{r,\Omega}^2:=\|v\|_{0,\Omega}^2+\sum_{|\bm{\beta}|=r}\left\|\partial^{\bm{\beta
} }
v\right\|_{0,\Omega}^2 \,,
\]
for positive integer $r$ (we follow the usual multi-index notation).  For
fractional values of $r=n+\gamma$, with $n\in\mathbb{N}\cup\{0\}$ and
$\gamma\in(0,1)$, we have instead the norm
\[
\|v\|_{r,\Omega}^2:=\|v\|_{0,\Omega}^2+\sum_{|\bm{\beta}|=n}|\partial^{\bm{\beta
} }
v|_{\gamma, \Omega}^2 \,,
\] 
where
\[
 |f|_{\gamma,\Omega}^2:=\int_{\Omega}\!\int_\Omega\frac{|f(\bm{x})-f(\bm{y})|^2}
{ |\bm{x}-\bm{y}|^ {
2+2\gamma}}\,{\rm d}\bm{x}\,{\rm d}\bm{y}
\]
is the Slobodecki seminorm. Finally, the tensor Sobolev spaces 
$H^r(\Omega)\times H^r(\Omega)$ will be equipped with the usual norm and denoted with the same symbol $||\cdot||_{r,\Omega}$, to avoid any confusion in the context. 

The space $L^2(e)$, with $e$ being a edge or a finite union of edges, is defined
accordingly and we will use the same notation for the norm, namely,
$\|\cdot\|_{0,e}$.

Finally,   $H^1({\cal T}_h)$ denotes the space whose the elements
$v|_T\in H^{1}(T)$, for all $T\in{\cal T}_h$. 
We endow this space with the discrete seminorm and norm
(see
\cite{b-SIAM2003})  
\begin{equation}\label{eq:seminorm}
|v|_h\,:=\,\Big(\|\alpha^{1/2}\jump{v}\|_{0,{\cal
E}_I}^2\,+\,\|\alpha^{1/2}v\|_{0, {\cal E}_D}^2\Big)^{1/2}\quad\forall\,v\in
H^1({\cal T}_h)\,,
\end{equation}
and
\begin{equation}\label{eq:deftripleh}
\triple{v}^2:=\|\nabla_hv\|_{0,\Omega}^2+|v|_h^2\quad\forall\,v\in H^1({\cal
T}_h)\,.
\end{equation}
We point out that a  Poincar\'{e} type inequality
(see \cite{bg-2004-SSC} for a proof) holds: there exists $C_{\rm P}>0$, 
independent of ${\cal T}_h$, such that
\begin{equation}
 \label{eq:poincare}
 \|v\|_{0,\Omega}\le C_{\rm P}\triple{v}\quad\forall\,v\in H^1({\cal T}_h)\,.
\end{equation}

\section{Convergence and stability of LDG method}\label{section3}

This section is devoted to proving the a priori error estimate for the method. 

\begin{theorem}\label{theo:mainLDG}
There exists $h_0=h_0(\varepsilon,\omega)>0$ such that for all $h<h_0$ the 
numerical method
\eqref{eq:primalform} admits a unique solution $u_h\in V_h$. Moreover if $u\in
H^{l+1}(\Omega)$ with $1/2<l\le m$, there holds
\[
 \triple{u_h-u}+\|\bm{\sigma}_h-\nabla u\|_{0,\Omega}\le C(\varepsilon,\omega) 
\bigg[
\sum_{T\in{\cal T}_h}  h_T^{2l}
 \|u\|_{l+1,T}^2
\bigg]^{1/2},
\]\
with $C(\varepsilon,\omega)>0$ independent of $h$ and $u$. 
\end{theorem}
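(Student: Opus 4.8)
\emph{Plan and Step 1 (primal formulation, G\aa rding inequality).} I will follow the Schatz-type strategy for indefinite forms of \cite{hpss-M2AN2005,schatz-MCOM74}: rewrite the mixed scheme as a primal problem, prove a G\aa rding inequality, control the $L^2$--part of the error by a duality argument based on Hypothesis~\ref{hypothesis}, and absorb it for $h$ small. Since $m\le m'+1$ forces $\nabla_hV_h\subset\bm{\Sigma}_h$, the first equation in \eqref{LDG-form1} determines $\bm{\sigma}_h$ from $u_h$ and $g_D$ through the usual LDG lifting operators; substituting into the second equation gives the primal formulation \eqref{eq:primalform}, $\cB_h(u_h,v)=F_h(v)$ for all $v\in V_h$, with
\[
\cB_h(w,v)=\mathcal{A}_h(w,v)-\omega^2\int_\Omega w\,v ,
\]
where $\mathcal{A}_h$ collects the volume, lifting, jump and penalty ($\alpha=\widehat{\alpha}/{\tt h}$) terms and is independent of $\omega$. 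A direct computation with the liftings (as in \cite{CMAME-hmw-2006,ps-2002}) yields the coercivity $\mathcal{A}_h(v,v)\ge\alpha_0\triple{v}^2$ on $V_h$, with $\alpha_0>0$ independent of $h$ and $\omega$, hence the G\aa rding inequality
\[
\cB_h(v,v)\ \ge\ \alpha_0\triple{v}^2-\omega^2\|v\|_{0,\Omega}^2\qquad\forall\,v\in V_h ,
\]
together with the boundedness $|\cB_h(w,v)|\le C\|w\|_{*}\triple{v}$ for $w\in H^{1+\varepsilon}(\Omega)+V_h$, $v\in V_h$, where $\|\cdot\|_*$ is $\triple{\cdot}$ augmented with the mesh-weighted traces needed to control $\mathcal{A}_h$; on $V_h$ this norm is equivalent to $\triple{\cdot}$, and for $u\in H^{l+1}(\Omega)$ standard polynomial approximation on shape-regular meshes of bounded variation gives $\inf_{v_h\in V_h}\|u-v_h\|_*\le C\,[\sum_{T}h_T^{2l}\|u\|_{l+1,T}^2]^{1/2}$.

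\emph{Step 2 (consistency and duality).} The fluxes \eqref{ldg1}--\eqref{ldg2} are consistent, so the exact solution of \eqref{eq1} (with $\bm{\sigma}=\nabla u$) satisfies $\cB_h(u,v)=F_h(v)$ for all $v\in V_h$, whence the Galerkin orthogonality $\cB_h(u-u_h,v)=0$. Since the Helmholtz operator is formally self-adjoint and the LDG scheme is adjoint consistent, for each $z\in L^2(\Omega)$ its adjoint potential $\varphi=\mathcal{L}_\omega z$ (Hypothesis~\ref{hypothesis}) satisfies $\cB_h(w,\varphi)=\int_\Omega w\,z$ for all $w\in H^{1+\varepsilon}(\Omega)+V_h$. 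Taking $z=e:=u-u_h$ and using Galerkin orthogonality and the boundedness of $\cB_h$,
\[
\|e\|_{0,\Omega}^2=\cB_h(e,\varphi)=\cB_h(e,\varphi-\varphi_h)\le C\|e\|_*\triple{\varphi-\varphi_h}
\]
for every $\varphi_h\in V_h$; choosing $\varphi_h$ a suitable projection and using $\varepsilon\le1$ together with $\|\varphi\|_{1+\varepsilon,\Omega}\le C(\omega)\|e\|_{0,\Omega}$ (Hypothesis~\ref{hypothesis}) gives $\triple{\varphi-\varphi_h}\le C\,h^{\varepsilon}\|\varphi\|_{1+\varepsilon,\Omega}$, so
\[
\|e\|_{0,\Omega}\ \le\ C(\varepsilon,\omega)\,h^{\varepsilon}\,\|e\|_* .
\]
The $\omega$-dependence enters here only through the norm of $\mathcal{L}_\omega$, which is controlled by $\omega$ and by the distance of $-\omega^2$ to the Dirichlet/Neumann spectrum of $-\Delta$.

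\emph{Step 3 (conclusion and well-posedness).} Fix $v_h\in V_h$ and set $\eta:=u-v_h$, $\chi:=u_h-v_h=e-\eta\in V_h$. Combining the G\aa rding inequality, $\cB_h(\chi,\chi)=-\cB_h(\eta,\chi)$ (Galerkin orthogonality), the boundedness of $\cB_h$, the Poincar\'e inequality \eqref{eq:poincare}, the norm equivalence on $V_h$ and Step 2,
\[
\alpha_0\triple{\chi}^2\le\cB_h(\chi,\chi)+\omega^2\|\chi\|_{0,\Omega}^2\le C\|\eta\|_*\triple{\chi}+C(\varepsilon,\omega)\,h^{2\varepsilon}\triple{\chi}^2+C(\varepsilon,\omega)\|\eta\|_*^2 .
\]
Choosing $h_0=h_0(\varepsilon,\omega)$ with $C(\varepsilon,\omega)h_0^{2\varepsilon}\le\alpha_0/2$, absorbing the second term and using Young's inequality yields $\triple{\chi}\le C(\varepsilon,\omega)\|\eta\|_*$, hence $\triple{u-u_h}\le C(\varepsilon,\omega)\inf_{v_h\in V_h}\|u-v_h\|_*$; the flux estimate follows from $\bm{\sigma}_h=\nabla_hu_h-(\text{liftings of }u_h,g_D)$ and $\nabla u=\nabla_hu$, giving $\|\bm{\sigma}_h-\nabla u\|_{0,\Omega}\le C\triple{u-u_h}$, while Step 1 bounds the infimum by $[\sum_{T}h_T^{2l}\|u\|_{l+1,T}^2]^{1/2}$. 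Finally, running the same inequalities with $f=g_D=g_N=0$ (so $u=0$) forces $\triple{u_h}=0$; since $\triple{\cdot}$ is a norm on $V_h$ (thanks to $|\Gamma_D|>0$), $u_h=0$, so the square linear system \eqref{eq:primalform} is injective, hence uniquely solvable for all $h<h_0$, and $\bm{\sigma}_h$ is uniquely recovered.

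\emph{Main obstacle.} The delicate point is Step 2: one has to justify the adjoint consistency of the LDG scheme on the nonconforming space $H^{1+\varepsilon}(\Omega)+V_h$ with $\varepsilon>1/2$ (so the edge traces and the lifting terms are well defined), establish the fractional approximation estimate $\triple{\varphi-\varphi_h}\le C\,h^{\varepsilon}\|\varphi\|_{1+\varepsilon,\Omega}$ on meshes with hanging nodes, and---most importantly for the stated result---keep explicit track of how $C(\varepsilon,\omega)$, and hence the threshold $h_0$, depends on $\omega$ and on $\mathrm{dist}(-\omega^2,\mathrm{spec}(-\Delta))$, the threshold deteriorating as $\omega$ grows or $-\omega^2$ approaches an eigenvalue.
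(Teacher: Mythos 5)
Your overall architecture (primal reduction, G\aa rding/coercivity inequality, Schatz-type duality via ${\cal L}_\omega$, absorption for $h<h_0$, and recovery of $\bm{\sigma}_h$ from \eqref{eq:sigma}) is exactly the paper's. The genuine gap is in your Step 2: you assert that the exact solution satisfies $\cB_h(u,v)={\cal F}_h(v)$ for all $v\in V_h$, hence Galerkin orthogonality, and likewise that $\cB_h(w,\varphi)=\int_\Omega w\,z$ for the adjoint potential $\varphi={\cal L}_\omega z$. Both identities are false for the \emph{primal} form \eqref{eq:primalform}. The mixed scheme \eqref{eq4} is indeed consistent, but eliminating $\bm{\sigma}_h$ uses the discrete lifting ${\bf S}_h(v)\in\bm{\Sigma}_h$, whose defining relation \eqref{eq:defSh} holds only against test fields in $\bm{\Sigma}_h$; pairing ${\bf S}_h(v)$ with $\nabla u\notin\bm{\Sigma}_h$ produces the nonzero consistency residual $R_h(u,v)=S_h(v,\bm{\Pi}_{\bm{\Sigma}_h}\nabla u-\nabla u)$ of \eqref{eq:boundFirstTerm}, and the same mechanism produces the adjoint defect $S_h(\,\cdot\,,\nabla{\cal L}_\omega z-\bm{\Pi}_{\bm{\Sigma}_h}\nabla{\cal L}_\omega z)$ appearing in Lemma \ref{lemma:term2}. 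The paper states this explicitly: ``the primal formulation is not consistent.'' Consequently the identities $\cB_h(e,\varphi)=\cB_h(e,\varphi-\varphi_h)$ in your Step 2 and $\cB_h(\chi,\chi)=-\cB_h(\eta,\chi)$ in your Step 3 each drop a residual term.

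The gap is repairable: both defects are of the optimal orders, $O\big([\sum_T h_T^{2l}|\nabla u|_{l,T}^2]^{1/2}\big)$ and $O(h^{\varepsilon}\|{\cal L}_\omega\|\,\|z\|_{0,\Omega})$ respectively, and once inserted and bounded (this is precisely the content of \eqref{eq:boundFirstTerm}, Lemma \ref{lemma:term2} and Proposition \ref{prop:term2:v2}) your absorption argument goes through and yields the same constants \eqref{eq:c:01}--\eqref{eq:c:02}. But as written, Steps 2 and 3 rest on exact consistency and adjoint consistency, which do not hold; your ``main obstacle'' paragraph asks to \emph{justify} adjoint consistency, whereas what must actually be done is to quantify its failure and carry the two $S_h$-residuals through the duality estimate.
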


The proof is presented in  the next subsections.  We stress that
$C(\varepsilon,\omega)$ and $h_0(\varepsilon,\omega)$ is shown to be dependent  
(see \eqref{eq:c:01}-\eqref{eq:c:02})
on $\omega$, via how large and how close it is  from the closest 
eigenvalue for
the Laplacian,  and the regularity of the adjoint problem
\eqref{eq:adjointProblem}, represented by the parameter $\varepsilon$. 

Hence, we
start recalling some
well-known results which we present for the sake of completeness and give, in the
last part, the proof itself.

\subsection{Approximation properties of the discrete spaces}

We start recalling the  local approximation properties of piecewise
polynomials. Denote by $\Pi_T^m: L^2(T)\to \mathbb{P}_m$ the $L^2-$orthogonal
projection. Then there exists $C>0$,
independent of the meshsize, such that for each $s,t$ satisfying $0\leq s\leq 
m+1$ and $0\leq s<t$, there holds (cf. \cite{c-1978} and \cite{gs-2004})
\begin{equation}\label{interp1}
|w-\Pi_T^m w|_{s,T} \leq
C\,h_T^{\min\{t,m+1\}-s}\|w\|_{t,T}\quad\forall\,w\in H^t(T)\,,
\end{equation}
and
\begin{equation}\label{interp2}
|w-\Pi_T^m w|_{0,\partial T} \leq
C\,h_T^{\min\{t,m+1\}-1/2}\|w\|_{t,T}\quad\forall\,w\in H^t(T)\,.
\end{equation}
Therefore, if 
\[
\Pi_{V_h}:L^2(\Omega)\to V_h, \qquad
\bm{\Pi_{\Sigma_h}}:[L^2(\Omega)]^2\to\bm{\Sigma}_h \,,
\]
are the $L^2-$orthogonal projections on the discrete spaces $V_h$ and
$\bm{\Sigma}_h$, respectively, we have, from \eqref{interp1}-\eqref{interp2}: 
\begin{equation}
\label{eq:Pi-I}
 \|v-\Pi_{V_h} v\|_{0,\Omega}\le C_l\bigg[\sum_{T\in{\cal T}_h}
h_T^{2l}\|v\|_{l,T}^2\bigg]^{1/2},\quad \text{and}\quad 
 \triple{v-\Pi_{V_h} v}\le C_l\bigg[\sum_{T\in{\cal T}_h}
h_T^{2l-2}\|v\|_{l,T}^2\bigg]^{1/2}\,,
\end{equation}
for $v\in H^l({\mathcal T}_h)$, $1\le l\le m+1$, with  $C_l>0$ independent of $v$, $\bm{\sigma}$ and ${\cal
T}_h$.  Similarly, we obtain
\[
 \|\bm{\Pi_{\Sigma_h}}\bm{\sigma}-\bm{\sigma}\|_{0,\Omega}\le
C_l\bigg[\sum_{T\in{\cal T}_h}
h_T^{2l}\|\bm{\sigma}\|_{l,T}^2\bigg]^{1/2},
\] 
for $\bm{\sigma}\in [H^l({\mathcal T}_h)]^2$, with $1\le l\le m'+1$.

\subsection{The primal formulation of the LDG method}
\label{sec:LDG}

Associated to $S_h$ (cf. \eqref{eq:defSbilinear}), we introduce the discrete
lifting operators
  ${\bf S}_h:H^1({\cal
T}_h)\to \bm{\Sigma}_h$ and ${\bf G}_h:L^2(\Gamma_D)\to \bm{\Sigma}_h$ defined,
respectively, as the solutions of the problems
\begin{eqnarray}
\int_\Omega {\bf S}_h(v)\cdot\bm{\tau} =S_h(v,\bm{\tau})\qquad \forall\,
\bm{\tau}\in\bm{\Sigma}_h\,,\label{eq:defSh}\\
\int_\Omega {\bf G}_h(g_D)\cdot\bm{\tau} =\int_{\Gamma_D} g_D\:
\bm{\tau}\cdot\bm{\nu}\qquad\forall\,\bm{\tau}\in\bm{\Sigma}_h\,,\label{eq:defGh}
\end{eqnarray}
whose existence and uniqueness are guaranteed by Riesz representation theorem. 
We notice in passing that if $v\in H^1(\Omega)$ with
$v|_{\Gamma_D}=g_D$, then ${\bf G}_h(g_D)={\bf S}_h(v)$.

Thus, the first equation of \eqref{LDG-form1} can be read as 
\[
 \int_{\Omega} \bm{\sigma}_h\cdot\bm{\tau}=
 \int_{\Omega} (\nabla_h u_h-{\bf S}_h(u_h)+{\bf G}_h(g_D))\cdot\bm{\tau}\qquad
\forall\,
\bm{\tau}\in \bm{\Sigma}_h.
\]
Since $\nabla_h V_h\subset \bm{\Sigma}_h$,  we conclude
\begin{equation}
\label{eq:sigma}
 \bm{\sigma}_h=\nabla_h u_h-{\bf S}_h(u_h)+{\bf G}_h(g_D).
\end{equation}
In other words, we have expressed $\bm{\sigma}_h$ in terms of $u_h$ and the
Dirichlet data. Besides, from the second equation of \eqref{LDG-form1}, taking into account again ${\bf S}_h$, we obtain
\begin{eqnarray*}
 \int_\Omega \bm{\sigma}_h\cdot\nabla_h v
+\bm{\alpha}(u_h,v) -\int_{\Omega} {\bf S}_h(v)\cdot
\bm{\sigma}_h-\omega^2\int_{\Omega} u_h v=\int_{\Omega} f v +\int_{\Gamma_N}g_N
v+\int_{\Gamma_D}\alpha g_D
v.
\end{eqnarray*}
Using \eqref{eq:sigma} to substitute $\bm{\sigma}_h$ we arrive to the reduced (an equivalent)
primal form: {\it Find
$u_h\in V_h$ such that}
\begin{equation}\label{eq:primalform}
a_h(u_h,v)-\omega^2\int_{\Omega} u_hv={\cal{F}}_h(v)\qquad \forall\, v\in V_h\,,
\end{equation}
where 
\begin{eqnarray}
a_h(t,v)&:=&\int_{\Omega}(\nabla_h t -{\bf S}_h(t))\cdot(\nabla_h v -{\bf S}_h
(v))+\bm{\alpha}(t,v)\label{eq:defa_h}\,,\\
{\mathcal F}_h(v)&:=& \int_{\Omega} f v+\int_{\Gamma_N} g_N v+\int_{D}\alpha g_D
v-\int_{\Omega}  (\nabla_h v-{\bf S}_h(v)) \cdot {\bf G}_h(g_D)\,.
\label{eq:defl}
\end{eqnarray} 
This way we have established the next result.

\begin{theorem}
If $(\bm{\sigma}_h,u_h)\in\bm{\Sigma}_h\times V_h$ is a solution of 
\eqref{LDG-form1}, then 
$u_h\in V_h$ is a
solution of \eqref{eq:primalform}. Reciprocally, if $u_h\in V_h$ is a solution of
\eqref{eq:primalform} then $(\bm{\sigma}_h,u_h)\in\bm{\Sigma}_h\times V_h$, with $\bm{\sigma}_h:=\nabla_h u_h-{\bf S}_h(u_h)+{\bf
G}_h(g_D)$, is a solution of  \eqref{LDG-form1}.
\end{theorem}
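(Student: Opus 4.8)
Both implications are purely algebraic: the plan is to trace through the substitutions already anticipated in Section~\ref{sec:LDG}, the only structural ingredients being the inclusion $\nabla_h V_h\subset\bm{\Sigma}_h$ (guaranteed by the standing assumption $m\le m'+1$) and the fact that ${\bf S}_h(v),{\bf G}_h(g_D)\in\bm{\Sigma}_h$, which makes them admissible test functions in \eqref{LDG-form1} and in \eqref{eq:defSh}--\eqref{eq:defGh}. For the forward implication I would start from the first equation of \eqref{LDG-form1} and use \eqref{eq:defSh}--\eqref{eq:defGh} to rewrite it as $\int_\Omega\bm{\sigma}_h\cdot\bm{\tau}=\int_\Omega(\nabla_h u_h-{\bf S}_h(u_h)+{\bf G}_h(g_D))\cdot\bm{\tau}$ for all $\bm{\tau}\in\bm{\Sigma}_h$; since the integrand on the right lies in $\bm{\Sigma}_h$, testing against it forces the pointwise identity \eqref{eq:sigma}. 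Next I would take the second equation of \eqref{LDG-form1}, replace $S_h(v,\bm{\sigma}_h)$ by $\int_\Omega{\bf S}_h(v)\cdot\bm{\sigma}_h$ (legitimate because $\bm{\sigma}_h\in\bm{\Sigma}_h$), substitute \eqref{eq:sigma}, and collect terms: the pairing of $\nabla_h u_h-{\bf S}_h(u_h)$ with $\nabla_h v-{\bf S}_h(v)$ is exactly the first term of $a_h(u_h,v)$ in \eqref{eq:defa_h}, the $\bm{\alpha}$ and $\omega^2$ contributions are unchanged, and the ${\bf G}_h(g_D)$ piece, moved to the right-hand side and joined with $\int_\Omega fv+\int_{\Gamma_N}g_Nv+\int_{\Gamma_D}\alpha g_Dv$, assembles into ${\mathcal F}_h(v)$ as in \eqref{eq:defl}. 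This is precisely \eqref{eq:primalform}.

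For the converse I would define $\bm{\sigma}_h$ by \eqref{eq:sigma} and verify the two equations of \eqref{LDG-form1} directly. The first one is immediate: for $\bm{\tau}\in\bm{\Sigma}_h$ we get $\int_\Omega\bm{\sigma}_h\cdot\bm{\tau}-\int_\Omega\nabla_h u_h\cdot\bm{\tau}+S_h(u_h,\bm{\tau})=\int_\Omega\big({-}{\bf S}_h(u_h)+{\bf G}_h(g_D)\big)\cdot\bm{\tau}+S_h(u_h,\bm{\tau})$, and applying \eqref{eq:defSh}--\eqref{eq:defGh} this collapses to $\int_{\Gamma_D}g_D\,\bm{\tau}\cdot\bm{\nu}$. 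For the second equation I would simply run the forward computation backwards: starting from \eqref{eq:primalform}, expand $a_h(u_h,v)$ via \eqref{eq:defa_h}, use \eqref{eq:sigma} in the form $\nabla_h u_h-{\bf S}_h(u_h)=\bm{\sigma}_h-{\bf G}_h(g_D)$, turn the resulting lifting integrals back into $S_h(v,\bm{\sigma}_h)$ and a $\Gamma_D$ boundary term through \eqref{eq:defSh}--\eqref{eq:defGh}, and rearrange to recover the second line of \eqref{LDG-form1}.

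I do not expect a genuine obstacle here; the content is bookkeeping. The one point deserving care is the repeated step ``the integrand is itself an admissible test function'': deriving \eqref{eq:sigma} requires $\nabla_h u_h\in\bm{\Sigma}_h$, hence the hypothesis $m\le m'+1$, while the manipulation $S_h(v,\bm{\sigma}_h)=\int_\Omega{\bf S}_h(v)\cdot\bm{\sigma}_h$ requires $\bm{\sigma}_h\in\bm{\Sigma}_h$, which holds by construction. Everything else is distributing finite sums over the individual terms of $a_h$, $\bm{\alpha}$ and ${\mathcal F}_h$.
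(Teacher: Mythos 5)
Your proposal is correct and follows essentially the same route as the paper, whose proof is exactly the derivation in Section~\ref{sec:LDG}: use the liftings \eqref{eq:defSh}--\eqref{eq:defGh} and the inclusion $\nabla_h V_h\subset\bm{\Sigma}_h$ to obtain \eqref{eq:sigma}, then substitute into the second equation of \eqref{LDG-form1} to assemble $a_h$ and ${\mathcal F}_h$. Your explicit verification of the converse direction is a welcome (if routine) addition, since the paper leaves the reversibility of the substitution implicit.
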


The boundedness and ellipticity of bilinear form $a_h$ is established next.  
\begin{theorem}\label{theo:prop:ah}
 There exist  $C_{\rm cont}, c_{\rm coer} >0$ such that for all $t,v\in H^1({\cal
T}_h)$ there hold
\[
 |a_h(t,v)| \le   C_{\rm cont} \triple{t} \triple{v},\quad{\rm and}\quad
  a_h(v,v)  \ge c_{\rm coer} \triple{v}^2.
\]
\end{theorem}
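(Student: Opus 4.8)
The plan is to establish boundedness and coercivity directly from the definition \eqref{eq:defa_h}, exploiting the structure $a_h(t,v)=\int_\Omega(\nabla_h t-{\bf S}_h(t))\cdot(\nabla_h v-{\bf S}_h(v))+\bm\alpha(t,v)$. First I would record the only nontrivial analytic ingredient: a stability bound for the lifting operator ${\bf S}_h$. From its definition \eqref{eq:defSh} together with the trace estimate \eqref{interp2} applied to $\bm\tau\in\bm\Sigma_h$, an inverse inequality on each element, and the shape-regularity/bounded-variation assumptions on $\{{\cal T}_h\}$, one obtains
\[
 \|{\bf S}_h(v)\|_{0,\Omega}\;\le\; C_S\,|v|_h \qquad\forall\, v\in H^1({\cal T}_h),
\]
with $C_S>0$ independent of $h$; here $|v|_h$ is the jump seminorm \eqref{eq:seminorm}. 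Indeed, choosing $\bm\tau={\bf S}_h(v)$ in \eqref{eq:defSh} and estimating $S_h(v,\bm\tau)$ edge by edge via Cauchy--Schwarz gives $\|{\bf S}_h(v)\|_{0,\Omega}^2\le \big(\|\alpha^{-1/2}\{\bm\tau\}\|_{0,{\cal E}_I}+\dots\big)\,|v|_h$, and the scaled trace inequality converts the $L^2$-norms of $\bm\tau$ on edges, weighted by $\alpha^{-1/2}=(\h/\widehat\alpha)^{1/2}$, back into $\|\bm\tau\|_{0,\Omega}$.

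\textbf{Boundedness.} With the lifting bound in hand, boundedness is immediate: by Cauchy--Schwarz on the two terms of $a_h(t,v)$,
\[
 |a_h(t,v)|\;\le\;\|\nabla_h t-{\bf S}_h(t)\|_{0,\Omega}\,\|\nabla_h v-{\bf S}_h(v)\|_{0,\Omega}\;+\;|t|_h\,|v|_h,
\]
and then $\|\nabla_h v-{\bf S}_h(v)\|_{0,\Omega}\le \|\nabla_h v\|_{0,\Omega}+C_S|v|_h\le (1+C_S)\triple v$ by \eqref{eq:deftripleh}; so $|a_h(t,v)|\le C_{\rm cont}\triple t\,\triple v$ with $C_{\rm cont}=(1+C_S)^2+1$.

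\textbf{Coercivity.} Taking $t=v$ gives $a_h(v,v)=\|\nabla_h v-{\bf S}_h(v)\|_{0,\Omega}^2+|v|_h^2$. The second term already controls $|v|_h^2$. For the gradient part I would use a Young-type inequality in the form $\|\nabla_h v-{\bf S}_h(v)\|_{0,\Omega}^2\ge \frac12\|\nabla_h v\|_{0,\Omega}^2-\|{\bf S}_h(v)\|_{0,\Omega}^2\ge \frac12\|\nabla_h v\|_{0,\Omega}^2-C_S^2|v|_h^2$. Hence
\[
 a_h(v,v)\;\ge\;\tfrac12\|\nabla_h v\|_{0,\Omega}^2+(1-C_S^2)\,|v|_h^2.
\]
If $C_S<1$ this already yields coercivity with $c_{\rm coer}=\min\{\tfrac12,1-C_S^2\}$. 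Since $C_S$ depends only on the shape-regularity constant, the bounded-variation constant $c$, and the polynomial degrees, and in particular \emph{not} on $\widehat\alpha$, one recovers the smallness of $C_S$ by the standard LDG device of choosing the stabilization parameter $\widehat\alpha$ large enough — note $|v|_h^2$ scales like $\widehat\alpha$ while the cross term $\int_\Omega {\bf S}_h(v)\cdot\nabla_h v$ producing the loss scales like $\widehat\alpha^{1/2}$, so for $\widehat\alpha$ above an $h$-independent threshold one gets $a_h(v,v)\ge c_{\rm coer}\triple v^2$ outright.

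\textbf{Main obstacle.} The only genuine work is the lifting stability estimate $\|{\bf S}_h(v)\|_{0,\Omega}\le C_S|v|_h$ with a constant independent of $h$; this is where the scaled trace inequality, the inverse inequality on $\bm\Sigma_h$, and the mesh-regularity hypotheses (shape-regularity and bounded variation, so that $\h_e$ is comparable to $h_T$ and $h_{T'}$ on each interior edge) must be combined carefully to keep all constants mesh-independent. Everything else — the two Cauchy--Schwarz steps, Young's inequality, and assembling $C_{\rm cont}$ and $c_{\rm coer}$ — is routine, once the dependence on $\widehat\alpha$ has been tracked to justify coercivity.
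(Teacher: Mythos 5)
Your overall skeleton --- stability of the lifting ${\bf S}_h$ plus Cauchy--Schwarz for the continuity bound, and an expansion of the square for coercivity --- is exactly the argument behind the references the paper itself cites for this theorem (\cite{ps-2002,bg-2004-SSC}; the paper gives no proof of its own), and your continuity half is correct as written. The trouble is in the coercivity half. First, a small internal inconsistency: with the $\alpha$-weighted seminorm $|v|_h$ of \eqref{eq:seminorm}, the bound one actually extracts from \eqref{eq:defSh}, the discrete trace inequality applied to $\bm{\tau}\in\bm{\Sigma}_h$ and the bounded-variation hypothesis is $\|{\bf S}_h(v)\|_{0,\Omega}\le C_S|v|_h$ with $C_S\sim C(1+|\bbe|)\,\widehat\alpha^{-1/2}$; so $C_S$ \emph{does} depend on $\widehat\alpha$, contrary to what you assert, and it is precisely this dependence that makes your later ``take $\widehat\alpha$ large'' step do anything at all.

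Second, and more seriously, the conclusion you reach --- coercivity only for $\widehat\alpha$ above a threshold --- is strictly weaker than the theorem, which is asserted (and is true) for an arbitrary fixed $\widehat\alpha>0$: needing a large penalty is the interior-penalty situation, not the LDG one. The gap comes from using Young's inequality with the fixed weight $2$, which leaves you with the coefficient $1-C_S^2$ that may be negative. Since $a_h(v,v)=\|\nabla_h v-{\bf S}_h(v)\|_{0,\Omega}^2+|v|_h^2$ is a sum of squares, the repair is to weight the inequality: for any $\epsilon>0$,
\[
\|\nabla_h v\|_{0,\Omega}^2\le(1+\epsilon)\,\|\nabla_h v-{\bf S}_h(v)\|_{0,\Omega}^2+(1+\epsilon^{-1})\,C_S^2\,|v|_h^2 ,
\]
whence
\[
a_h(v,v)\ \ge\ \frac{1}{1+\epsilon}\,\|\nabla_h v\|_{0,\Omega}^2+\Big(1-\frac{1+\epsilon^{-1}}{1+\epsilon}\,C_S^2\Big)|v|_h^2 .
\]
Because $(1+\epsilon^{-1})/(1+\epsilon)\to0$ as $\epsilon\to\infty$, you can always choose $\epsilon$, depending only on $C_S$, so that both coefficients are positive. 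This gives $c_{\rm coer}>0$ for every fixed $\widehat\alpha>0$ (degenerating only as $\widehat\alpha\to0$), which is what the theorem claims; no largeness assumption on $\widehat\alpha$ is needed or permitted by the statement.
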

\begin{proof}
 We refer to \cite{ps-2002, bg-2004-SSC} for a proof of this result. We note
that $C_{\rm cont}:=\max\{2,2\|{\bf S}_h\|,\|{\bf S}_h\|^2\}$.
\end{proof}

We point out that this theorem is the key result for proving stability and
convergence of the method for the Laplace equation. However, the $L^2-$term
spoils the coercivity of the bilinear form and forces us to consider a
different approach for proving the convergence of the method.  This is what we
will describe in next subsection. 

\subsection{Proof of Theorem \ref{theo:mainLDG}}

We start assuming that the exact solution $u\in
H^{l+1}(\Omega)$ with $l>1/2$ and that there exists a numerical solution $u_h$
for the reduced  primal scheme $(\ref{eq:primalform})$. 

Take an arbitrary element  $v\in V_h$. Now, thanks to the
coercivity of $a_h$ (cf. Theorem \ref{theo:prop:ah}), there holds
\begin{eqnarray}
c_{\rm coer}\triple{u_h-v}^2&\le&
a_h(u_h-v,u_h-v)=a_h(u_h,u_h-v)-a_h(u,u_h-v)+a_h(u-v,u_h-v)\nonumber\\
&=&R_h(u,u_h-v)+\omega^2\int_{\Omega}(u_h-u)(u_h-v)
+a_h(u-v,u_h-v)\,,
\label{eq:uh-v}
\end{eqnarray}
where
\[
 R_h(u,q):={\mathcal F}_h(q)-a_h(u,q)+\omega^2\int_{\Omega} u q \qquad\forall\, q\in V_h\,,
\]
is the so-called {\it consistency term}. We point out in pass that, unlike the
original formulation of the method \eqref{eq4},  the consistency term does not
vanish for the exact solution $u$, i.e. the primal formulation is not consistent.

By using the continuity of the bilinear form $a_h$,  
we derive  from \eqref{eq:uh-v} that
\begin{eqnarray}
 c_{\rm coer} \triple{u_h-v}&\le& \sup_{0\ne z\in V_h}
\frac{1}{\triple{z}}  R_h(u,z)+\omega^2 \sup_{0\ne z\in V_h}
\frac{1}{\triple{z}}   \int_{\Omega} (u-u_h)
z  
\,+\,C_{\rm
cont}\triple{u-v}\,.
 \label{eq:toBound:LDG}
\end{eqnarray}

The first term is bounded straightforwardly by using
the following relations \cite{bg-2004-SSC,gs-2004}
\begin{equation}
\label{eq:boundFirstTerm}
  R_h(u,z)  = {S}_h(z,\bm{\Pi}_{\bm{\Sigma}_h}\nabla
u-\nabla u) 
 \le  C_{l,S}   \bigg[\sum_{T\in{\cal T}_h}
 h_T^{2l}|\nabla u|^2_{l,T}
\bigg]^{1/2}{\triple{z}}\qquad\forall\,z\in V_h\,,
\end{equation}
which holds for all $1/2<l\le m$.

Next we show a simple presentation of the boundedness of second order
 term in \eqref{eq:toBound:LDG}. To this aim, we require the following lemma.
Recall first the mapping ${\cal L}_\omega$ introduced in \eqref{eq:defLw}.

\begin{lemma}\label{lemma:term2}
Let $z\in L^2(\Omega)$ and $u$ the exact solution of \eqref{eq1}. If $u_h\in V_h$
is a solution of \eqref{eq:primalform} then for 
any $\psi\in
V_h$ it holds
\begin{eqnarray*}
\int_{\Omega}(u-u_h)z&=&   a_h(u-u_h,{\cal
L}_{\omega}z-\psi)+\omega^2\int_{\Omega}
(u-u_h)({\cal L}_{\omega}z-\psi)+
S_h(u_h-u, \nabla{\cal
L}_{\omega}z- \bm{\Pi}_{\bm{\Sigma}_h}\nabla{\cal L}_{\omega}z)\\
&&+
S_h(\psi,\nabla u-\bm{\Pi}_{\bm{\Sigma}_h}\nabla u).
\end{eqnarray*}
\end{lemma}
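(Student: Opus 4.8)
The plan is to exploit the adjoint problem \eqref{eq:adjointProblem}: set $\varphi := {\cal L}_\omega z \in H^{1+\varepsilon}(\Omega)$, so that $-\Delta\varphi - \omega^2\varphi = z$ with $\varphi|_{\Gamma_D}=0$ and $\partial_{\bm\nu}\varphi|_{\Gamma_N}=0$. The key observation is that $\varphi$ is regular enough to be fed into the LDG bilinear form in an "exact-solution" capacity, and that the bilinear form $a_h$ satisfies a Galerkin-type identity relating $a_h(w,\varphi) - \omega^2\int_\Omega w\varphi$ to the source pairing $\int_\Omega w z$ modulo the non-consistency term $S_h$. Concretely, I would first test the primal formulation (or rather the derivation leading to \eqref{eq:primalform}) to get a relation of the form
\[
a_h(w,\varphi) - \omega^2\int_\Omega w\varphi \;=\; \int_\Omega w z \;+\; (\text{lifting terms involving } S_h),
\]
valid for $w\in H^1({\cal T}_h)$, which is essentially the adjoint analogue of the identity ${\cal F}_h(q) = a_h(u,q) - \omega^2\int_\Omega uq + R_h(u,q)$ used to define the consistency term. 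Because $\varphi$ has enough regularity, the boundary/jump terms $\jump{\varphi}$, $\varphi|_{\Gamma_D}$ vanish where needed, so the residual-type corrections collapse to a single $S_h$-contraction against $\nabla\varphi - \bm\Pi_{\bm\Sigma_h}\nabla\varphi$, exactly as in \eqref{eq:boundFirstTerm}.

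Next I would apply this identity with $w = u - u_h$. On the one hand $a_h(u-u_h,\varphi) - \omega^2\int_\Omega(u-u_h)\varphi$ equals $\int_\Omega(u-u_h)z$ plus an $S_h$-term of the form $S_h(u_h-u,\nabla\varphi - \bm\Pi_{\bm\Sigma_h}\nabla\varphi)$ (sign bookkeeping to be checked). On the other hand, since $a_h$ is defined on all of $H^1({\cal T}_h)$ and $\psi\in V_h$, I would split $\varphi = ({\cal L}_\omega z - \psi) + \psi$ by bilinearity: the first piece yields $a_h(u-u_h,{\cal L}_\omega z - \psi) + \omega^2\int_\Omega(u-u_h)({\cal L}_\omega z-\psi)$, and the second piece, $a_h(u-u_h,\psi) - \omega^2\int_\Omega(u-u_h)\psi$, must be rewritten. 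Here I would use the consistency relation for $u_h$ against the discrete test function $\psi$: from \eqref{eq:primalform}, $a_h(u_h,\psi) - \omega^2\int_\Omega u_h\psi = {\cal F}_h(\psi)$, while from the definition of $R_h$ applied to the exact solution, $a_h(u,\psi) - \omega^2\int_\Omega u\psi = {\cal F}_h(\psi) - R_h(u,\psi)$; subtracting gives $a_h(u-u_h,\psi) - \omega^2\int_\Omega(u-u_h)\psi = -R_h(u,\psi) = -S_h(\psi,\bm\Pi_{\bm\Sigma_h}\nabla u - \nabla u) = S_h(\psi,\nabla u - \bm\Pi_{\bm\Sigma_h}\nabla u)$ by \eqref{eq:boundFirstTerm}. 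Collecting the four resulting contributions reproduces exactly the claimed formula for $\int_\Omega(u-u_h)z$.

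The main obstacle I expect is purely technical bookkeeping rather than conceptual: getting the signs and the precise form of the lifting/$S_h$ terms right when the adjoint solution $\varphi$ — which lies in $H^{1+\varepsilon}(\Omega)\cap H^1(\Omega)$ but \emph{not} in $V_h$ — is inserted into the primal bilinear form, since $a_h$ and ${\cal F}_h$ were originally built using numerical fluxes tied to the \emph{discrete} unknowns. One must verify that for $\varphi\in H^1(\Omega)$ with $\varphi|_{\Gamma_D}=0$ the lifting ${\bf G}_h$-type corrections and the $S_h(\varphi,\cdot)$ terms behave correctly (in particular ${\bf S}_h(\varphi)$ is the lifting of a genuinely $H^1$-conforming function, so $S_h(\varphi,\bm\tau) = \int_\Omega \nabla\varphi\cdot\bm\tau - \int_\Omega(\nabla_h\varphi - {\bf S}_h(\varphi))\cdot\bm\tau$ effectively, using that the jumps of $\varphi$ vanish), and that the $\omega^2$-terms line up so that only the stated $\omega^2\int_\Omega(u-u_h)({\cal L}_\omega z - \psi)$ survives. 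Once the identity is in hand, the downstream use (which is not part of this lemma) will be to choose $\psi = \Pi_{V_h}{\cal L}_\omega z$, bound $\|{\cal L}_\omega z - \psi\|$ and $\triple{{\cal L}_\omega z - \psi}$ by $h^\varepsilon\|z\|_{0,\Omega}$ via \eqref{eq:Pi-I} and Hypothesis \ref{hypothesis}, and the two $S_h$-terms by \eqref{eq:boundFirstTerm}, yielding the absorption argument for the duality term in \eqref{eq:toBound:LDG}.
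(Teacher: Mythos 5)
Your proposal follows essentially the same route as the paper: derive the adjoint identity $-\int_\Omega vz=-a_h(v,\varphi)+\omega^2\int_\Omega v\varphi+S_h(v,\nabla\varphi-\bm{\Pi}_{\bm{\Sigma}_h}\nabla\varphi)$ for $v\in H^1({\cal T}_h)$ and $\varphi={\cal L}_\omega z$ via elementwise integration by parts, the vanishing of $\jump{\nabla\varphi}$ and of ${\bf S}_h(\varphi)$, then take $v=u-u_h$, split off $\psi$, and use \eqref{eq:primalform} together with \eqref{eq:boundFirstTerm} to rewrite $a_h(u-u_h,\psi)-\omega^2\int_\Omega(u-u_h)\psi=-R_h(u,\psi)=S_h(\psi,\nabla u-\bm{\Pi}_{\bm{\Sigma}_h}\nabla u)$. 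The one item you flagged, the sign of the $\omega^2$-term, does come out as $-\omega^2\int_\Omega(u-u_h)({\cal L}_\omega z-\psi)$ when the computation is carried through, rather than the $+$ printed in the statement; this discrepancy is already present between the paper's statement and its own proof and is immaterial for the absolute-value bound used in Proposition \ref{prop:term2:v2}.
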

\begin{proof}
Fix $z\in L^2(\Omega)$ and set $\varphi:={\cal L}_{\omega}z\in
H^{1+\varepsilon}(\Omega)$ (recall that $\varepsilon>1/2$). Take then $v\in
H^1({\cal T}_h)$.
By
integrating by parts on each element of the grid ${\cal T}_h$ and using that
$\jump{\nabla \varphi}=0$ on any $e\in{\cal E}_N\cup{\cal E}_I$, we deduce
\begin{eqnarray*}
 -\int_{\Omega}v z&=&\int_{\Omega} v(\Delta
\varphi+\omega^2\varphi)=-\int_{\Omega} \nabla_h v\cdot\nabla\varphi
+\omega^2\int_{\Omega} v \varphi+
\int_{{\cal E}_D\cup {\cal E}_I } \jump{v}\cdot\{\nabla \varphi\}.
\end{eqnarray*}
Besides, since  $\varphi|_{\Gamma_D}=0$, $\jump{\varphi}=0$ for all $e\in
{\cal E}_D\cup{\cal E}_I$,
\[
  {\bf S}_h(\varphi)={\bf 0},\qquad \text{and}\qquad
\bm{\alpha}(\varphi,v)=0,\quad \forall\,
v\in H^1({\cal T}_h).
\]
Thus
\begin{eqnarray*}
 -\int_{\Omega} v z&=& -a_h(v,\varphi) +\omega^2\int_{\Omega} v\varphi
-\int_{\Omega} {\bf S}_h(v)\cdot \nabla\varphi+\int_{{\cal E}_D\cup {\cal E}_I }
\jump{v}\cdot\{\nabla \varphi\}\\
&=& -a_h(v,\varphi) +\omega^2\int_{\Omega} v\varphi
-\int_{\Omega} {\bf S}_h(v)\cdot
\bm{\Pi}_{\bm{\Sigma}_h}\nabla\varphi+\int_{{\cal
E}_D\cup {\cal E}_I }
\jump{v}\cdot\{\nabla \varphi\}\\
&=&-a_h(v,\varphi) +\omega^2\int_{\Omega} v\varphi+
 \int_{{\cal
E}_D\cup {\cal E}_I }
\jump{v}\cdot\{\nabla \varphi-\bm{\Pi}_{\bm{\Sigma}_h}\nabla \varphi\}
+
 \int_{{\cal E}_I }
\jump{ \bm{\Pi}_{\bm{\Sigma}_h}\nabla  \varphi}\ \bm{\beta}\cdot\jump{v}\\
&=&-a_h(v,\varphi) +\omega^2\int_{\Omega} v\varphi+
 \int_{{\cal
E}_D\cup {\cal E}_I }
\jump{v}\cdot\{\nabla \varphi-\bm{\Pi}_{\bm{\Sigma}_h}\nabla \varphi\}
-
 \int_{{\cal E}_I }
\jump{\nabla  \varphi- \bm{\Pi}_{\bm{\Sigma}_h}\nabla  \varphi}\
\bm{\beta}\cdot\jump{v}\\
&=&-a_h(v,\varphi) +\omega^2\int_{\Omega} v\varphi+S_h(v,  \nabla
\varphi-\bm{\Pi}_{\bm{\Sigma}_h}\nabla \varphi)\,,
\end{eqnarray*}
where we have applied sequentially the definition of $a_h$ cf. \eqref{eq:defa_h}, the
fact that ${\bf S}_h(v)\in {\bm \Sigma}_h$, the definitions  of ${\bf S}_h$
and its associated bilinear
form ${S}_h$ cf. \eqref{eq:defSbilinear} and \eqref{eq:defSh} respectively, and that
$\jump{\nabla \varphi}=0$ on any $e\in {\cal E}_{I}$.

Take  now $v=u-u_h$ above. We can then check that for any $\psi\in V_h$,
\begin{eqnarray*}
 -\int_{\Omega} (u-u_h)
z&=&-a_h(u-u_h,\varphi)+\omega^2\int_{\Omega}(u-u_h)\varphi+S_h(u-u_h,
\nabla
\varphi-\bm{\Pi}_{\bm{\Sigma}_h}\nabla \varphi)\\
&=&-a_h(u-u_h,\varphi-\psi)+\omega^2\int_{\Omega}(u-u_h)(\varphi-\psi)\\
&&+a_h(u_h,\psi)-\omega^2\int_{\Omega} u_h\psi
-a_h(u,\psi)+\omega^2\int_{\Omega} u \psi
+S_h(u-u_h, \nabla
\varphi-\bm{\Pi}_{\bm{\Sigma}_h}\nabla \varphi)\\
&=&-a_h(u-u_h,\varphi-\psi)+\omega^2\int_{\Omega}(u-u_h)(\varphi-\psi)\\
&&+{\mathcal F}_h(\psi) - a_h(u,\psi)+\omega^2\int_{\Omega} u \psi+
S_h(u-u_h, \nabla
\varphi-\bm{\Pi}_{\bm{\Sigma}_h}\nabla \varphi)\\
&=&-a_h(u-u_h,\varphi-\psi)+\omega^2\int_{\Omega}
(u-u_h)(\varphi-\psi)+R_h(u,\psi) \\
&&+
S_h(u-u_h, \nabla
\varphi-\bm{\Pi}_{\bm{\Sigma}_h}\nabla \varphi)\,.
\end{eqnarray*}
The result follows now readily by applying  \eqref{eq:boundFirstTerm}.
\end{proof}

Now we are in position to establish a bound of the second term in \eqref{eq:toBound:LDG}. Hereafter, we denote by
 $\|A\|_{X\to Y}$ the operator norm of a linear mapping
$A:X\to Y$ between two normed spaces $X$ and $Y$.

\begin{proposition}
\label{prop:term2:v2}
 Under the same notations defined above, and for all $u\in H^{l+1}({\cal T}_h)$
with $1/2<l\le m$,
\begin{eqnarray}
\sup_{0\ne z\in V_h}
\frac{1}{\triple{z}}
\bigg|\int_{\Omega}(u-u_h)z\bigg| 
& \le&  C_{\rm P}\|{\cal
L}_{\omega}\|_{L^2(\Omega)\to
H^{1+\varepsilon}(\Omega)}   \bigg[\big((C_{\rm
cont}C_\varepsilon+C_{\varepsilon,S}) h^{\varepsilon}+
C_{\rm P}C_\varepsilon\omega^2
h^{1+\varepsilon}\big)\triple{u-u_h}\qquad \nonumber\\
&&+ C_{l ,S}'  \bigg[ \sum_{T\in{\cal
T}_h} h_T^{2l}
 |\nabla u|^2_{l,T}
\bigg]^{1/2}\bigg]\,, \label{eq:bound(u-uh)z}
\end{eqnarray}
where $ C'_{l,S},C_\varepsilon, C_{\varepsilon,S}>0$, with $\varepsilon>1/2$
being as in \eqref{eq:defLw} and $C_{\rm P}>0$ the Poincar\'e-type inequality constant given in
\eqref{eq:poincare}, all of them independent of ${\cal T}_h$, $\omega$ and
$u$.
\end{proposition}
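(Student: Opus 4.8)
The plan is to start from the identity furnished by Lemma \ref{lemma:term2}, applied with the specific choice $\psi:=\Pi_{V_h}\varphi$ where $\varphi:={\cal L}_\omega z\in H^{1+\varepsilon}(\Omega)$, and then to bound each of the four resulting terms, dividing by $\triple{z}$ and taking the supremum over $0\ne z\in V_h$ at the end. The crucial structural point is that $\varphi$ depends linearly and boundedly on $z$: by Hypothesis \ref{hypothesis} we have $\|\varphi\|_{1+\varepsilon,\Omega}\le \|{\cal L}_\omega\|_{L^2(\Omega)\to H^{1+\varepsilon}(\Omega)}\|z\|_{0,\Omega}$, and by the Poincar\'e-type inequality \eqref{eq:poincare} we can further bound $\|z\|_{0,\Omega}\le C_{\rm P}\triple{z}$. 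So every occurrence of a norm of $\varphi$ or of $\varphi-\Pi_{V_h}\varphi$ ultimately produces a factor $C_{\rm P}\|{\cal L}_\omega\|_{L^2(\Omega)\to H^{1+\varepsilon}(\Omega)}\triple{z}$ together with an appropriate power of $h$; this is exactly the shape of the right-hand side of \eqref{eq:bound(u-uh)z}, and the sup over $z$ then just strips off the $\triple{z}$.

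Concretely I would estimate the four terms of Lemma \ref{lemma:term2} (with $\psi=\Pi_{V_h}\varphi$) as follows. For $a_h(u-u_h,\varphi-\Pi_{V_h}\varphi)$ I use the continuity of $a_h$ (Theorem \ref{theo:prop:ah}), then the approximation estimate $\triple{\varphi-\Pi_{V_h}\varphi}\le C_\varepsilon h^\varepsilon\|\varphi\|_{1+\varepsilon,\Omega}$, which follows from the second inequality in \eqref{eq:Pi-I} with $l=1+\varepsilon$ (note $1\le 1+\varepsilon\le m+1$ since $m\ge 1$); this yields the $C_{\rm cont}C_\varepsilon h^\varepsilon$ contribution. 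For $\omega^2\int_\Omega(u-u_h)(\varphi-\Pi_{V_h}\varphi)$ I bound by $\omega^2\|u-u_h\|_{0,\Omega}\|\varphi-\Pi_{V_h}\varphi\|_{0,\Omega}$, use \eqref{eq:poincare} on the first factor to get $C_{\rm P}\triple{u-u_h}$, and the first inequality in \eqref{eq:Pi-I} with $l=1+\varepsilon$ on the second to get $C_\varepsilon h^{1+\varepsilon}\|\varphi\|_{1+\varepsilon,\Omega}$; this produces the $C_{\rm P}C_\varepsilon\omega^2 h^{1+\varepsilon}$ contribution. For the lifting term $S_h(u_h-u,\nabla\varphi-\bm{\Pi}_{\bm{\Sigma}_h}\nabla\varphi)$ I invoke a bound of the form $|S_h(v,\bm{\Pi}_{\bm{\Sigma}_h}\nabla\varphi-\nabla\varphi)|\le C_{\varepsilon,S}\,h^{\varepsilon}\,\|\varphi\|_{1+\varepsilon,\Omega}\,\triple{v}$ — this is precisely the kind of estimate already used in \eqref{eq:boundFirstTerm} (from \cite{bg-2004-SSC,gs-2004}), specialized to $l=\varepsilon$ instead of $l>1/2$ — giving the $C_{\varepsilon,S}h^\varepsilon$ term. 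Finally $S_h(\Pi_{V_h}\varphi,\nabla u-\bm{\Pi}_{\bm{\Sigma}_h}\nabla u)$ is handled by the same $S_h$-estimate in its other slot: $|S_h(\Pi_{V_h}\varphi,\nabla u-\bm{\Pi}_{\bm{\Sigma}_h}\nabla u)|\le C'_{l,S}\triple{\Pi_{V_h}\varphi}\big[\sum_T h_T^{2l}|\nabla u|^2_{l,T}\big]^{1/2}$, and then $\triple{\Pi_{V_h}\varphi}\le \triple{\varphi-\Pi_{V_h}\varphi}+\triple{\varphi}\le C\|\varphi\|_{1+\varepsilon,\Omega}$ (stability of $\Pi_{V_h}$ in the triple-bar norm for $H^{1+\varepsilon}$ functions, again from \eqref{eq:Pi-I}), which yields the last term on the right of \eqref{eq:bound(u-uh)z}.

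Assembling the four bounds, factoring out the common $C_{\rm P}\|{\cal L}_\omega\|_{L^2(\Omega)\to H^{1+\varepsilon}(\Omega)}\triple{z}$, dividing by $\triple{z}$ and taking $\sup_{0\ne z\in V_h}$ gives exactly \eqref{eq:bound(u-uh)z}, with the constants $C_\varepsilon$, $C_{\varepsilon,S}$ coming from the approximation/lifting estimates at regularity $1+\varepsilon$ (resp.\ $\varepsilon$) and $C'_{l,S}$ from the lifting estimate at regularity $l$, all independent of ${\cal T}_h$, $\omega$ and $u$ as claimed. The main obstacle — and the one place where care is genuinely needed — is the fractional-order lifting estimate $|S_h(v,\nabla\varphi-\bm{\Pi}_{\bm{\Sigma}_h}\nabla\varphi)|\lesssim h^\varepsilon\|\varphi\|_{1+\varepsilon,\Omega}\triple{v}$ for $\varphi\in H^{1+\varepsilon}(\Omega)$ with $\varepsilon\in(1/2,1]$: one must trace through the edge terms in the definition \eqref{eq:defSbilinear} of $S_h$, insert $\bm{\Pi}_{\bm{\Sigma}_h}\nabla\varphi$, use the element-boundary trace/approximation estimate \eqref{interp2} (valid precisely because $\nabla\varphi\in H^\varepsilon$ with $\varepsilon>1/2$, so traces make sense) together with scaled trace inequalities and the bounded-variation property of the mesh, and then recombine via Cauchy–Schwarz over edges, controlling $\|\alpha^{1/2}\jump{v}\|_{0,{\cal E}_I}$ and $\|\alpha^{1/2}v\|_{0,{\cal E}_D}$ by $\triple{v}$; everything else is a routine application of the already-cited results.
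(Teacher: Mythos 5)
Your proposal follows essentially the same route as the paper: the choice $\psi=\Pi_{V_h}{\cal L}_\omega z$ in Lemma \ref{lemma:term2}, continuity of $a_h$ plus \eqref{eq:Pi-I} at regularity $1+\varepsilon$ for the first two terms, the $S_h$-estimate \eqref{eq:boundFirstTerm} applied with $l=\varepsilon$ (legitimate since $\varepsilon>1/2$) for the third, the splitting $\triple{\Pi_{V_h}\varphi}\le\triple{\varphi-\Pi_{V_h}\varphi}+\triple{\varphi}$ for the fourth, and a final application of the Poincar\'e inequality \eqref{eq:poincare} and Hypothesis \ref{hypothesis}. The argument and the resulting constants match the paper's proof.
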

\begin{proof}
Taking $\psi=\Pi_{V_h}{\cal L}_\omega z$  in  Lemma \ref{lemma:term2} and
applying Theorem \ref{theo:prop:ah} and the Cauchy-Schwarz inequality we derive
\begin{eqnarray}
\bigg|\int_{\Omega}(u-u_h)z\bigg|\!\! &\le &\!\!C_{\rm
cont}\triple{u-u_h}\triple{{\cal
L}_{\omega}z-\Pi_{V_h}{\cal L}_\omega z}+
\omega^2 \|u-u_h\|_{0,\Omega}\|{\cal L}_{\omega}z-\Pi_{V_h}{\cal L}_\omega
z\|_{0,\Omega}\nonumber\\
&&\!\!+
\Big|S_h(u_h-u, \bm{\Pi}_{\bm{\Sigma}_h} \nabla{\cal L}_{\omega}z- \nabla{\cal
L}_{\omega}z)\Big|+
\Big|S_h(\Pi_{V_h}{\cal L}_\omega z , \bm{\Pi}_{\bm{\Sigma}_h}\nabla u- \nabla
u)\Big|\,.\label{eq:01:lemma:term2:v2}
\end{eqnarray}
Estimate \eqref{eq:Pi-I} implies 
\begin{eqnarray*}
 \|{\cal L}_{\omega}z-\Pi_{V_h}{\cal L}_\omega
z \|_{0,\Omega}+
h \triple{{\cal L}_{\omega}z-\Pi_{V_h}{\cal L}_\omega
z}&\le& C_\varepsilon h^{1+\varepsilon}\  \|{\cal
L}_{\omega}z\|_{1+\varepsilon,\Omega}
\\
&\le&
C_\varepsilon h^{1+\varepsilon}\|{\cal
L}_\omega \|_{L^2(\Omega)\to
H^{1+\varepsilon}(\Omega)}  \|z\|_{0,\Omega}\,,
\end{eqnarray*}
which together with Poincar\'{e} inequality \eqref{eq:poincare} let us to
bound the first two terms of \eqref{eq:01:lemma:term2:v2}.

Regarding the third term,  using \eqref{eq:boundFirstTerm} we deduce
\begin{eqnarray*}
|
S_h(u-u_h, \nabla {\cal L}_\omega z-\bm{\Pi}_{\bm{\Sigma}_h} \nabla {\cal
L}_\omega z)|
&\le&
C_{\varepsilon, S}\triple{u-u_h}
\Bigg[ \sum_{T\in{\cal T}_h}  h_T^{2\varepsilon}
 |\nabla{\cal L}_\omega z|^2_{\varepsilon,T}
\Bigg]^{1/2}\\
&\le& C_{\varepsilon,S}
h^{\varepsilon}\triple{u-u_h}\|{\cal L}_\omega
z\|_{1+\varepsilon,\Omega}\\
&\le&
C_{\varepsilon,S}h^{\varepsilon}\|{\cal
L}_\omega \|_{L^2(\Omega)\to
H^{1+\varepsilon}(\Omega)}  \triple{u-u_h}\|z\|_{0,\Omega}.
\end{eqnarray*}
Finally, for the last term in \eqref{eq:01:lemma:term2:v2} we proceed
analogously as before to obtain:
\begin{eqnarray*}
|(S_h( \Pi_{V_h} {\cal L}_\omega z, \bm{\Pi}_{\bm{\Sigma}_h}\nabla
u-\nabla
u)|\!\!&& \\
&& \hspace{-2cm}\le C_{l,S}
\big(\triple{\Pi_{V_h}{\cal L}_\omega z  -{\cal L}_\omega z  }+
\triple{{\cal L}_\omega z}\big)
\Bigg[ \sum_{T\in{\cal T}_h}  h_T^{2l}
 |\nabla u|_{l,T}^2
\Bigg]^{1/2}\\
&& \hspace{-2cm}\le C_{l,S} (C_\varepsilon h^{\varepsilon}+1)\|{\cal
L}_\omega z \|_{1+\varepsilon,\Omega}
\Bigg[ \sum_{T\in{\cal T}_h}  h_T^{2l}
 |\nabla u|_{l,T}^2
\Bigg]^{1/2}\\
&& \hspace{-2cm}\le C_{l,S}'\|{\cal
L}_\omega   \|_{L^2(\Omega)\to
H^{1+\varepsilon}(\Omega)}
\Bigg[ \sum_{T\in{\cal T}_h}  h_T^{2l}
 |\nabla u|_{l,T}^2
\Bigg]^{1/2}\|z\|_{0,\Omega},
\end{eqnarray*}
with $C_{l,S}':=C_{l,S}(C_{\varepsilon}{\rm diam}(\Omega)^{\varepsilon}+1)$.
The proof is now finished, once we apply Poincar\'e inequality \eqref{eq:poincare}. 
\end{proof}

We are ready to prove the main result of this paper. 

\noindent{\bf\em Proof of Theorem \ref{theo:mainLDG}}. Let $u$
be the exact solution of \eqref{eq1}
 and suppose $u_h$ is a solution of \eqref{eq:primalform}.
Next, we take an arbitrary $v\in V_h$ and write
\[
 \triple{u_h-u}\le  \triple{u_h-v} +\triple{v-u}\,.
\]
By applying \eqref{eq:boundFirstTerm} and
\eqref{eq:bound(u-uh)z} of Proposition \ref{prop:term2:v2} in
\eqref{eq:toBound:LDG}, we derive the following bound for the
first term:
\begin{eqnarray*}
 (1- c(\omega,h,\varepsilon))\triple{u_h-v} &\le&
C_2(\omega,\varepsilon)\left[ \sum_{T\in{\cal T}_h}  h_T^{2l}
 |\nabla u|_{l,T}^2
\right]^{1/2}+c_{\rm coerc}^{-1} C_{\rm cont} \triple{u-v}\,,
\end{eqnarray*}
where
\begin{eqnarray}
c (\omega,h,\varepsilon)&:=& c_{\rm coer}^{-1}\omega^2 C_{\rm P}
\|{\cal L}_{\omega}\|_{L^2(\Omega)\to
H^{1+\varepsilon}(\Omega)} h^\varepsilon
\big(\big(C_{\rm cont}C_{\varepsilon} +C_{\varepsilon,S}\big)
+C_{\rm P} C_{\varepsilon} \omega^2
h\big),\label{eq:c:01}\\
C_2(\omega,\varepsilon)&:=&
c_{\rm
coer}^{-1} \big(1+ \omega^2 C_{\rm P}\|{\cal
L}_{\omega}\|_{L^2(\Omega)\to
H^{1+\varepsilon}(\Omega)}  \big)C'_{l,S}\,.\label{eq:c:02}
\end{eqnarray}
By taking $h_0$ small, say for instance,
\[
 c (\omega,h_0,\varepsilon)\le 1/2\,,
\]
and setting $v= \Pi_{V_h} u$, we have that for $h<h_0$,
\begin{equation}
 \label{eq:estimate:for_u-u_h}
  \triple{u_h-u}\le C(\omega,\varepsilon)\left[ \sum_{T\in{\cal T}_h}
h_T^{2l}
 |\nabla u|_{l,T}^2
\right]^{1/2}\,,
\end{equation}
where
\[
 C(\omega,\varepsilon):=
2 C_2(\omega,\varepsilon)  +
(2  c_{\rm coerc}^{-1} C_{\rm cont}   +1)C_l\,.
\]

Now, the proof of uniqueness solvability of the numerical scheme relies in
the fact that the associated homogeneous discrete linear system has only the
trivial solution. Indeed, if the exact
solution is $u=0$ and $u_h$ is a  
solution of the discrete method,  then \eqref{eq:estimate:for_u-u_h} yields
\[
 \triple{u_h}\le 0\,,
\]
and therefore $u_h=0$ is the only solution of the homogeneous scheme. Thus, we
conclude that the LDG scheme always has only one solution, for $h$ small enough. 

Finally, the convergence for $\bm{\sigma}_h$ follows from standard arguments.
Hence,  using
\eqref{eq:sigma},
\begin{eqnarray}
\|\bm{\sigma}_h-\nabla u\|_{0,\Omega}&\le& \|\nabla_h u_h-\nabla u\|_{0,\Omega}+
\|{\bf
S}_h(u_h)-{\bf G}_h(g_D)\|_{0,\Omega}\nonumber\\
&\le& \triple{u-u_h}+ \|{\bf
S}_h(u_h-u)\|_{0,\Omega}\nonumber\\
&\le& C\, \triple{u-u_h}\,,\label{eq:sigma-nablau}
\end{eqnarray}
and we end the proof.

\hfill $\Box$

\begin{remark}\label{remarkNeumann}
For the
pure Neumann problem,  the function $\triple{u_h}$ (see \eqref{eq:deftripleh})
is not longer a norm, but a seminorm. This problem can be covered by working
instead with $\triple{u_h}+\|u_h\|_{0,\Omega}$, 
which becomes  again  a norm, satisfying in addition the
Poincar\'{e} inequality. The bilinear form  
$a_h$ \eqref{eq:defa_h} has to be also slightly modified, by adding a $L^2$
term, to  make it elliptic in this norm. The rest of the analysis is
essentially the same. We leave this case as a simple exercise for the reader. 
\end{remark}

\subsection{The case of complex $\omega$}
Let us finish this section analyzing the case of $\omega$ being a complex
number with positive imaginary part.  Problem \eqref{eq1} admits now a unique
solution and it is easy to see that the LDG method is stable and convergent via
an inf-sup condition.

Let us prove that. First, we write
\[
 \omega=|\omega| e^{i{\theta}}\,,
\]
with $\theta\in(0,\pi)$. Then for all $0\ne v\in V_h$ and $\phi\in(0,\pi)$
\[
\sup_{0\ne t\in V_h}\frac{1}{\triple{t}}\bigg|
a_h(v,t)-\omega^2\int_\Omega {v t}\bigg|\ge
\frac{1}{\triple{v}}{\rm
Re}\bigg(
e^{i\phi} a_h(v,\overline{v})+ e^{i(2\theta+\phi-\pi)}|\omega|^2\int |v|^2
\bigg)\,,
\]
simply by taking $t=e^{i\phi} \overline{v}$.  Setting  
\[
\phi:=\bigg\{\begin{array}{ll}
\pi/2-2\theta,\quad& \theta\in(0,\pi/4),\\
              0,\quad&  \theta\in[\pi/4,3\pi/4],\\
3\pi/2-2\theta,\quad&\theta\in(3\pi/4,\pi),
             \end{array}
\]
and using the coercivity of the bilinear form $a_h$ (cf. Theorem
\ref{theo:prop:ah}) and that $\cos(2\theta+\phi-\pi)\ge 0$,  we can prove 
\[
 \sup_{0\ne t\in V_h}\frac{1}{\triple{t}}\bigg|
a_h(v,t)-\omega^2\int_\Omega v t\bigg|\ge  \frac{\cos\phi}{\triple{v_h}}\,
a_h(v,\overline{v})\ge
\hat{c}_{\rm coer}(\theta) \triple{v}\,,
\]
where
\[
\hat{c}_{\rm coer}(\theta):=
\bigg\{\begin{array}{ll}
              c_{\rm coer},\quad& \theta\in[\pi/4,3\pi/4]\,,\\[1.25ex]
|\sin 2\theta|c_{\rm coer},\quad& \theta\in(0,\pi/4)\cup  (3\pi/4,\pi)\,,
             \end{array}
\]
with the positive constant $c_{\rm coer}$ given in Theorem \ref{theo:prop:ah}.

Let $u_h$ be a numerical solution of \eqref{eq:primalform}. Then, for a given $v\in V_h$ we have
\begin{eqnarray*}
 \hat{c}_{\rm coer}(\theta)\triple{u_h-v}&\le&  \sup_{0\ne t\in
V_h}\frac{1}{\triple{t}}\bigg|
a_h(u_h-v,t)-\omega^2\int_\Omega (u_h-v)t\bigg|\\
&=&\sup_{0\ne t\in V_h}\frac{1}{\triple{t}}\bigg|
  R_h(u,t)  + a_h(u -v,t)-\omega^2\int_{\Omega}(u-v)t \bigg|\,.
\end{eqnarray*}
Hence taking $v=\Pi_{V_h}u$, and using \eqref{eq:boundFirstTerm}, Theorem \ref{theo:prop:ah} and \eqref{eq:Pi-I}, we easily derive
\begin{eqnarray}
 \triple{u-u_h}&\le& \triple{u-\Pi_{V_h} u}+\triple{u_h-\Pi_{V_h} u}\nonumber\\
&\le& (\hat{c}_{\rm coer}^{-1}(\theta)C_{\rm cont} +1)\triple{u -\Pi_{V_h}  
u} + \hat{c}_{\rm coer}^{-1}(\theta) C_{l,S}\bigg[ \sum_{T\in{\cal T}_h}  h_T^{2l}
 |\nabla u|_{l,T}^2\bigg]^{1/2}\nonumber\\ 
&&+ |\omega|^2 \hat{c}_{\rm coer}^{-1}(\theta)C_P C_l\bigg[ \sum_{T\in{\cal
T}_h} h_T^{2l+2}
 |\nabla u|_{l,T}^2\bigg]^{1/2}.\label{eq:converg_complex_omega}
\end{eqnarray}

Hence, the stability of the LDG method occurs without assuming any regularity
for the adjoint problem and it is not affected by $\omega$ itself, but by the
argument of $\omega$, i.e., by $\theta$. Notice, however, that    $\hat{c}_{\rm
coer}(\theta)\to 0$ as $\theta\to 0,\pi$. On the other hand, $|\omega|^2$ does
 penalize the convergence such
as it can be clearly seen in the last term in \eqref{eq:converg_complex_omega}.

\section{A posteriori error analysis}\label{section4}
 
The aim of this section is to develop an a posteriori error estimator for the
LDG scheme (\ref{LDG-form1}).  To this end,  we first use the auxiliary dual
problem \eqref{eq:adjointProblem} to bound the $\|u-u_h\|_{0,\Omega}$. Next, a
Helmholtz decomposition is introduced to derive a reliable and efficient  a
posteriori error estimate. Hereafter, we introduce $\curl v :=(-\frac{\partial
v}{\partial y},\frac{\partial v}{\partial x})$ for any $v\in H^1(\Omega)$, and
the Sobolev space $H^1_{\Gamma_D}:=\{ v\in H^1(\Omega): v=0 \qon \Gamma_D\}$.
 
 The main result of the present section is summarized in the next theorem.
 \begin{theorem}\label{theo:apost}
 Let $(\bm{\sigma},u)\in H(\div,\Omega)\times H^1(\Omega)$ and 
$(\bm{\sigma}_h,u_h)\in \bm{\Sigma}_h\times V_h$ the unique solution of Problems
\eqref{eq2} and \eqref{LDG-form1}, respectively.
Then there exist $C_{\rm rel},\, C_{\rm eff}>0$, independent of the meshsize and the wave number, such
that
\begin{equation}\label{eq:01:teo-apost}
\triple{u-u_h}^2 + \|{\bm\sigma}-{\bm\sigma}_h\|^2_{0,\Omega}\le
C^2_{\rm rel}(1+\omega^2)^2\,\eta^2:=C^2_{\rm rel}(1+\omega^2)^2\sum_{T\in{\cal T}_h}\eta^2_T\,,
\end{equation}
where for any $T\in{\mathcal T}_h$ we define
\begin{eqnarray}
 \eta^2_T&:=&\,h^2_T\,\|f+\omega^2 u_h+\Delta u_h\|^2_{0,T}+\|\nabla
u_h-\bm{\sigma}_h\|^2_{0,T}+h_T\,\left\|\widehat{\bm\sigma}
\cdot\bnu_T-\nabla u_h\cdot\bnu_T \right\|^2_{0, \partial
T\setminus\Gamma_D}\nonumber\\
&&+\|\alpha^{1/2}\jump{u_h}\|^2_{0,\partial T\cap {\cal
E}_I}+\|\alpha^{1/2}(g_D-u_h)\|^2_{0,\partial T\cap {\cal E}_D}\,.
\label{eq:02:teo-apost}
\end{eqnarray}
Moreover, for each $T\in{\mathcal T}_h$:
\begin{eqnarray}
\eta^2_T&\le& C^2_{\rm eff}\big(
\|\bm{\sigma}-\bm{\sigma}_h\|_{0,\mathcal{N}(T)}^2+\|\nabla u-\nabla_h
u_h\|_{0,\mathcal{N}(T)}^2+\omega^4\,h_T^2\,
\|u-u_h\|^2_{0,\mathcal{N}(T)}\nonumber\\
&& +\|\alpha^{1/2}\jump{u-u_h}\|_{0,
\partial T\cap{\cal E}_i }+\|\alpha^{1/2}(g_D-u_h)\|^2_{0,\partial T\cap {\cal
E}_D} \big) +\text{h.o.t.}\label{eq:03:teo-apost}
\end{eqnarray}
where  
\[
  \mathcal{N}(T):=\bigcup_{\partial T'\cap \partial T\in {\mathcal E} } T', 
\]
and $h.o.t.$ stands for higher order terms.

\end{theorem}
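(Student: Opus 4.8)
The strategy is the usual two-part split: first reliability (the global upper bound \eqref{eq:01:teo-apost}), then local efficiency (the lower bound \eqref{eq:03:teo-apost}), treated edge by edge and element by element via bubble functions. For reliability I would begin by controlling $\|u-u_h\|_{0,\Omega}$ through the adjoint problem \eqref{eq:adjointProblem}: set $z:=u-u_h$, let $\varphi:={\cal L}_\omega z$, and exploit the identity already derived inside the proof of Lemma \ref{lemma:term2}, which expresses $\int_\Omega (u-u_h)z$ in terms of $a_h(u-u_h,\varphi-\psi)$, the $L^2$ term, and lifting-operator contributions, with $\psi\in V_h$ a quasi-interpolant of $\varphi$. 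Then one integrates by parts elementwise, inserts the numerical fluxes \eqref{ldg1}--\eqref{ldg2}, and groups the resulting element residuals $f+\omega^2u_h+\Delta u_h$, the constitutive residual $\nabla_h u_h-\bm\sigma_h$, the normal-flux jumps $\widehat{\bm\sigma}\cdot\bnu_T-\nabla u_h\cdot\bnu_T$, and the $u_h$-jump/Dirichlet terms, bounding each by $\eta_T$ times a factor involving $\|\varphi-\psi\|$ (hence $h_T^{1+\varepsilon}$ or $h_T^{1/2+\varepsilon}$ on edges) and $\|z\|_{0,\Omega}$. This yields $\|u-u_h\|_{0,\Omega}\le C(1+\omega^2)\,\eta$; the $(1+\omega^2)$ tracks the $\|{\cal L}_\omega\|$ and $\omega^2$ factors.

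Next I would pass from the $L^2$ control of $u-u_h$ to the full norm $\triple{u-u_h}^2+\|\bm\sigma-\bm\sigma_h\|_{0,\Omega}^2$. The natural device — announced in the section preamble — is a Helmholtz decomposition of the error in $\bm\sigma$: write $\bm\sigma-\bm\sigma_h=\nabla\chi+\curl\xi$ with $\chi\in H^1_{\Gamma_D}(\Omega)$ and $\xi\in H^1(\Omega)$, test the gradient part against the PDE residual and the $\curl$ part against the constitutive residual (using $\curl$-free exact $\bm\sigma$), and combine with a suitable continuous function $v\in H^1_{\Gamma_D}$ approximating $u_h$ to absorb the nonconformity. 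The coercivity of $a_h$ (Theorem \ref{theo:prop:ah}) then lets one close the estimate: $\triple{u-u_h}^2\lesssim a_h(u-u_h,u-u_h)$ plus cross terms that are either residual contributions bounded by $\eta$ or the already-estimated $\omega^2\|u-u_h\|_{0,\Omega}$ term. Collecting everything gives \eqref{eq:01:teo-apost}.

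For efficiency I would proceed with the standard localization technique of Verf\"urth. For each element $T$, use an interior bubble $b_T$ to test the element residual $f+\omega^2u_h+\Delta u_h$, obtaining $h_T\|f+\omega^2u_h+\Delta u_h\|_{0,T}$ bounded by $\|\bm\sigma-\bm\sigma_h\|_{0,T}$, $\|\nabla u-\nabla_h u_h\|_{0,T}$, $\omega^2h_T\|u-u_h\|_{0,T}$, plus a higher-order data-oscillation term from replacing $f$ by its polynomial projection. For each edge $e$, use an edge bubble $b_e$ supported on $\mathcal N(T)$ to test the normal-flux jump $\widehat{\bm\sigma}\cdot\bnu-\nabla u_h\cdot\bnu$, yielding $h_T^{1/2}\|\cdot\|_{0,e}$ in terms of the same neighbourhood quantities. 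The terms $\|\nabla u_h-\bm\sigma_h\|_{0,T}$ and the $u_h$-jump/Dirichlet terms $\|\alpha^{1/2}\jump{u_h}\|$, $\|\alpha^{1/2}(g_D-u_h)\|$ are bounded directly: the first is $\|\nabla u-\nabla_h u_h\|_{0,T}+\|\bm\sigma-\bm\sigma_h\|_{0,T}$ since $\bm\sigma=\nabla u$, and the jump/Dirichlet terms equal $\|\alpha^{1/2}\jump{u-u_h}\|$, $\|\alpha^{1/2}(g_D-u_h)\|$ verbatim because $u$ is conforming and satisfies the Dirichlet condition. Summing over the fixed-overlap neighbourhoods gives \eqref{eq:03:teo-apost}.

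\textbf{Main obstacle.} The delicate point is reliability, specifically handling the nonconformity of the LDG space together with the indefinite $L^2$ term: the error $u-u_h$ does not lie in $V_h$ nor in $H^1(\Omega)$, so one cannot simply insert it into either $a_h$ or the weak form of \eqref{eq1}. The Helmholtz decomposition plus a carefully chosen conforming approximant is what bridges this gap, and tracking the $\omega$-dependence cleanly through $\|{\cal L}_\omega\|$ — so that all constants come out of the form $C_{\rm rel}(1+\omega^2)^2$ with $C_{\rm rel}$ frequency-independent — requires care in each estimate. The efficiency part, by contrast, is routine bubble-function bookkeeping, with the only mild subtlety being that the constitutive residual $\nabla u_h-\bm\sigma_h$ appears both in $\eta_T$ and as a data-like quantity, so one must be careful not to argue circularly.
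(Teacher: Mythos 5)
Your plan follows essentially the same route as the paper: an adjoint-problem duality argument to control $\|u-u_h\|_{0,\Omega}$, a Helmholtz decomposition to recover the energy norm, and Verf\"urth-type bubble-function localization for the local efficiency bound. The only (harmless) deviations are that the paper decomposes the broken gradient $\nabla_h(u-u_h)$ rather than $\bm{\sigma}-\bm{\sigma}_h$ --- the latter being dispatched in one line via $\|\bm{\sigma}-\bm{\sigma}_h\|_{0,\Omega}\le C\triple{u-u_h}$ from \eqref{eq:sigma-nablau} --- and that no appeal to the coercivity of $a_h$ or to a conforming approximant of $u_h$ is needed, since the jump part of $\triple{u-u_h}$ is already a piece of the estimator and the nonconformity is handled by the piecewise-constant projection $\Pi_0$.
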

 
The proof is presented in the two following subsections. 

\subsection{Reliability of the estimator}

Our first aim is to estimate $\|e_u\|_{0,\Omega}$, where $e_u:=u-u_h$ in $\Omega$. For this purpose, 
we take into account Hypothesis \ref{hypothesis} and introduce the function $\phi:={\cal L}_\omega e_u$, that is,
 \begin{equation}
 \label{Aux-Pro}
 -\Delta \phi-\omega^2\phi = e_u,\quad\text{in $\Omega$}\, \qquad
  \phi|_{\Gamma_D}=0,\quad
\partial_{\bm{\nu}} \phi|_{\Gamma_N}=0\,.
\end{equation}

Let $\Pi_0$ be  the piecewise constant projection from $H^1(\Omega)$ onto
$L^2(\Omega)$ defined by
\[
(\Pi_0z)|_T:=\left\{\begin{array}{ll}
\frac{1}{|T|}\int_Tz\,,\quad& \text{if $T\cap \Gamma_D=\emptyset$},\\
0,&\text{otherwise}.
\end{array}\right.
 \]
By combining the classical results on convergence of the $L^2$-orthogonal
projection and the local  Poincar\'e inequality on the triangles with a side
lying on $\Gamma_D$, we can prove
\begin{equation}\label{eq:bound:Pi0}
\|\psi-\Pi_0\psi\|_{0,T}\le C\,h_T\,
\|\nabla\psi \|_{0,T},\qquad
\|\psi-\Pi_0\psi\|_{0,\partial T}\le C\,h^{1/2}_T\,\|\nabla\psi \|_{0,T}\,, 
\end{equation}
for all $T\in  {\cal T}_h$, with $C>0$ independent of $T$ and $\psi$. (See 
\cite[Lemma 5.1]{bg-2004-SSC} for a detailed analysis of this
projection).


\begin{lemma}\label{lemma0-eu}
Let $z\in H^1_{\Gamma_D}(\Omega)$ and $\Pi_0$ the above projection. Then there
holds
$$
\sum_{T\in {\cal T}_h}\int_{\partial
T}\nabla e_u \cdot \bnu_T (z-\Pi_0 z) \,=\,
\sum_{T\in {\cal T}_h}\int_{\partial
T\setminus\Gamma_D}(\widehat{{\bm\sigma}}\cdot\bnu_T-\nabla u_h
\cdot\bnu_T)\,(z-\Pi_0 z) 
\,+\, \omega^2\displaystyle\int_{\Omega} e_u\Pi_0 z \,.
$$
\end{lemma}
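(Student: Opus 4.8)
The plan is to reduce the asserted identity to a comparison of the continuous equation in \eqref{eq1} with the local discrete equation \eqref{eq4}, both tested against the piecewise constant function $\Pi_0 z$, which belongs to $V_h$.

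First I would use $\bm{\sigma}=\nabla u$ to write $\nabla e_u=\bm{\sigma}-\nabla u_h$ on the left-hand side. Since $\Pi_0 z$ vanishes on every triangle meeting $\Gamma_D$ and $z|_{\Gamma_D}=0$, the function $z-\Pi_0 z$ vanishes on all edges contained in $\Gamma_D$, so the element-boundary sum may be restricted to $\partial T\setminus\Gamma_D$. The $\nabla u_h$ boundary terms then appear identically on both sides and cancel, so it remains to establish
\[
\sum_{T\in{\cal T}_h}\int_{\partial T\setminus\Gamma_D}\bm{\sigma}\cdot\bnu_T\,(z-\Pi_0 z)
\;=\;\sum_{T\in{\cal T}_h}\int_{\partial T\setminus\Gamma_D}\widehat{\bm{\sigma}}\cdot\bnu_T\,(z-\Pi_0 z)
\;+\;\omega^2\int_\Omega e_u\,\Pi_0 z .
\]
Next I would reorganise these element-boundary integrals into a sum over edges: the $\Gamma_N$ contributions cancel because $\bm{\sigma}\cdot\bnu=g_N=\widehat{\bm{\sigma}}\cdot\bnu$ there, while on an interior edge the two adjacent contributions combine — using that $\widehat{\bm{\sigma}}$ is single valued and that the normal trace of $\bm{\sigma}\in H(\div,\Omega)$ is continuous, i.e. $\jump{\bm{\sigma}}=0$ — into $\int_e(\{\bm{\sigma}\}-\widehat{\bm{\sigma}})\cdot\jump{z-\Pi_0 z}$. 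Since $z\in H^1(\Omega)$ forces $\jump{z}=0$, the whole task collapses to proving
\[
-\sum_{e\in{\cal E}_I}\int_e(\{\bm{\sigma}\}-\widehat{\bm{\sigma}})\cdot\jump{\Pi_0 z}\;=\;\omega^2\int_\Omega e_u\,\Pi_0 z .
\]

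To obtain this last relation I would test the second equation in \eqref{eq4} with $v=\Pi_0 z\in V_h$ and sum over $T$; because $\nabla(\Pi_0 z)$ vanishes elementwise the volume term $\int_T\bm{\sigma}_h\cdot\nabla v$ disappears, leaving $-\sum_T\int_{\partial T}\widehat{\bm{\sigma}}\cdot\bnu_T\,\Pi_0 z-\omega^2\int_\Omega u_h\,\Pi_0 z=\int_\Omega f\,\Pi_0 z$. In parallel I would multiply $-\div\bm{\sigma}-\omega^2 u=f$ (an $L^2(\Omega)$ identity because $\bm{\sigma}\in H(\div,\Omega)$) by $\Pi_0 z$, integrate over $\Omega$, and integrate by parts elementwise in the $H(\div)$ sense, again using $\nabla(\Pi_0 z)=0$, to get $-\sum_T\int_{\partial T}\bm{\sigma}\cdot\bnu_T\,\Pi_0 z-\omega^2\int_\Omega u\,\Pi_0 z=\int_\Omega f\,\Pi_0 z$. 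Subtracting the two equalities and passing once more to an edge sum (the $\Gamma_D$ edges drop since $\Pi_0 z=0$, the $\Gamma_N$ edges drop since the normal fluxes agree) yields precisely $\omega^2\int_\Omega e_u\,\Pi_0 z=-\sum_{e\in{\cal E}_I}\int_e(\{\bm{\sigma}\}-\widehat{\bm{\sigma}})\cdot\jump{\Pi_0 z}$, which closes the identity.

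Most of this is routine edge bookkeeping; the step I expect to require the most care — and which is the real content of the lemma — is making the two halves dovetail: checking that the $\nabla u_h$ boundary terms cancel verbatim, that the $\Gamma_D$ contributions are genuinely zero on both sides (this is exactly where the definition of $\Pi_0$ as vanishing on triangles touching $\Gamma_D$ is used), and that $\Pi_0 z$ is the admissible test function for the discrete momentum equation — it is its membership in $V_h$ together with $\nabla(\Pi_0 z)=0$ that isolates the flux term and, upon comparison with the continuous problem, produces the term $\omega^2\int_\Omega e_u\,\Pi_0 z$.
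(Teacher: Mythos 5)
Your argument is correct and rests on exactly the same ingredients as the paper's proof: the local conservation identity obtained by testing the second equation of \eqref{eq4} with piecewise constants (your choice $v=\Pi_0 z$ is the paper's $v=1$ on each $T$ multiplied by the constant $(\Pi_0 z)|_T$), the continuity of $\nabla u\cdot\bm{\nu}$ together with the single-valuedness of $\widehat{\bm{\sigma}}$ across interior edges, and the vanishing of $z-\Pi_0 z$ on $\Gamma_D$. The only difference is organizational — you carry the difference $z-\Pi_0 z$ through a single edge-by-edge bookkeeping, whereas the paper treats the $z$-part and the $\Pi_0 z$-part in two separate identities — so this is essentially the paper's proof.
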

\begin{proof}
Since $\jump{\nabla u}=\jump{\widehat{\bm{\sigma}}}=0$ on ${\cal E}_I$,  and  $z\in
H_{\Gamma_D}^1(\Omega)$, it holds
\begin{eqnarray*}
\sum_{T\in{\cal T}_h}\int_{\partial T}z\, \nabla
u\cdot\bnu_T\, = \int_{\Gamma_N} z\, g_N=\sum_{T\in{\cal T}_h} \int_{\partial
T\setminus\Gamma_D} z\,\widehat{\bm\sigma}\cdot\bnu_T\,.
\end{eqnarray*}
Observe that,  by taking $v=1$ in \eqref{eq4}, the following identity holds
\[
\disp\int_T f\,+\omega^2\int_T
u_h\,+\,\int_{\partial T} \widehat{\bm{\sigma}}\cdot\bnu_T\,=\,0\,.
\]
Therefore,  integrating by parts and taking into account that ${\Pi}_0 u$ is
constant on each triangle, 
\begin{eqnarray*}
\int_{\partial T}\nabla u\cdot\bnu_T\Pi_0 z&=& \int_T\Delta  u  \:\Pi_0z
=\int_T(-f-\omega^2 u)\Pi_0 z \\
&=& \int_{\partial T\setminus\Gamma_D}
\widehat{\bm{\sigma}}\cdot\bnu_T\, \Pi_0z  -\,\omega^2\int_T(u-u_h)\Pi_0z.
\end{eqnarray*}
(We have used also that $\Pi_0z|_{\Gamma_D}=0$). 
The result follows now readily. 
\end{proof}

\begin{lemma}\label{lemma43}
Let $z\in H^1_{\Gamma_D}(\Omega)$. Then
there holds
\begin{eqnarray*}
\disp\sum_{T\in {\cal T}_h}\int_T\nabla e_u \cdot\nabla z &=& \sum_{T\in
{\cal T}_h}\int_T (f+\omega^2u_h+\Delta u_h)(z-\Pi_0z)\\
&&
+\, \disp\sum_{T\in {\cal T}_h}\int_{\partial T\setminus \Gamma_D}
(\widehat{\bm{\sigma}}\cdot\bnu_T
-\nabla u_h\cdot\bnu_T)(z
-\Pi_0z)\,+\, \disp\omega^2\int_{\Omega}  e_u z\,.
\end{eqnarray*}
\end{lemma}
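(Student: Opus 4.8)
\textbf{Proof plan for Lemma \ref{lemma43}.}

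The plan is to start from the exact variational characterization of $u$ and then subtract the discrete quantities built from $u_h$, decomposing the test function $z$ as $z=(z-\Pi_0 z)+\Pi_0 z$ so that Lemma \ref{lemma0-eu} can be applied to the boundary contributions. First I would integrate by parts elementwise in $\sum_T\int_T\nabla e_u\cdot\nabla z$, writing it as $\sum_T\int_T\nabla u\cdot\nabla z - \sum_T\int_T\nabla u_h\cdot\nabla z$. For the first piece, integration by parts on each $T$ together with $-\Delta u-\omega^2 u=f$ in $\Omega$, the fact that $\jump{\nabla u}=0$ on ${\cal E}_I$, the Neumann condition $\nabla u\cdot\bnu=g_N$ on $\Gamma_N$, and $z|_{\Gamma_D}=0$ gives $\sum_T\int_T\nabla u\cdot\nabla z = \int_\Omega (f+\omega^2 u)z + \int_{\Gamma_N} g_N z$; but the boundary terms are more cleanly kept, as in Lemma \ref{lemma0-eu}, in the form $\sum_T\int_{\partial T}z\,\nabla u\cdot\bnu_T$, which equals $\sum_T\int_{\partial T\setminus\Gamma_D} z\,\widehat{\bm\sigma}\cdot\bnu_T$.

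Next I would integrate by parts on each $T$ in the $u_h$ term, using that $u_h|_T$ is a polynomial so $\Delta u_h$ is well defined elementwise: $\int_T\nabla u_h\cdot\nabla z = -\int_T\Delta u_h\,z + \int_{\partial T}\nabla u_h\cdot\bnu_T\,z$. Subtracting, the volume contributions assemble to $\sum_T\int_T(f+\omega^2 u+\Delta u_h)z$; I would then write $\omega^2 u = \omega^2 u_h + \omega^2 e_u$ to turn this into $\sum_T\int_T(f+\omega^2 u_h+\Delta u_h)z + \omega^2\int_\Omega e_u z$, which already produces the stated volume residual (against the full $z$, not $z-\Pi_0 z$) plus the $\omega^2\int_\Omega e_u z$ term — but with $z$, not $z-\Pi_0z$. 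The boundary contributions after subtraction are $\sum_T\int_{\partial T\setminus\Gamma_D}(\widehat{\bm\sigma}\cdot\bnu_T-\nabla u_h\cdot\bnu_T)\,z$ together with, on $\Gamma_D$, a term $-\sum_T\int_{\partial T\cap\Gamma_D}\nabla u_h\cdot\bnu_T\,z$ which vanishes since $z|_{\Gamma_D}=0$, and on ${\cal E}_I$ the $\nabla u_h$ terms combine into $\int_{{\cal E}_I}\jump{\nabla u_h}\cdot\{\cdots\}$-type expressions — but here one must be careful: $\jump{\nabla u_h}$ need not vanish, so these interior contributions are exactly the jump of $\nabla u_h\cdot\bnu$ across interior edges, which is what $\partial T\setminus\Gamma_D$ collects edge by edge when $\widehat{\bm\sigma}$ is single-valued.

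The remaining discrepancy is that I have the volume residual and the boundary residual tested against $z$ rather than $z-\Pi_0 z$. To fix the boundary term I would invoke Lemma \ref{lemma0-eu}, which rewrites $\sum_T\int_{\partial T\setminus\Gamma_D}(\widehat{\bm\sigma}\cdot\bnu_T-\nabla u_h\cdot\bnu_T)(z-\Pi_0 z)$ in terms of $\sum_T\int_{\partial T}\nabla e_u\cdot\bnu_T(z-\Pi_0 z)$ minus $\omega^2\int_\Omega e_u\Pi_0 z$; combining this with the elementwise Green identity $\sum_T\int_{\partial T}\nabla e_u\cdot\bnu_T\,\Pi_0 z = \sum_T\int_T\Delta e_u\,\Pi_0 z$ (using $\jump{\nabla e_u}=\jump{\nabla u}=0$ on ${\cal E}_I$ and $\Pi_0 z|_{\Gamma_D}=0$) and the identity $\int_T f+\omega^2\int_T u_h+\int_{\partial T}\widehat{\bm\sigma}\cdot\bnu_T=0$ from taking $v=1$ in \eqref{eq4}, the $\Pi_0 z$-pieces of the volume residual and the $\omega^2 e_u \Pi_0 z$ term cancel against each other, leaving exactly $z-\Pi_0 z$ in both the volume and the boundary residuals and a single surviving $\omega^2\int_\Omega e_u z$. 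The main obstacle I anticipate is bookkeeping of the interior-edge and $\Gamma_D$-edge terms: one has to check carefully that all $\Gamma_D$ contributions drop because $z$ and $\Pi_0 z$ vanish there, that the interior jump terms of $\widehat{\bm\sigma}$ and of the exact $\nabla u$ vanish by $\jump{\widehat{\bm\sigma}}=\jump{\nabla u}=0$, and that the $\Pi_0$-correction terms cancel precisely — so the cleanest route is to organize the computation exactly as in the proofs of Lemmas \ref{lemma0-eu} and, reversing its last step, to use Lemma \ref{lemma0-eu} as a black box rather than redoing the edge algebra.
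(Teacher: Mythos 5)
Your proposal is correct: every identity you invoke holds, and the final cancellation of the $\Pi_0 z$ contributions does go through (one checks, using that $(\Pi_0 z)|_T$ is constant and vanishes on triangles meeting $\Gamma_D$, that $\sum_{T}\int_{\partial T\setminus\Gamma_D}(\widehat{\bm\sigma}\cdot\bnu_T-\nabla u_h\cdot\bnu_T)\,\Pi_0 z=-\sum_{T}\int_T(f+\omega^2u_h+\Delta u_h)\,\Pi_0 z$, which is exactly what your bookkeeping requires). The difference from the paper is purely organizational, but worth noting because it explains why your version needs more edge algebra. The paper's proof inserts $\Pi_0 z$ at the very first step: since $\Pi_0 z$ is piecewise constant, $\nabla(\Pi_0 z)=0$ on each $T$, so $\sum_T\int_T\nabla e_u\cdot\nabla z=\sum_T\int_T\nabla e_u\cdot\nabla(z-\Pi_0 z)$ for free; a single elementwise integration by parts against $z-\Pi_0 z$, together with $-\Delta u-\omega^2u=f$, then produces the two residual terms already tested against $z-\Pi_0 z$ plus the boundary term $\sum_T\int_{\partial T}\nabla e_u\cdot\bnu_T(z-\Pi_0 z)$, to which Lemma \ref{lemma0-eu} applies verbatim and the $\omega^2$ terms recombine into $\omega^2\int_\Omega e_u z$. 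You instead integrate by parts against the full $z$ and restore $z-\Pi_0 z$ at the end, which forces you to re-derive (or unwind) part of the computation inside the proof of Lemma \ref{lemma0-eu} rather than using it once as a black box. Both routes use the same ingredients and are equally rigorous; the paper's ordering simply makes the $\Pi_0$-cancellation automatic. If you keep your ordering, I would recommend writing out the cancellation identity displayed above explicitly, since as stated your "the $\Pi_0 z$-pieces cancel against each other" is the one step a reader cannot verify without redoing the divergence-theorem computation on each element.
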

\begin{proof}
We proceed as in Lemma 3.2 in \cite{bcg-2004} (see also \cite{bb-2007} and
\cite{bb-2012}).  Note again that $(\Pi_0z)|_T$ is constant, and 
that $z|_{\Gamma_D}=\Pi_{0}z|_{\Gamma_D}=0$. Thus  
\begin{eqnarray*}
 \disp\sum_{T\in {\cal T}_h}\int_T\nabla e_u\cdot\nabla z &=&
\disp\sum_{T\in {\cal T}_h}\int_T\nabla (u-u_h)\cdot\nabla( z-\Pi_0 z)\\
&=&\,\disp\sum_{T\in {\cal T}_h}\bigg\{\int_T (f+\omega^2 u_h+\Delta u_h)(
 z-\Pi_0 z)\,+\,\omega^2\int_T e_u ( z-\Pi_0 z)\\
&&+\int_{\partial
T} \nabla e_u\cdot\bnu_T(  z-\Pi_0 z)\bigg\}\,.  
\end{eqnarray*}
The proof is finished once  Lemma \ref{lemma0-eu} is applied to bound  the
last
term in equation above.
\end{proof}

\begin{proposition}\label{prop:u}
There exists $ C>0$, independent of the meshsize $h$ and the wave number $\omega$, such that
\[
\|e_u\|^2_{0,\Omega}\le  C^2\,\hat\eta^2:= C^2 \sum_{T\in
{\cal T}_h}\hat\eta^2_T  \,, 
\]
where, for each $T\in {\cal T}_h$, we define
\begin{eqnarray*}
\hat\eta^2_T&:=&\,h^2_T\,\|f+\omega^2u_h+\Delta
u_h\|^2_{0,T}+h_T\,\left\|\widehat{\bm\sigma}\cdot\bnu_T-\nabla u_h\cdot\bnu_T
\right\|^2_{0,\partial T\setminus\Gamma_D}\\
&&
\,+\,\|\alpha^{1/2}\jump{u_h}\|^2_{0,\partial T\cap {\cal
E}_I}+\,\|\alpha^{1/2}(g_D-u_h)\|^2_{0,\partial T\cap {\cal E}_D}\,. 
\label{etahat}
\end{eqnarray*}
\end{proposition}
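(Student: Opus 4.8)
The plan is to argue by duality, starting from the auxiliary problem \eqref{Aux-Pro}. Set $\phi:={\cal L}_\omega e_u\in H^{1+\varepsilon}(\Omega)$, so that $-\Delta\phi-\omega^2\phi=e_u$ in $\Omega$, $\phi|_{\Gamma_D}=0$ and $\partial_{\bnu}\phi|_{\Gamma_N}=0$; in particular $\phi\in H^1_{\Gamma_D}(\Omega)$. Two structural facts will be used repeatedly: since $-\Delta\phi=e_u+\omega^2\phi\in L^2(\Omega)$ we have $\nabla\phi\in H(\div;\Omega)$, hence $\nabla\phi\cdot\bnu$ is single-valued across interior edges; and since $\varepsilon>1/2$, $\nabla\phi\in[H^{\varepsilon}(\Omega)]^2$ possesses $L^2$ traces on the skeleton of ${\cal T}_h$. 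Writing $\|e_u\|_{0,\Omega}^2=\int_\Omega e_u(-\Delta\phi-\omega^2\phi)$, I would integrate by parts elementwise,
\[
\int_\Omega e_u(-\Delta\phi)=\sum_{T\in{\cal T}_h}\int_T\nabla e_u\cdot\nabla\phi-\sum_{T\in{\cal T}_h}\int_{\partial T}e_u\,\nabla\phi\cdot\bnu_T,
\]
and then replace the first sum using Lemma \ref{lemma43} with $z=\phi$. The $\omega^2$ volume contributions cancel, leaving
\[
\|e_u\|_{0,\Omega}^2=\sum_T\int_T(f+\omega^2u_h+\Delta u_h)(\phi-\Pi_0\phi)+\sum_T\int_{\partial T\setminus\Gamma_D}(\widehat{\bm\sigma}\cdot\bnu_T-\nabla u_h\cdot\bnu_T)(\phi-\Pi_0\phi)-\sum_T\int_{\partial T}e_u\,\nabla\phi\cdot\bnu_T.
\]

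Next I would rewrite the last edge term. Using $u\in H^1(\Omega)$ (so $\jump{e_u}=-\jump{u_h}$ on interior edges), $u|_{\Gamma_D}=g_D$, the single-valuedness of $\nabla\phi\cdot\bnu$ on ${\cal E}_I$, and $\partial_{\bnu}\phi|_{\Gamma_N}=0$ (so the Neumann edges drop out), one gets
\[
-\sum_T\int_{\partial T}e_u\,\nabla\phi\cdot\bnu_T=\sum_{e\in{\cal E}_I}\int_e\jump{u_h}\cdot\{\nabla\phi\}-\sum_{e\in{\cal E}_D}\int_e(g_D-u_h)\,\nabla\phi\cdot\bnu.
\]
At this point $\|e_u\|_{0,\Omega}^2$ has been expressed as a sum of four groups of terms matching exactly the four contributions to $\hat\eta_T^2$ in the statement.

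The estimation then proceeds term by term. For the two residual (volume and $\partial T\setminus\Gamma_D$) terms I would use Cauchy--Schwarz together with the projection bounds \eqref{eq:bound:Pi0}, $\|\phi-\Pi_0\phi\|_{0,T}\le Ch_T\|\nabla\phi\|_{0,T}$ and $\|\phi-\Pi_0\phi\|_{0,\partial T}\le Ch_T^{1/2}\|\nabla\phi\|_{0,T}$, which bounds them by $C\,\hat\eta\,\|\nabla\phi\|_{0,\Omega}$. For the jump and Dirichlet terms I would insert the weight $\alpha$, e.g. $\int_e\jump{u_h}\cdot\{\nabla\phi\}\le\|\alpha^{1/2}\jump{u_h}\|_{0,e}\,\|\alpha^{-1/2}\{\nabla\phi\}\|_{0,e}$, use $\alpha^{-1}\sim\h_e$ and the \emph{fractional} scaled trace inequality $\h_e\|\nabla\phi\|_{0,e}^2\le C(\|\nabla\phi\|_{0,T}^2+h_T^{2\varepsilon}|\nabla\phi|_{\varepsilon,T}^2)$; summing over the skeleton (bounded variation of ${\cal T}_h$, $h\le1$, and $\sum_T|\nabla\phi|_{\varepsilon,T}^2\le|\nabla\phi|_{\varepsilon,\Omega}^2$) bounds these by $C\,\hat\eta\,\|\phi\|_{1+\varepsilon,\Omega}$. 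Collecting everything yields $\|e_u\|_{0,\Omega}^2\le C\,\hat\eta\,\|\phi\|_{1+\varepsilon,\Omega}$, and the continuity of ${\cal L}_\omega$ from Hypothesis \ref{hypothesis}/\eqref{eq:defLw} gives $\|\phi\|_{1+\varepsilon,\Omega}=\|{\cal L}_\omega e_u\|_{1+\varepsilon,\Omega}\le C\|e_u\|_{0,\Omega}$; dividing by $\|e_u\|_{0,\Omega}$ (the case $e_u\equiv0$ being trivial) finishes the argument.

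The hard part will be the treatment of $\sum_T\int_{\partial T}e_u\,\nabla\phi\cdot\bnu_T$: making rigorous that $\nabla\phi$ has $L^2$ traces on edges with single-valued normal component — precisely where $\varepsilon>1/2$ in Hypothesis \ref{hypothesis} and $\nabla\phi\in H(\div;\Omega)$ are needed — and then controlling those edge traces by $\|\phi\|_{1+\varepsilon,\Omega}$ through a fractional scaled trace inequality rather than the usual $H^2$ one. Everything else is routine bookkeeping with Cauchy--Schwarz, \eqref{eq:bound:Pi0}, and summation over ${\cal T}_h$.
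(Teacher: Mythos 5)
Your proposal is correct and follows essentially the same route as the paper: duality with $\phi={\cal L}_\omega e_u$, elementwise integration by parts combined with Lemma \ref{lemma43} (so the $\omega^2$ volume terms cancel), identification of the four contributions to $\hat\eta_T$, the $\Pi_0$ bounds \eqref{eq:bound:Pi0} for the residual terms, a scaled fractional trace inequality (the paper uses $\|w\|_{0,\partial T}\le C_{\tt s}h_T^{s-1/2}\|w\|_{s,T}$ with $s=\varepsilon$) for the $\alpha$-weighted jump terms, and finally the continuity of ${\cal L}_\omega$ to absorb $\|\phi\|_{1+\varepsilon,\Omega}$ and divide by $\|e_u\|_{0,\Omega}$. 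The only cosmetic difference is that the paper routes the boundary bookkeeping through Lemma \ref{lemma0-eu} rather than manipulating $\sum_T\int_{\partial T}e_u\,\nabla\phi\cdot\bnu_T$ directly.
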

\begin{proof} 
Take $\phi={\cal L}_\omega e_u\in H^1_{\Gamma_D}(\Omega)\cap H^{1+\varepsilon}(\Omega)$. Then, integrating
by parts and making use of Lemma \ref{lemma43} we can obtain
\begin{eqnarray}
\|e_u\|_{0,\Omega}^2&=&\sum_{T\in {\cal T}_h}\int_T 
e_u (-\Delta \phi-\omega^2 \phi) = \disp\sum_{T\in {\cal
T}_h}\bigg\{\int_T\nabla e_u\cdot
\nabla \phi  - \omega^2\int_T\phi\,e_u-\int_{\partial T\setminus
\Gamma_N}e_u\nabla\phi\cdot\bnu_T\bigg\}\nonumber\\
&=&  \sum_{T\in {\cal T}_h}\bigg\{\int_T(f+\omega^2u_h+\Delta
u_h)(\phi-\Pi_0\phi)  +   \int_{\partial
T\setminus\Gamma_D}(\widehat{{\bm\sigma}}\cdot\bnu_T-\nabla u_h \cdot\bnu_T)(\phi-\Pi_0
\phi) \nonumber \\
&&  
\qquad -\, \int_{\partial T\setminus\Gamma}   \nabla\phi \cdot
\jump{u_h}\bigg\}-\, \int_{{\cal E}_D} 
\nabla\phi \cdot \jump{g_D- u_h}\,. \label{eq:01:teou}
\end{eqnarray}
where in the last step we have applied Lemma \ref{lemma0-eu}, with $z=\phi$,
used the relation $-\Delta u -\omega^2 u=f$ in $\Omega$ and  that  
$\jump{\nabla
\phi}=0$,  on any ${e}\in{\cal E}_I\cup{\cal E}_N$ and $\jump{u}=0$, on any
${e}\in{\cal E}_I$. 

Note (cf. \cite{gs-2004}) that
\[
\|w\|_{0,\partial T}\le C_{\tt s} h_T^{s-1/2}\|w\|_{s,T}\qquad\forall\,w\in H^{s}(T)\,,
\]
with  $C_{\tt s}>0$ depending only on $s>1/2$.
Then, using this bound, as well as the definition of $\alpha$, we  can easily check
\[
 \bigg|\int_{\partial T}  \nabla\phi \cdot
\jump{u_h}\bigg|\le 
\|\alpha^{-1/2}\,\nabla\phi\|_{0,\partial T}\|\alpha^{1/2}\,\jump{u}_h\|_{0,\partial T}\le C'\,
\|\phi\|_{1+\varepsilon,T}\|\alpha^{1/2}\,\jump{u}_h\|_{0,\partial T}.
\]
Applying first this inequality and \eqref{eq:bound:Pi0} to the first
two terms in \eqref{eq:01:teou} and next the Cauchy-Schwarz inequality, we get
$$
\|e_u\|_{0,\Omega}^2 \le C\sum_{T\in
{\cal T}_h} \hat\eta_T \|\phi\|_{1+\varepsilon,T} \le
C\left[\sum_{T\in
{\cal T}_h} \hat\eta_T^2\right]^{1/2} \|\phi\|_{1+\varepsilon,\Omega}\le 
C\,||{\mathcal L}_{\omega}||_{L^2(\Omega)\to H^{1+\varepsilon}(\Omega)}\,\left[\sum_{T\in
{\cal T}_h} \hat\eta_T^2\right]^{1/2} \|e_u\|_{0,\Omega}\,,
$$
where we have applied in the last step Hypothesis \ref{hypothesis}. The proof
is now completed. 
\end{proof}

To derive  a a posteriori error estimator for  $\triple{u-u_h} $ we will make
use of  the following Helmholtz decomposition.
\begin{lemma}\label{lemma41}
There exist $\psi\in H^1_{\Gamma_D}(\Omega)$  and $\chi\in H^1(\Omega)$ with
$\curl\chi\cdot\bnu=0$ on $\Gamma_N$, such that 
\[
\nabla_h e_u= \nabla \psi +\curl\chi\,.
\]
Furthermore, there holds
\[
\|\nabla\psi\|^2_{0, \Omega}+\|\curl\chi\|^2_{0, \Omega}=
\|\nabla_h(u-u_h)\|^2_{0, \Omega}\,.
\]
\end{lemma}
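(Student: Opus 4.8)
The plan is to obtain the asserted identity as the classical Helmholtz (Hodge) decomposition of $\bm v:=\nabla_h e_u=\nabla_h(u-u_h)\in[L^2(\Omega)]^2$ adapted to the partition $\Gamma=\bar\Gamma_D\cup\bar\Gamma_N$. First I would construct the gradient part: let $\psi\in H^1_{\Gamma_D}(\Omega)$ be the unique solution of
\[
\int_\Omega\nabla\psi\cdot\nabla\mu=\int_\Omega\bm v\cdot\nabla\mu\qquad\forall\,\mu\in H^1_{\Gamma_D}(\Omega),
\]
which exists and is unique by the Lax--Milgram lemma, the Poincar\'e inequality on $H^1_{\Gamma_D}(\Omega)$ being available because $|\Gamma_D|>0$. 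Put $\bm w:=\bm v-\nabla\psi$. Testing the variational equation with $\mu\in C_c^\infty(\Omega)$ gives $\div\bm w=0$ in the sense of distributions, so $\bm w\in H(\div;\Omega)$ with $\div\bm w=0$; then integrating by parts in $\int_\Omega\bm w\cdot\nabla\mu=0$ for $\mu\in H^1_{\Gamma_D}(\Omega)$ yields $\langle\bm w\cdot\bnu,\mu\rangle_{\partial\Omega}=0$, and since the traces on $\Gamma_N$ of such $\mu$ fill a dense subset of the relevant trace space, we conclude $\bm w\cdot\bnu=0$ on $\Gamma_N$.

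Next I would recover a stream function from $\bm w$. Because $\div\bm w=0$ on the polygon $\Omega$, the rotated field $(-w_2,w_1)$ is irrotational, hence a gradient (here I would assume, as is standard in this setting, that $\Omega$ is simply connected, the general case needing only the usual harmonic corrections), so there is $\chi\in H^1(\Omega)$, unique up to an additive constant, with $\curl\chi=\bm w$ in $\Omega$. By the previous step $\curl\chi\cdot\bnu=\bm w\cdot\bnu=0$ on $\Gamma_N$. This already yields $\nabla_h e_u=\bm v=\nabla\psi+\curl\chi$ with $\psi\in H^1_{\Gamma_D}(\Omega)$ and $\chi\in H^1(\Omega)$ satisfying $\curl\chi\cdot\bnu=0$ on $\Gamma_N$, as required.

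For the norm identity it then suffices to verify the $L^2$-orthogonality $\int_\Omega\nabla\psi\cdot\curl\chi=0$. Since $\curl\chi\in H(\div;\Omega)$ with $\div(\curl\chi)=0$, integration by parts gives $\int_\Omega\nabla\psi\cdot\curl\chi=\langle\psi,\curl\chi\cdot\bnu\rangle_{\partial\Omega}$, which vanishes because $\psi=0$ on $\Gamma_D$ and $\curl\chi\cdot\bnu=0$ on $\Gamma_N$; the Pythagorean identity $\|\nabla_h(u-u_h)\|_{0,\Omega}^2=\|\nabla\psi\|_{0,\Omega}^2+\|\curl\chi\|_{0,\Omega}^2$ follows immediately. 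The main obstacle I anticipate is making the construction of $\chi$ and the final boundary manipulation fully rigorous: since $\Gamma_D$ and $\Gamma_N$ meet at corners, the traces involved live in spaces of type $\widetilde{H}^{1/2}$ and $H^{-1/2}$ over $\Gamma_D$ and $\Gamma_N$, and splitting the duality pairing on $\partial\Omega$ into its $\Gamma_D$ and $\Gamma_N$ contributions has to be done with some care; alternatively, one may simply quote the standard mixed-boundary Helmholtz decomposition of $[L^2(\Omega)]^2$, which packages all of this.
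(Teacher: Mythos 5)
Your construction is precisely the standard mixed-boundary Helmholtz decomposition, which is what the paper's proof invokes by citing Theorem I.3.1 of Girault--Raviart and Lemma 3.1 of \cite{bcg-2004}, so the argument is correct and essentially the same in substance. One small simplification: the orthogonality $\int_\Omega\nabla\psi\cdot\curl\chi=0$ follows immediately by taking $\mu=\psi$ in your variational equation defining $\psi$ (since $\curl\chi=\bm{v}-\nabla\psi$), which sidesteps the delicate splitting of the boundary duality pairing over $\Gamma_D$ and $\Gamma_N$ that you flag at the end.
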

\begin{proof}
It is consequence of Theorem I.3.1 in \cite{gr-1986}. We refer also to Lemma 3.1
in
\cite{bcg-2004} for more details.
\end{proof}

\begin{lemma}\label{lemma44}
Let $\chi\in H^1(\Omega)$ the function from Lemma {\rm \ref{lemma41}}. Then
there exists $c>0$, independent of $h$ and $\omega$, such that
\begin{eqnarray*}
\sum_{T\in {\cal T}_h}\int_T\nabla e_u\cdot\curl\chi\,&\le&\,c\,\|\curl\chi
\|_{0,\Omega}\,|e_u|_h \\
&=&c\,\Big(\|\alpha^{1/2}\jump{u_h}\|_{0,{\cal
E}_I}^2\,+\,\|\alpha^{1/2}(g_D-u_h)\|_{0, {\cal
E}_D}^2\Big)^{1/2}\|\curl\chi\|_{0, \Omega}\quad\forall\,u\in
H^1({\cal T}_h).
\end{eqnarray*}
\end{lemma}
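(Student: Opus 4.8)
The plan is to get around the low Sobolev regularity of $\chi$ — which lies only in $H^1(\Omega)$, so that $\curl\chi$ has merely $H^{-1/2}$ traces on the edges and cannot be tested edge-by-edge against the jumps of $u_h$ — by making \emph{$u_h$}, rather than $\chi$, conforming. Concretely, I would bring in an Oswald-type averaging operator mapping $V_h$ into its conforming subspace $V_h\cap H^1(\Omega)$: let $u_h^c\in V_h\cap H^1(\Omega)$ be the continuous piecewise polynomial obtained by averaging the nodal values of $u_h$, with the nodal values on $\Gamma_D$ prescribed equal to those of $g_D$ (assuming, as is customary, that $g_D$ is edgewise a polynomial of degree $\le m$; otherwise one uses the nodal interpolant of $g_D$ and collects the usual higher order data-oscillation term). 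Such an operator satisfies the classical stability estimate
\[
\|\nabla_h(u_h-u_h^c)\|_{0,\Omega}\;\le\; C\Big(\sum_{e\in{\cal E}_I}h_e^{-1}\|\jump{u_h}\|_{0,e}^2+\sum_{e\in{\cal E}_D}h_e^{-1}\|g_D-u_h\|_{0,e}^2\Big)^{1/2}\;\le\; C\,\widehat{\alpha}^{-1/2}\,|e_u|_h ,
\]
with $C>0$ depending only on the shape-regularity of $\{{\cal T}_h\}$ and the polynomial degree, and where the last inequality uses $\alpha=\widehat{\alpha}/{\tt h}$ together with $\jump{u}=0$ on ${\cal E}_I$ and $u=g_D$ on $\Gamma_D$.

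Then I would split $\nabla_h e_u=\nabla(u-u_h^c)+\nabla_h(u_h^c-u_h)$, noting that $u-u_h^c\in H^1(\Omega)$ while $u_h^c-u_h$ is a (discontinuous) piecewise polynomial, and estimate the two resulting terms separately. Since $\div(\curl\chi)=0$ in $\Omega$, one integration by parts gives
\[
\sum_{T\in{\cal T}_h}\int_T\nabla(u-u_h^c)\cdot\curl\chi \;=\; \langle\,u-u_h^c\,,\,\curl\chi\cdot\bnu\,\rangle_{\partial\Omega};
\]
on $\Gamma_N$ the factor $\curl\chi\cdot\bnu$ vanishes by the property of $\chi$ from Lemma \ref{lemma41}, and on $\Gamma_D$ the factor $u-u_h^c=g_D-g_D$ vanishes by construction, so this term is zero. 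For the remaining term both factors lie in $[L^2(\Omega)]^2$, so Cauchy--Schwarz together with the stability estimate above yields
\[
\Big|\sum_{T\in{\cal T}_h}\int_T\nabla_h(u_h^c-u_h)\cdot\curl\chi\Big|\;\le\;\|\nabla_h(u_h^c-u_h)\|_{0,\Omega}\,\|\curl\chi\|_{0,\Omega}\;\le\; c\,|e_u|_h\,\|\curl\chi\|_{0,\Omega}.
\]
Adding the two contributions gives the asserted inequality, with $c$ independent of $h$ and, trivially, of $\omega$ (which never enters the argument); the equality displayed in the second line of the statement is just the definition \eqref{eq:seminorm} of $|\cdot|_h$ rewritten via $\jump{u}=0$ on ${\cal E}_I$ and $u=g_D$ on $\Gamma_D$.

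The structural argument is short; the delicate points are (i) forcing $u_h^c$ to agree with $g_D$ on $\Gamma_D$, which is exactly what makes the $\|\alpha^{1/2}(g_D-u_h)\|_{0,{\cal E}_D}$-contribution appear in the stability bound (and what produces a higher order data-oscillation term when $g_D$ is not piecewise polynomial), and (ii) the nodal-averaging stability estimate itself, which is classical (Karakashian--Pascal / Houston--Sch\"otzau--Wihler / Oswald type) but whose constants have to be tracked; for (ii) I would simply quote the relevant reference, in the spirit of the appeal to \cite{bcg-2004} made for the companion lemmas. I expect the bookkeeping in (ii) to be the only — and essentially routine — obstacle; conceptually, the key move is the replacement of $u_h$ by the conforming $u_h^c$, which confines all interaction with the poorly behaved $\curl\chi$ either to $\partial\Omega$, where the boundary conditions on $\chi$ and $u_h^c$ annihilate it, or to a plain $L^2(\Omega)$ pairing.
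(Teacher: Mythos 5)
Your argument is correct, but it cannot be matched line-by-line against the paper, because the paper offers no proof at all here: it simply defers to Lemma 4.4 of \cite{bb-2007}. Your route --- replace $u_h$ by an Oswald-averaged conforming $u_h^c\in V_h\cap H^1(\Omega)$ matching $g_D$ on $\Gamma_D$, kill the conforming contribution $\int_\Omega\nabla(u-u_h^c)\cdot\curl\chi$ through the orthogonality of $\nabla H^1_{\Gamma_D}(\Omega)$ against curls with vanishing normal trace on $\Gamma_N$ (the same orthogonality underlying the Helmholtz decomposition of Lemma \ref{lemma41}), and control $\|\nabla_h(u_h-u_h^c)\|_{0,\Omega}$ by the Karakashian--Pascal estimate --- is the standard modern proof of exactly this kind of statement and is self-contained, which is arguably preferable to the bare citation. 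Two caveats are worth recording. First, your bound is literally constant-free only when $g_D$ is edgewise a polynomial of degree at most $m$; otherwise the nodal interpolation of $g_D$ injects a data-oscillation term, whereas the lemma (and the reliability estimate \eqref{eq:01:teo-apost} built on it) is stated without any h.o.t., so strictly speaking either that assumption must be made explicit or the oscillation term must be carried along. Second, the meshes of this paper may contain hanging nodes, and the Oswald averaging operator and its stability estimate require the usual $1$-irregularity hypothesis in that setting; this should be flagged (the paper itself only restricts to meshes without hanging nodes in the efficiency subsection). The remaining technical point --- splitting the $H^{-1/2}(\partial\Omega)\times H^{1/2}(\partial\Omega)$ duality into its $\Gamma_D$ and $\Gamma_N$ parts when $\curl\chi\cdot\bnu$ is only a distributional normal trace --- is the same subtlety already implicit in Lemma \ref{lemma41} and is handled at the same level of rigor as the rest of the paper, so I would not count it against you.
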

\begin{proof}
See Lemma 4.4 in \cite{bb-2007}.
\end{proof}

We are ready to derive the a posteriori estimate for $\triple{u-u_h} $ and
$\|\bm{\sigma}-\bm{\sigma}_h\|_{0,\Omega}$.

\noindent{\bf \em Proof of \eqref{eq:01:teo-apost} of Theorem \ref{theo:apost}.}
Notice that since
\[
 \|\bm{\sigma}-\bm{\sigma}_h\|_{0,\Omega}\le C\triple{u-u_h}\,,
\]
(see \eqref{eq:sigma-nablau}), it suffices to bound $\triple{u-u_h}$.

We then proceed as in Theorem 3.2 in \cite{bcg-2004}  (see also \cite{bb-2012}).
Since
$\jump{u}=0$ in ${\cal E}_I$ and $g_D$ in ${\cal E}_D$, we deduce 
\begin{equation}\label{eq:00:theo:gradu}
\triple{u-u_h}^2=\triple{e_u}^2\,=\,\|\nabla_h e_u \|^2_{0,
\Omega}+\|\alpha^{1/2}\jump{u_h}\|^2_{0,{\cal E}_I}\,+\, 
\|\alpha^{1/2}(g_D-u_h)\|^2_{0, {\cal E}_D}\,.
\end{equation}
Besides,
\begin{equation}
 \label{eq:01:theo:gradu}
 \|\nabla_h e_u \|^2_{0,
\Omega}= \sum_{T\in{\cal T}_h}\left\{\int_T\nabla e_u\cdot\nabla\psi\, +\,
\int_T\nabla e_u\cdot\curl\chi\right\}\,.
\end{equation}

The first term can be bounded as follows:   
\begin{eqnarray*}
\sum_{T\in{\cal T}_h}\int_T\nabla
e_u\cdot\nabla\psi &=&  \sum_{T\in{\cal T}_h}\bigg\{\int_T
(f+\omega^2 u_h+\Delta
u_h)(\psi-\Pi_0\psi)+\int_{\partial T\setminus \Gamma_D}
\hspace{-4pt}(\widehat{\bm\sigma}\cdot\bnu_T-\nabla
u_h\cdot\bnu_T)  (\psi
-\Pi_0\psi) \bigg\}\\
&& \, +\omega^2
\|e_u\|_{0,\Omega}\|\psi\|_{0,\Omega}\\
&&\hspace{-48pt}\le\: C \bigg( \bigg[\sum_{T\in{\cal T}_h}\Big\{ \Big(h_T
\|f+\omega^2 u_h+\Delta
u_h\|_{0,T}+h_T^{1/2 }\|(\widehat{\bm\sigma}\cdot\bnu_T-\nabla
u_h\cdot\bnu_T)\|_{0,\partial T}\Big)\|\nabla \psi\|_{0,T}\Big\}\bigg]^2  \\
&&  +C^2_{\rm
P}\omega^4
\|e_u\|_{0,\Omega}^2\|\nabla \psi\|^2_{0,\Omega}\bigg)^{1/2}\\
& &\hspace{-48pt}\le\: C'(1+\omega^2) \Bigg[  \sum_{T\in{\cal T}_h}\eta_T^2
\Bigg]^{1/2}\|\nabla \psi\|_{0,\Omega}\,,
\end{eqnarray*}
where we have applied sequentially Lemmas \ref{lemma43} and \ref{lemma41},
estimate \eqref{eq:bound:Pi0}, Poincar\'e inequality \eqref{eq:poincare}, the
Cauchy-Schwarz inequality and, finally, to bound the
term $\|e_u\|_{0,\Omega}$, Proposition \ref{prop:u}.

Using this result,  and Lemma \ref{lemma44} in
\eqref{eq:01:theo:gradu} we derive
\begin{eqnarray}
\|\nabla_h e_u\|_{0,\Omega}^2&\le&  C(1+\omega^2)\ \left[\sum_{T\in
{\cal T}_h} \eta^2_T \right]^{1/2}\big(\|\nabla
\psi\|^2_{0,\Omega}+\|\curl\chi\|^2_{0, \Omega}\big)^{1/2}\nonumber \\
&=&  C (1+\omega^2) \Bigg[\sum_{T\in
{\cal T}_h} \eta^2_T \Bigg]^{1/2}\|\nabla_h
e_u\|_{0,\Omega}\label{eq:02:theo:gradu}.
\end{eqnarray}
Inserting \eqref{eq:02:theo:gradu} in \eqref{eq:00:theo:gradu}, the result is
proven. \hfill $\Box$.

\subsection{Quasi-efficiency of the estimator}
In this subsection we prove the quasi-efficiency of the estimator (cf.
$(\ref{eq:03:teo-apost})$).
For simplicity, we assume that each element of $\{{\cal T}_h\}_{h>0}$ has not
hanging node. 

We begin with some notations and preliminary results. For each
$T\in {\cal T}_h$ and $e$ an edge of $T$, we will 
denote in this section only by $\mathbb{P}_m(T)$ and
$\mathbb{P}_m(e)$ the spaces of polynomials on $T$ and $e$ respectively of
degree $m$. On the other hand, we
denote by $\psi_T$ and $\psi_e$ the standard
triangle-bubble Êand edge-bubble functions, respectively. In particular, $\psi_T$
satisfies 
\[
\psi_T\in\mathbb{P}_3(T),\quad \mathrm{supp}(\psi_T)\subseteq T,\quad
\text{with }0\leq \psi_T\leq 1 \quad\text{and }\quad \psi_T|_{\partial T}=0\,.
\]
Similarly,
\[
\psi_e|_T\in\mathbb{P}_2(T),\quad
\mathrm{supp}(\psi_e)\subseteq \cup\{T'\in{\cal T}_h\ :\
e\subset\partial T'\},\quad \text{with $0\leq\psi_e\leq1$ and $
\psi_e|_{\partial T\setminus e}=0$.}
\]
We also recall from \cite{v-1996} that, given $k\in\N\cup\{0\}$, there exists an
extension operator $L:C(e)\to C(T)$ that satisfies 
\[
Lp\in\mathbb{P}_m(T),\qquad  Lp\big|_e=p, \quad\forall\,p\in\mathbb{P}_m(e).
\]
Additional properties of $\psi_T$, $\psi_e$, and $L$ are listed here cf.
\cite[Lemma 1.3]{v-1996}: There exist $c_1,c_2,c_3,c_4>0$, independent of the mesh size, so that
\begin{eqnarray}
  \|\psi_T\,q\|_{0,T}^2 &\le& \|q\|_{0,T}^2 \qquad\quad \leq
c_1\,\|\psi_T^{1/2}\,q\|_{0,T}^2\qquad \forall\,q\in\mathbb{P}_m(T)\,, \label{prop1} \\
  \|\psi_e\,p\|_{0,e}^2 &\le& \|p\|_{0,e}^2  \qquad\quad \le          
c_2\,\|\psi_e^{1/2}\,p\|_{0,e}^2 \qquad \forall\,p\in\mathbb{P}_m(e)\,, \label{prop2} \\
  c_4\,h_e\,\|p\|_{0,e}^2 & \le & \|\psi_e^{1/2}\,Lp\|_{0,T}^2 \le
c_3\,h_e\,\|p\|_{0,e}^2 \qquad\quad\forall\,p\in\mathbb{P}_m(e)\,. \label{prop3}
\end{eqnarray}
 

Our aim now is to estimate these five terms which define the error indicator $\eta_T^2$
cf. \eqref{eq:02:teo-apost}.  Observe 
that we can bound three of them straightforwardly, namely
\begin{eqnarray}\label{est1}
\|\nabla u_h-\bm{\sigma}_h\|_{0,T}& \leq& \|\bm{\sigma}-\bm{\sigma}_h\|_{0,T} +
\|\nabla e_u\|_{0,T}\qquad\forall\,T\in{\mathcal T}_h \\
\label{est2}
\|\alpha^{1/2}\jump{u_h}\|_{0,e} &=&
\|\alpha^{1/2}\jump{e_u}\|_{0,e}\qquad\qquad\qquad\quad\forall\,e\in{\cal E}_I\,,\\
\label{est3}
\|\alpha^{1/2}(g_D-u_h)\|_{0,e} &=&
\|\alpha^{1/2}(u-u_h)\|_{0,e}\qquad\qquad\quad\forall\,e\in{\cal E}_D\,,
\end{eqnarray}
where as usual $e_u=u-u_h$.

From  here on, we introduce $f_h=\Pi_{V_h} f$ so that $\|f-f_h\|_{0,T}$ goes to
zero with, at least, the same rate as $\triple{e_u}$ as the mesh is refined.

\begin{lemma}\label{lemma:est4}
There exists $C>0$, independent of the mesh size and  $\omega$, such that for any $T\in{\cal
T}_h$
\begin{equation}\label{est4}
\begin{array}{c}
\ds h_T^2\|f+\omega^2 u_h+\Delta u_h\|_{0,T}^2 \leq \ds C\left(\|\nabla
e_u \|_{0,T}^2+\omega^4h^2_T\|e_u\|^2_{0,T}+h_T^2\|f-f_h\|_{0,T}^2\right)\,.
\end{array}
\end{equation}
\end{lemma}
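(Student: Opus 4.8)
The plan is to bound $h_T^2\|f+\omega^2 u_h + \Delta u_h\|_{0,T}^2$ by testing the residual against a bubble-weighted version of itself and then integrating by parts locally on $T$. First I would introduce the discrete residual $r_h := f_h + \omega^2 u_h + \Delta u_h \in \mathbb{P}_{m}(T)$ (here $\Delta u_h\in \mathbb{P}_{m-2}(T)$ and $f_h=\Pi_{V_h}f\in\mathbb{P}_m(T)$, so $r_h$ is a polynomial), and write the true residual as $f+\omega^2 u_h + \Delta u_h = r_h + (f-f_h)$; by the triangle inequality it suffices to bound $h_T^2\|r_h\|_{0,T}^2$, the contribution of $f-f_h$ being exactly the last term on the right-hand side of \eqref{est4}. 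Then I would use property \eqref{prop1} of the triangle-bubble function to get $\|r_h\|_{0,T}^2 \le c_1 \|\psi_T^{1/2} r_h\|_{0,T}^2 = c_1 \int_T \psi_T r_h\, r_h$, so the task reduces to estimating $\int_T \psi_T r_h\, r_h$.

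Next, the key step: replace one copy of $r_h$ inside $\int_T \psi_T r_h\, r_h$ using the relation it satisfies against the exact solution. Recalling that $-\Delta u - \omega^2 u = f$ in $\Omega$, we have $f + \omega^2 u_h + \Delta u_h = \Delta(u_h - u) + \omega^2(u_h - u) = \Delta e_u \cdot(-1)\cdots$ — more precisely $f+\omega^2 u_h+\Delta u_h = -\Delta e_u - \omega^2 e_u$ where $e_u = u - u_h$. Hence
\[
\int_T \psi_T r_h\, r_h = \int_T \psi_T r_h (f-f_h) + \int_T \psi_T r_h(-\Delta e_u - \omega^2 e_u).
\]
For the $-\Delta e_u$ term I would integrate by parts on $T$; since $\psi_T r_h$ vanishes on $\partial T$ (because $\psi_T|_{\partial T}=0$), there is no boundary contribution and $-\int_T \psi_T r_h \Delta e_u = \int_T \nabla(\psi_T r_h)\cdot\nabla e_u$. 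Then I would bound $\|\nabla(\psi_T r_h)\|_{0,T}$ by an inverse inequality: $\|\nabla(\psi_T r_h)\|_{0,T} \le C h_T^{-1}\|\psi_T r_h\|_{0,T}\le C h_T^{-1}\|r_h\|_{0,T}$ (using $0\le\psi_T\le 1$). The $\omega^2 e_u$ and $f-f_h$ terms are handled by plain Cauchy–Schwarz, again with $\|\psi_T r_h\|_{0,T}\le\|r_h\|_{0,T}$. Collecting,
\[
\|r_h\|_{0,T}^2 \le C\Big( h_T^{-1}\|\nabla e_u\|_{0,T} + \omega^2\|e_u\|_{0,T} + \|f-f_h\|_{0,T}\Big)\|r_h\|_{0,T},
\]
so that $h_T\|r_h\|_{0,T} \le C(\|\nabla e_u\|_{0,T} + \omega^2 h_T\|e_u\|_{0,T} + h_T\|f-f_h\|_{0,T})$; squaring and using $2ab\le a^2+b^2$ together with the triangle inequality for $f-f_h$ yields \eqref{est4} with the stated $\omega$-explicit dependence ($\omega^4 h_T^2\|e_u\|_{0,T}^2$).

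The only mild subtlety — and the step I would check most carefully — is the inverse estimate $\|\nabla(\psi_T r_h)\|_{0,T}\le C h_T^{-1}\|r_h\|_{0,T}$: $\psi_T r_h$ is a polynomial on $T$ of degree $m+3$, so a standard scaling/shape-regularity argument applies, and the constant depends only on $m$ and the shape-regularity of $\{\mathcal{T}_h\}$, not on $h$ or $\omega$; this is the kind of estimate that is routinely invoked in Verf\"urth-style efficiency proofs (cf. \cite{v-1996}). Everything else is bookkeeping with the bubble-function properties \eqref{prop1} and Cauchy–Schwarz, and the $\omega$-dependence is tracked simply by carrying the factor $\omega^2$ through unchanged. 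No hanging-node complications arise here since this estimate is purely local to a single element $T$.
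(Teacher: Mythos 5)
Your proposal is correct and follows essentially the same route as the paper's proof: the same residual splitting via $f_h=\Pi_{V_h}f$, the bubble-function estimate \eqref{prop1}, the identity $f+\omega^2u_h+\Delta u_h=-\Delta e_u-\omega^2 e_u$, integration by parts with no boundary term, and the inverse inequality $\|\nabla(\psi_T r_h)\|_{0,T}\le Ch_T^{-1}\|r_h\|_{0,T}$. The only (immaterial) discrepancy is a sign on the $f-f_h$ term in the intermediate identity, which disappears under Cauchy--Schwarz.
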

\begin{proof}
Let $v_h:=f_h+\omega^2u_h+\Delta u_h$, and $v_b:=\psi_T\,v_h$. Then 
\begin{eqnarray*}
c_1^{-1}\,\|v_h\|_{0,T}^2 &\le& \|\psi_T^{1/2}v_h\|_{0,T}^2
=\int_T(v_h+f)v_b-\int_T fv_b =-\int_T\Delta
e_u\:v_b -\omega^2\int_T e_u v_b+\int_T(f_h-f) v_b\\
 &=& \int_T \nabla e_u \cdot\nabla v_b - \omega^2\int_T e_u v_b+
\int_T(f_h-f)v_b\,.
\end{eqnarray*}
Noting that $v_b|_{\partial T}=0$, the inverse inequality 
\begin{equation}\label{eq:inv}
\|\nabla v_b\|_{0,T}\le C h_{T}^{-1}\|v_b\|_{0,T}\le C h_T^{-1}||v_h||_{0,T}^2
\end{equation}
yields now
\begin{eqnarray*}
 \|v_h\|_{0,T}^2&\le& C\big(
 \omega^2\|e_u\|_{0,T}+h_T^{-1}\|\nabla e_u\|_{0,T}+\|f-f_h\|_{0,T}
\big)\|v_h\|_{0,T}. 
\end{eqnarray*}
The proof is finished by noting now 
\[
 \|f+\omega^2 u_h+\Delta u\|_{0,T}^2 \le  2\|f-f_h\|^2_{0, T}+
2\|v_h\|_{0,T}^2\le C\big[
 \omega^4\|e_u\|_{0,T}^2+h_T^{-2}\|\nabla e_u\|_{0,T}^2+\|f-f_h\|^2_{0,T}
\big].
\]
\end{proof}

\begin{lemma}\label{lemma:est6}
Let $e\in{\cal E}_I$, and let $T',T\in{\cal T}_h$ so that $T\cap T'=e$. Then,
there exists $C>0$, independent of the mesh size and  $\omega$, such that 
\begin{eqnarray*}
\ds h_e\,\|\widehat{\bm{\sigma}}\cdot\bnu_T-\nabla u_h\cdot\bnu_T\|_{0,e}^2
\ds &\leq& C_3\Big(\|\alpha^{1/2}\jump{u-u_h}\|_{0,e}^2
+ \|\nabla u-\nabla_hu_h\|_{0,\mathcal{N}(T)}^2+\|\bm{\sigma}-\bm{\sigma}_h\|_{0,\mathcal{N}(T)}^2\\
&& \,+\, 
\omega^4\max\{h_T,h_{T'}\}^2\|u-u_h\|_{0,\mathcal{N}(T)}^2 + \max\{h_T,h_{T'}\}^2\|f-f_h\|_{0,\mathcal{N}(T)}^2\Big)\,.
\end{eqnarray*}
\end{lemma}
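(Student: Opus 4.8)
The plan is to split, on the interior edge $e=T\cap T'$, the local quantity $J:=(\widehat{\bm\sigma}-\nabla u_h)\cdot\bnu_T$ according to the definition \eqref{ldg2} of the numerical flux. Using that $\{\bm\sigma_h\}\cdot\bnu_T=\bm\sigma_h|_T\cdot\bnu_T-\tfrac12\jump{\bm\sigma_h}$ on $e$ (where $\jump{\bm\sigma_h}$ is the scalar normal jump across $e$) and $\jump{u_h}\cdot\bnu_T=u_h|_T-u_h|_{T'}$, one gets
\[
J=(\bm\sigma_h-\nabla u_h)|_T\cdot\bnu_T-\Big(\tfrac12+\bbe\cdot\bnu_T\Big)\jump{\bm\sigma_h}-\alpha\,\jump{u_h}\cdot\bnu_T\qquad\text{on }e .
\]
Since $\bbe$ is uniformly bounded, it suffices to estimate $h_e\|(\bm\sigma_h-\nabla u_h)|_T\cdot\bnu_T\|^2_{0,e}$, $h_e\|\jump{\bm\sigma_h}\|^2_{0,e}$ and $h_e\alpha^2\|\jump{u_h}\|^2_{0,e}$. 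The first is disposed of by a discrete trace (inverse) inequality applied to the polynomial $(\bm\sigma_h-\nabla u_h)|_T$, giving $h_e\|(\bm\sigma_h-\nabla u_h)|_T\cdot\bnu_T\|^2_{0,e}\le C\|\bm\sigma_h-\nabla u_h\|^2_{0,T}\le C'(\|\bm\sigma-\bm\sigma_h\|^2_{0,T}+\|\nabla e_u\|^2_{0,T})$ by \eqref{est1} and $\bm\sigma=\nabla u$. The third is immediate too: $\alpha$ is constant on $e$, $h_e\approx\h_e=\max\{h_T,h_{T'}\}$ by shape-regularity and bounded variation, and $\jump{u}=0$, so $h_e\alpha^2\|\jump{u_h}\|^2_{0,e}\le\widehat\alpha\,\|\alpha^{1/2}\jump{u_h}\|^2_{0,e}=\widehat\alpha\,\|\alpha^{1/2}\jump{u-u_h}\|^2_{0,e}$.

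The heart of the matter is the bound of $h_e\|\jump{\bm\sigma_h}\|^2_{0,e}$, which I would carry out by the classical bubble-function technique (cf. \cite{bb-2007,bcg-2004}). Set $J_\sigma:=\jump{\bm\sigma_h}\in\mathbb{P}_{m'}(e)$ and $v_e:=\psi_e\,L(J_\sigma)$; this is a polynomial of fixed degree, supported in $\bar T\cup\bar T'$ and vanishing on $\partial(T\cup T')$, hence $v_e\in H^1_0(\Omega)$ — and, crucially, it need not belong to $V_h$, because it will be tested against the continuous problem rather than the discrete one. By \eqref{prop2}, $\|J_\sigma\|^2_{0,e}\le c_2\int_e\psi_eJ_\sigma^2=c_2\int_eJ_\sigma v_e$. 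Integrating by parts on $T$ and on $T'$ and using that $v_e$ vanishes on $\partial T\setminus e$ and $\partial T'\setminus e$ gives $\int_eJ_\sigma v_e=\sum_{T''\in\{T,T'\}}\big(\int_{T''}\div\bm\sigma_h\,v_e+\int_{T''}\bm\sigma_h\cdot\nabla v_e\big)$; subtracting the identity $\int_{T\cup T'}\div\bm\sigma\,v_e+\int_{T\cup T'}\bm\sigma\cdot\nabla v_e=0$ (valid since $\bm\sigma=\nabla u\in H(\div;\Omega)$ and $v_e\in H^1_0(\Omega)$) and using $-\div\bm\sigma=f+\omega^2u$ yields
\[
\int_eJ_\sigma v_e=\sum_{T''\in\{T,T'\}}\Big(\int_{T''}(\div\bm\sigma_h+f+\omega^2u)\,v_e+\int_{T''}(\bm\sigma_h-\bm\sigma)\cdot\nabla v_e\Big).
\]

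To close the estimate I would use $\|v_e\|_{0,T''}\le(c_3h_e)^{1/2}\|J_\sigma\|_{0,e}$ from \eqref{prop3} and $\|\nabla v_e\|_{0,T''}\le Ch_{T''}^{-1}\|v_e\|_{0,T''}\le Ch_e^{-1/2}\|J_\sigma\|_{0,e}$ (inverse inequality, $h_{T''}\approx h_e$), and then control the volume residual by decomposing $\div\bm\sigma_h+f+\omega^2u=\div(\bm\sigma_h-\nabla u_h)+(\Delta u_h+f+\omega^2u_h)+\omega^2e_u$: the first summand is bounded by an inverse inequality and \eqref{est1}, $\|\div(\bm\sigma_h-\nabla u_h)\|_{0,T''}\le Ch_{T''}^{-1}(\|\bm\sigma-\bm\sigma_h\|_{0,T''}+\|\nabla e_u\|_{0,T''})$, and the second by Lemma \ref{lemma:est4}. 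Substituting, applying Cauchy--Schwarz, cancelling one factor $\|J_\sigma\|_{0,e}$, squaring, multiplying by $h_e$ and replacing $h_{T''}$ by $\max\{h_T,h_{T'}\}$, one obtains $h_e\|\jump{\bm\sigma_h}\|^2_{0,e}$ bounded by $C$ times the sum over $T''\in\{T,T'\}$ of $\|\bm\sigma-\bm\sigma_h\|^2_{0,T''}+\|\nabla u-\nabla_hu_h\|^2_{0,T''}+\omega^4\max\{h_T,h_{T'}\}^2\|u-u_h\|^2_{0,T''}+\max\{h_T,h_{T'}\}^2\|f-f_h\|^2_{0,T''}$; since $\{T,T'\}\subset\mathcal{N}(T)$, combining with the two elementary bounds above completes the proof. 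I expect the only genuinely non-routine point to be the passage from $\Delta u_h$ to $\div\bm\sigma_h$: because the discrete scheme is written in terms of $\bm\sigma_h$ and the flux $\widehat{\bm\sigma}$, the natural volume residual arising here is $\div\bm\sigma_h+f+\omega^2u_h$ rather than $\Delta u_h+f+\omega^2u_h$, and the two are reconciled through $\div(\bm\sigma_h-\nabla u_h)$, which an inverse inequality absorbs into the already-controlled quantity \eqref{est1}; tracking the $\omega$- and $(f-f_h)$-contributions along the way is then mere bookkeeping.
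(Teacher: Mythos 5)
Your proof is correct and follows essentially the same route as the paper: the same decomposition of $\widehat{\bm{\sigma}}\cdot\bnu_T-\nabla u_h\cdot\bnu_T$ into the three contributions, the same edge-bubble/integration-by-parts argument against the exact relation $-\div\bm{\sigma}=f+\omega^2u$ to control $\jump{\bm{\sigma}_h}$, and the same appeal to Lemma \ref{lemma:est4} and the inverse inequality \eqref{eq:02:est6} for the volume residual. The only immaterial difference is that you bound $\|(\bm{\sigma}_h-\nabla u_h)\cdot\bnu_T\|_{0,e}$ by a direct discrete trace inequality for polynomials, whereas the paper runs the bubble-function argument for that term as well; both yield the same estimate.
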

\begin{proof}
It can be easily checked that
\begin{equation}\label{eq:01:est6}
\|\widehat{\bm\sigma}\cdot\bnu_T-\nabla u_h\cdot\bnu_T\|_{0,e} \,\leq\,C\,
\big\{\|\jump{\bm{\sigma}_h}\|_{0,e}+\|\alpha^{1/2}\jump{u_h}\|_{0,e}
+ \|\bm { \sigma}_h\cdot\bnu_T-\nabla u_h\cdot\bnu_T\|_{0,e}\big\}\,. 
\end{equation}
Clearly, it only remains to bound  the first and  third  term in the inequality
above.
For the third term, we denote for the sake of a simpler notation 
\[
\lambda_h:=\bm{
\sigma}_h\cdot\bnu_T-\nabla u_h\cdot\bnu_T.
\]
Then, using the property \eqref{prop2} and integrating by parts, we have
\begin{eqnarray*}
 c_2^{-1}\,\|\lambda_h \|_{0,e}^2 &\le& 
\|\psi_e^{1/2}\lambda_h\|_{0,e}^2= \int_T\div(\bm{\sigma}_h-\nabla u_h)
(L \lambda_h)\,\psi_e  +
\int_T(\bm{\sigma}_h-\nabla u_h)\cdot\nabla\big(\psi_eL \lambda_h\,\big)\,. 
\end{eqnarray*}
The Cauchy-Schwarz inequality,  inverse inequality \eqref{eq:inv} and
\eqref{prop3}  yield  
\begin{eqnarray}
\|\lambda_h\|_{0,e}^2 &\le&\,C\,\big\{h_T^{1/2}\|\div(\bm{\sigma}_h-\nabla
u_h)\|_{0,T}\,+\,h_T^{-1/2} \|\bm{\sigma}_h-\nabla u_h\|_{0,T}
\big\}\,\|\lambda_h\|_{0,e}\nonumber\\
&\le&\,C\,(C_{\tt ineq}+1)\,  h_T^{-1/2}\|\bm{\sigma}_h-\nabla
u_h\|_{0,T} \|\lambda_h\|_{0,e}\,,\nonumber
\end{eqnarray}
where in the last step we have also used  cf. \cite[Corollary 1]{lm-2009}
\begin{equation}
\label{eq:02:est6}
\|{\rm div}({\bm \tau})\|_{0,T}\le  C_{\tt ineq} \,h_T^{-1}\|\bm{\tau}\|_{0,T},\quad \forall\,T\in{T}_h\quad\forall\,
\bm{\tau}\in \bm{\Sigma}_h\,,
\end{equation}
with $C_{\tt ineq}>0$ is independent of the mesh size. Regarding the first term $\|\jump{\bm{\sigma}_h}\|_{0,e}$ in
\eqref{eq:01:est6} and denoting $w_h:=\jump{\bm{\sigma}_h}\in \mathbb{P}_m(e)$, we
deduce
\begin{eqnarray*}
c_2^{-1}\|w_h\|_{L^2(e)}^2&\leq& \|\psi_e^{1/2}w_h\|_{L^2(e)}^2\,=\,
\int_e\psi_e Lw_h\,\jump{\bm{\sigma}_h-\bm{\sigma}}
\ds\\
&=& \int_{T\cup T'}\div(\bm{\sigma}_h-\bm{\sigma})\psi_e Lw_h\,
\,+\,\int_{T\cup T'}(\bm{\sigma}_h-\bm{\sigma}
)\cdot\nabla\big(\psi_e Lw_h\,\big)\\
&=& \int_{T\cup T'}\div(\bm{\sigma}_h-\nabla_h u_h)\psi_e Lw_h\,+
\int_{T\cup T'}(\Delta_h u_h+\omega^2 u_h+f)\psi_e Lw_h\,\\
&&+\,
\omega^2\int_{T\cup T'}(u-u_h)\psi_e Lw_h\,
+ \int_{T\cup T'}(\bm{\sigma}_h-\bm{\sigma}
)\cdot\nabla\big(\psi_e Lw_h\,\big)\,. 
\end{eqnarray*}
Next, we bound each of the four integrals per element, applying Cauchy-Schwarz and some properties such as inverse inequality, the ones given in \eqref{prop1}-\eqref{prop3}, and/or \eqref{eq:02:est6}. For example, for the element $T$, we derive
$$
\int_T\div(\bm{\sigma}_h-\nabla u_h)\psi_e Lw_h\,\leq\,c\,\frac{h_e^{1/2}}{h_T}\,||\bm{\sigma}_h-\nabla u_h||_{0,T} \,||w_h||_{0,e}\,,
$$
$$
\int_T(\Delta u_h+\omega^2 u_h+f)\psi_e Lw_h \,\leq\,c\, h_e^{1/2}||\Delta u_h+\omega^2 u_h+f||_{0,T}\,||w_h||_{0,e}\,,
$$
$$
\int_T(u-u_h)\psi_e Lw_h \,\leq\,c\, h_e^{1/2}||u-u_h||_{0,T}\,||w_h||_{0,e}\,,
$$
and
$$
\begin{array}{rcl}
\displaystyle \int_T(\bm{\sigma}_h-\bm{\sigma}
)\cdot\nabla\big(\psi_e Lw_h\,\big) &\leq& \displaystyle c\,\frac{h_e^{1/2}}{h_T}\Big(||\bm{\sigma}_h-\nabla u_h||_{0,T}\,+\,||\nabla u-\nabla u_h||_{0,T} \\
&& \displaystyle \,+\,\omega^2\,h_T||u-u_h||_{0,T} \,+\,h_T||f-f_h||_{0,T}\Big)||w_h||_{0,e}\,.
\end{array}
$$

The result is obtained after we summing up the corresponding estimates for $T$ and $T'$, and applying Lemma \ref{lemma:est4}, to bound $
\|\Delta u_h+\omega^2 u_h+f\|_{0,T}$. We omit further details.

\end{proof}

%
\section{Numerical examples}\label{section5}
In this Section we show the performance of the method with 
the  $\mathbb P_1-[\mathbb P_1]^2$
 approximation. The code has been written in {\sc Matlab} and run in a {\it Pentium Xeon computer with dual processor}. In what follows,
$N$ stands for the total number of degrees of freedom (unknowns) of
$(\ref{LDG-form1})$. Hereafter, the individual and total errors are denoted  as
follows
\[
\bm{e}_h(u):=\triple{u-u_h}, \quad
\bm{e}(\bm{\sigma}):=\|\bm{\sigma}-\bm{\sigma}_h\|_{0,\Omega}, 
\]
\[
\bm{e}_0(u):=\|u-u_h\|_{0,\Omega}, \quad
\bm{e}:=\Big(\bm{e}_h(u)^2+\bm{e}_0(\bm{\sigma})^2\Big)^{1/2}\,,
\]
where $(\bm{\sigma},u)\in[L^2(\Omega)]^2\times H^1(\Omega)$ and
$(\bm{\sigma}_h,u_h)\in \bm{\Sigma}_h\times V_h$ are the unique solutions of the
continuous and discrete formulations, \eqref{eq2} and \eqref{LDG-form1}, respectively. In addition, if $\e$ and
$\tilde\e$ stand for the errors at two consecutive triangulations with $N$ and
$\tilde N$ degrees of freedom, respectively, then the experimental rate of
convergence is given by $\disp r:=-2\frac{\log(\e/\tilde\e)}{\log(N/\tilde N)}$.
The definitions of $r_h(u)$, $r_0(\bm{\sigma})$, and $r_0(u)$ are given in
analogous way.  Finally, by $\e/\eta$ we measure the effectivity index.

We now specify the data of the three examples to be presented here. 
We take $\Omega$ as either the square $]0,1[^2 $ (for Example 1) or the
L-shaped domains $ ]-1,1[^2\,\,\backslash\,\,[0,1]^2$ for Example 2 and  $
]-1,1[^2\,\,\backslash\,\,[0,1]\times[-1,0]$ for Example 3. 
For  Example 2  we define
$\overline{\Gamma}_D:=\{-1\}\times[-1,1]\cup\{1\}\times[-1,0]\cup\{0\}\times[0,1
]$, and we consider $\overline{\Gamma}_D:=\{0\}\times[-1,0]\cup[0,1]\times\{0\}$
for Example 3. In all these examples, the data $f$, $g_D$ and/or $g_N$ are
chosen so that the exact solution $u$ is the one shown in Table 5.1.  We
emphasize that the solution $u$ of Example 1 is smooth, while the one of
Example 3 (given in polar coordinates) lives in $H^{1+2/3}(\Omega)$, since their derivatives are singular at $(0,0)$. 
This implies that $\div(\bm{\sigma})\in H^{2/3}(\Omega)$ only, which, according
to  Theorem $\ref{theo:mainLDG}$, yields $2/3$ as the expected rate of
convergence for the uniform refinement.  

\begin{center}

{\bf Table 5.1}. Summary of data for the three examples.\\

\begin{tabular}{|c|c|c|c|c|}
\hline

{\sc Example}  & {\sc Domain $\Omega$}  &  {\sc B.C.} & $\omega$  &{\sc Solution $u$}   \\
\hline
&    & & 1.0    &  \\[0.3ex] 
  &   &    & 4.0    & \\[0.3ex]
  &   &   & 4.44    &  \\[0.3ex]
1   & {\sc Square} &   {\sc Dirichlet}   & 4.5  & $\sin(\pi x_1)\sin(\pi x_2)$ \\[0.3ex]
  &    &  & 5.0    &  \\  [0.3ex]
 & &  & 10.0 & \\[0.3ex]
 & & & 15.0 & \\ 
\hline
& & & 1.0 &  \\[0.3ex] 
 2  & {\sc L-shaped}  &   {\sc Mixed}  & 10.0 &  $\displaystyle\frac{1}{1.1-x_1}$\\
  &  & & 15.0 & \\
\hline

&    & &    1.0       &   \\[0.3ex] 
3   & {\sc L-shaped}  & {\sc Mixed}   &     10.0  & $\displaystyle r^{2/3}\sin\left(\frac{2}{3}\theta\right)$  \\[0.3ex]
  &  &   &    15.0  &  \\ 

\hline
\end{tabular}

\end{center}

The aim of  the numerical experiments for Example 1 is to show the robustness of the  scheme for different values of the wave number $\omega$ when using uniform refinements. Specifically, Tables 5.2 and 5.6 contain 
the results obtained for $\omega\in\{1,5\}$, for which $\omega^2$ is far from
the first eigenvalue of the problem: $2\pi^2$, while in Tables 5.3-5.5 are shown
the results for $\omega\in\{4,4.44,4.5\}$, such that $\omega^2$ is closer to
$2\pi^2$. In all the cases we observe that the $\bm{e}_h(u)$ and
$\bm{e}(\bm{\sigma})$ behave as ${\cal O}(h)$, as expected, while the $L^2-$
error norm of $u$ $(\bm{e}_0(u))$ converges at a rate of order ${\cal O}(h^2)$,
which is expected too, but have not been proved here. 
We remark, since $\omega\in\{4.44,4.5\}$ is very close to $\sqrt{2}\pi$, the
method requires smaller mesh size to behave well, in agreement with the theory
(see Tables 5.4 and 5.5). In addition, with the purpose of showing the robustness of the scheme for moderately large value of wave number, we summarize in Tables 5.7 and 5.8 the individual and global errors (including the $L^2-$ error norm $\bm{e}_0(u)$) and the effectivity index $\bm{e}/\eta$ obtained for  $\omega\in\{10,15\}$,
considering Example 1. In both two cases, the choice for $\omega$
requires a smaller mesh size to get an appropriate approximation of the exact
solution, which are in agreement with the theory. Moreover, the rate of convergence of
$\bm{e}_h(u)$ and $\bm{e}(\bm{\sigma})$ is the expected: ${\cal O}(h)$.  We remark that the ${\mathcal O}(h^2)$ behavior of $\bm{e}_0(u)$ using uniform refinement for all examples and choices of $\omega$ considered in this work has not been proved here, but we think that it should be derived by a standard duality argument.  Furthermore, in order to describe the behavior of the estimator for different values of the wave number $\omega$, in Tables 5.2-5.8 we have included a column with the effectivity indexes. We note that in all cases these indexes remain constants, which is in accordance with the reliability and local efficiency proved here.  

Since we have developed an a posteriori error estimator  $\eta$ in Theorem \ref{theo:apost}, we use it to develop an adaptive procedure in order to improve the quality of the initial approximation by refining the zones of $\bar\Omega$ where the (local) estimator dominates over (part of) the rest. On the other hand, it is well known that in order to avoid the {\it pollution effect} we should start with a {\em coarse} mesh satisfying $\omega\,h<1$. To the aim of comparing the adaptive and uniform strategies, we consider the following algorithm, which we called  {\it hybrid adaptive} algorithm:
\begin{enumerate}
\item[1.] Start with a coarse mesh ${\cal T}_h$.
\item[2.] Perform uniform refinements to ${\mathcal T}_h$ until the resulting mesh satisfies $\omega\,h<1$. Define this mesh as the new $\mathcal T_h$ and go to next step.
\item[3.] Solve the Galerkin scheme \eqref{LDG-form1} for the current mesh $\mathcal T_h$.
\item[4.] Compute $\eta_T$ for each triangle $T\in\mathcal T_h$.
\item[5.] Consider stopping criterion and decide to finish or go to the next step.
\item[6.] Use {\it newest vertex bisection} procedure to refine each element $T'\in{\mathcal T}_h$ such that 
$$ 
\eta_{T'}\,\ge\, \frac{1}{4}\,\max\{\eta_{T}\,:\,T\in\mathcal T_h\}\,.
$$
\item[7.] Define the resulting mesh as the new $\mathcal T_h$ and go to step 3.
\end{enumerate}

In practice, we should start with a {\it suitable} coarse mesh ${\mathcal T}_h$, satisfying $\omega h<1$, and go to step 3 in the proposed adaptive algorithm. However, we perform the described algorithm in order to compute the errors and effectivity indexes from a coarsest mesh not verifying the condition on $\omega h$ a priori and compared them with results obtained by the adaptive refinement.

We apply this algorithm to Examples 2 and 3. Respect to Example 2, it is not difficult to check that its exact solution has a singularity on the line $x_1=1.1$, which at the discrete level results in a numerical singularity on the edge $\{1\}\times[-1,0]$. In Tables 5.9-5.11 we resume the behavior of the individual and total errors, as well as the index of efficiency, after performing uniform refinement for different values of the wave number $\omega$. In all the cases we observe that the method converges with the optimal rate, and the effectivity index remains bounded.  Tables 5.15-5.17 contain the respective output when the proposed adaptive algorithm is applied. In this case, we observe that the method also converges at the same optimal rate, but it is able to detect the numerical singularity in the neighborhood of $\{1\}\times[-1,0]$, which yields to a boundary layer close to the referred part of $\partial\Omega$.  This is better described in Figures 5.1-5.3, where a comparison between the total error obtained performing uniform and adaptive refinements is shown, for $\omega\in\{1,10,15\}$. Thanks to the numerical singularity, the adaptive procedure is focused on the large error region and then it improves the quality of the approximation. Some intermediate adapted meshes for each value of $\omega$ considered here, are included in Figures 5.7-5.9, where the corresponding boundary layer is recognized and localized.

Now, concerning Example 3, we point out that the gradient of the exact solution is singular at the origin $(0,0)$,  so the expected rate of convergence of the method is of order $\mathcal O(h^{2/3})$, for moderate values of the wave number. This is confirmed when performing uniform refinements and can be noticed in Tables 5.12-5.14, where, in addition, the index of efficiency is bounded in all cases. As in the previous example, the adaptive refinement is able to detect the singularity region, and  we observe that the optimal rate of convergence is achieved, for different values of $\omega$ (cf. Tables 5.18-5.20).  Again, in all these cases, the effective index remains bounded. Figures 5.4-5.6 shows the improvement of the quality of the approximation when using adaptivity, for all the values of $\omega$ we considered. Some adapted meshes for the wave numbers we set here, are displayed in Figures 5.10-5.11, which exhibit the localization of the singularity. In addition, it is important to mention that this example behaves as predicted by our theory, despite the fact that it does not rely in it: the geometric condition on the mixed boundary $\partial\Omega$ is not satisfied in the present case  so we can not ensure that the adjoint problem has a smooth enough solution (cf. Grisvard \cite{Grisvard}). This example gives us, therefore, numerical evidence to conjecture that our results could be proved without using that geometric assumption.

\section*{Conclusions and final comments}
Summarizing, the numerical results presented here underline the reliability and efficiency of the a posteriori error estimator $\eta$, and strongly show that the associated hybrid adaptive algorithms are much more suita\-ble than a uniform discretization procedure when solving problems with non-smooth solutions. We notice that in all the examples we considered, the effectivity index does not behave as the current analysis predicts: ${\cal O}(\omega^{2})$. This gives us some numerical evidence that the behavior of this index could be overestimated and could be the subject of future research. 


\begin{center}
{\bf Table 5.2}. Example 1: uniform refinement with $\omega=1.0$
\begin{tabular}{|c||c|c||c|c||c|c||c|c||c|} 
\hline  
$N$ 	 & 	 $\e(u)$ 	 & 	 $r(u)$ 	 & 	 $\e(\bm{\sigma})$ 	 & 	 $r(\bm{\sigma})$ 	 & 	 $\e$ 	 & 	 $r$ 	 & 	$ \e_0(u)$ 	 & 	 $r_0$ 	 & 	 $\e/\eta$ 	 \\
\hline  
    36 	 & 	 1.667e+00 	 & 	 ----- 	 & 	 1.119e+00 	 & 	 ----- 	 & 	 2.008e+00 	 & 	 ----- 	 & 	 1.629e-01 	 & 	 ----- 	 & 	     0.1721 	 \\ 
   144 	 & 	 9.355e-01 	 & 	     0.8337 	 & 	 4.822e-01 	 & 	     1.2146 	 & 	 1.052e+00 	 & 	     0.9320 	 & 	 5.229e-02 	 & 	     1.6394 	 & 	     0.1721 	 \\ 
   576 	 & 	 4.863e-01 	 & 	     0.9438 	 & 	 2.803e-01 	 & 	     0.7829 	 & 	 5.613e-01 	 & 	     0.9069 	 & 	 1.552e-02 	 & 	     1.7524 	 & 	     0.1813 	 \\ 
  2304 	 & 	 2.441e-01 	 & 	     0.9944 	 & 	 1.585e-01 	 & 	     0.8225 	 & 	 2.910e-01 	 & 	     0.9476 	 & 	 4.304e-03 	 & 	     1.8505 	 & 	     0.1912 	 \\ 
  9216 	 & 	 1.222e-01 	 & 	     0.9983 	 & 	 8.318e-02 	 & 	     0.9298 	 & 	 1.478e-01 	 & 	     0.9773 	 & 	 1.121e-03 	 & 	     1.9414 	 & 	     0.1960 	 \\ 
 36864 	 & 	 6.117e-02 	 & 	     0.9982 	 & 	 4.237e-02 	 & 	     0.9734 	 & 	 7.441e-02 	 & 	     0.9903 	 & 	 2.846e-04 	 & 	     1.9772 	 & 	     0.1981 	 \\ 
147456 	 & 	 3.061e-02 	 & 	     0.9988 	 & 	 2.134e-02 	 & 	     0.9891 	 & 	 3.732e-02 	 & 	     0.9956 	 & 	 7.162e-05 	 & 	     1.9906 	 & 	     0.1990 	 \\ 
\hline  
\end{tabular} 

\end{center}
%
%
%
%

\begin{center}
{\bf Table 5.3}. Example 1: uniform refinement with $\omega=4.0$
\begin{tabular}{|c||c|c||c|c||c|c||c|c||c|} 
\hline  
$N$ 	 & 	 $\e(u)$ 	 & 	 $r(u)$ 	 & 	 $\e(\bm{\sigma})$ 	 & 	 $r(\bm{\sigma})$ 	 & 	 $\e$ 	 & 	 $r$ 	 & 	 $\e_0(u)$ 	 & 	 $r_0$ 	 & 	 $\e/\eta$ 	 \\
\hline  
    36 	 & 	 2.731e+01 	 & 	 ----- 	 & 	 1.797e+01 	 & 	 ----- 	 & 	 3.269e+01 	 & 	 ----- 	 & 	 4.623e+00 	 & 	 ----- 	 & 	     0.3164 	 \\ 
   144 	 & 	 9.569e-01 	 & 	     4.8348 	 & 	 6.021e-01 	 & 	     4.8993 	 & 	 1.131e+00 	 & 	     4.8537 	 & 	 1.085e-01 	 & 	     5.4135 	 & 	     0.3164 	 \\ 
   576 	 & 	 5.073e-01 	 & 	     0.9155 	 & 	 3.236e-01 	 & 	     0.8958 	 & 	 6.017e-01 	 & 	     0.9099 	 & 	 4.569e-02 	 & 	     1.2475 	 & 	     0.2304 	 \\ 
  2304 	 & 	 2.496e-01 	 & 	     1.0230 	 & 	 1.668e-01 	 & 	     0.9566 	 & 	 3.002e-01 	 & 	     1.0032 	 & 	 1.458e-02 	 & 	     1.6484 	 & 	     0.2205 	 \\ 
  9216 	 & 	 1.231e-01 	 & 	     1.0203 	 & 	 8.438e-02 	 & 	     0.9828 	 & 	 1.492e-01 	 & 	     1.0085 	 & 	 3.975e-03 	 & 	     1.8743 	 & 	     0.2069 	 \\ 
 36864 	 & 	 6.129e-02 	 & 	     1.0057 	 & 	 4.252e-02 	 & 	     0.9887 	 & 	 7.460e-02 	 & 	     1.0002 	 & 	 1.025e-03 	 & 	     1.9554 	 & 	     0.2012 	 \\ 
147456 	 & 	 3.063e-02 	 & 	     1.0009 	 & 	 2.136e-02 	 & 	     0.9931 	 & 	 3.734e-02 	 & 	     0.9983 	 & 	 2.593e-04 	 & 	     1.9829 	 & 	     0.1998 	 \\ 
\hline  
\end{tabular} 

\end{center}
%
%
%
\begin{center}
{\bf Table 5.4}. Example 1: uniform refinement with $\omega=4.44$
\begin{tabular}{|c||c|c||c|c||c|c||c|c||c|} 
\hline  
$N$ 	 & 	 $\e(u)$ 	 & 	 $r(u)$ 	 & 	 $\e(\bm{\sigma})$ 	 & 	 $r(\bm{\sigma})$ 	 & 	 $\e$ 	 & 	 $r$ 	 & 	 $\e_0(u)$ 	 & 	 $r_0$ 	 & 	 $\e/\eta$ 	 \\
\hline  
    36 	 & 	 2.250e+00 	 & 	 ----- 	 & 	 2.244e+00 	 & 	 ----- 	 & 	 3.177e+00 	 & 	 ----- 	 & 	 5.023e-01 	 & 	 ----- 	 & 	    43.1922 	 \\ 
   144 	 & 	 2.164e+00 	 & 	     0.0562 	 & 	 2.161e+00 	 & 	     0.0545 	 & 	 3.058e+00 	 & 	     0.0553 	 & 	 4.863e-01 	 & 	     0.0469 	 & 	    43.1922 	 \\ 
   576 	 & 	 2.077e+00 	 & 	     0.0589 	 & 	 2.074e+00 	 & 	     0.0591 	 & 	 2.935e+00 	 & 	     0.0590 	 & 	 4.668e-01 	 & 	     0.0590 	 & 	    18.0626 	 \\ 
  2304 	 & 	 1.803e+00 	 & 	     0.2045 	 & 	 1.800e+00 	 & 	     0.2046 	 & 	 2.547e+00 	 & 	     0.2046 	 & 	 4.050e-01 	 & 	     0.2048 	 & 	    14.7562 	 \\ 
  9216 	 & 	 1.188e+00 	 & 	     0.6017 	 & 	 1.185e+00 	 & 	     0.6023 	 & 	 1.678e+00 	 & 	     0.6020 	 & 	 2.667e-01 	 & 	     0.6029 	 & 	     8.9778 	 \\ 
 36864 	 & 	 5.068e-01 	 & 	     1.2290 	 & 	 5.049e-01 	 & 	     1.2313 	 & 	 7.154e-01 	 & 	     1.2301 	 & 	 1.134e-01 	 & 	     1.2336 	 & 	     4.8120 	 \\ 
147456 	 & 	 1.562e-01 	 & 	     1.6983 	 & 	 1.546e-01 	 & 	     1.7075 	 & 	 2.197e-01 	 & 	     1.7029 	 & 	 3.451e-02 	 & 	     1.7164 	 & 	     2.4737 	 \\ 
\hline  
\end{tabular} 

\end{center}
%
%
%
\begin{center}
{\bf Table 5.5}. Example 1: uniform refinement with $\omega=4.5$
\begin{tabular}{|c||c|c||c|c||c|c||c|c||c|} 
\hline  
$N$ 	 & 	 $\e(u)$ 	 & 	 $r(u)$ 	 & 	 $\e(\bm{\sigma})$ 	 & 	 $r(\bm{\sigma})$ 	 & 	 $\e$ 	 & 	 $r$ 	 & 	 $\e_0(u)$ 	 & 	 $r_0$ 	 & 	 $\e/\eta$ 	 \\
\hline  
    36 	 & 	 1.944e+00 	 & 	 ----- 	 & 	 2.046e+00 	 & 	 ----- 	 & 	 2.822e+00 	 & 	 ----- 	 & 	 4.429e-01 	 & 	 ----- 	 & 	     2.2210 	 \\ 
   144 	 & 	 5.152e+00 	 & 	    ----- 	 & 	 5.175e+00 	 & 	    ----- 	 & 	 7.302e+00 	 & 	    ----- 	 & 	 1.159e+00 	 & 	    ----- 	 & 	     2.2210 	 \\ 
   576 	 & 	 5.368e+00 	 & 	    ----- 	 & 	 5.356e+00 	 & 	    ----- 	 & 	 7.583e+00 	 & 	    ----- 	 & 	 1.187e+00 	 & 	    ----- 	 & 	     0.9030 	 \\ 
  2304 	 & 	 6.627e-01 	 & 	     3.0180 	 & 	 6.366e-01 	 & 	     3.0727 	 & 	 9.190e-01 	 & 	     3.0447 	 & 	 1.359e-01 	 & 	     3.1262 	 & 	     0.7514 	 \\ 
  9216 	 & 	 1.843e-01 	 & 	     1.8462 	 & 	 1.613e-01 	 & 	     1.9805 	 & 	 2.449e-01 	 & 	     1.9075 	 & 	 3.040e-02 	 & 	     2.1607 	 & 	     0.4839 	 \\ 
 36864 	 & 	 6.994e-02 	 & 	     1.3981 	 & 	 5.430e-02 	 & 	     1.5709 	 & 	 8.854e-02 	 & 	     1.4680 	 & 	 7.466e-03 	 & 	     2.0257 	 & 	     0.3090 	 \\ 
147456 	 & 	 3.176e-02 	 & 	     1.1387 	 & 	 2.297e-02 	 & 	     1.2412 	 & 	 3.920e-02 	 & 	     1.1756 	 & 	 1.866e-03 	 & 	     2.0004 	 & 	     0.2333 	 \\ 
\hline  
\end{tabular} 

\end{center}
%
%
%
%
\begin{center}
{\bf Table 5.6}. Example 1: uniform refinement with $\omega=5.0$
\begin{tabular}{|c||c|c||c|c||c|c||c|c||c|} 
\hline  
$N$ 	 & 	 $\e(u)$ 	 & 	 $r(u)$ 	 & 	 $\e(\bm{\sigma})$ 	 & 	 $r(\bm{\sigma})$ 	 & 	 $\e$ 	 & 	 $r$ 	 & 	 $\e_0(u)$ 	 & 	 $r_0$ 	 & 	 $\e/\eta$ 	 \\
\hline  
    36 	 & 	 1.281e+00 	 & 	 ----- 	 & 	 1.557e+00 	 & 	 ----- 	 & 	 2.016e+00 	 & 	 ----- 	 & 	 2.695e-01 	 & 	 ----- 	 & 	     0.3241 	 \\ 
   144 	 & 	 1.243e+00 	 & 	     0.0432 	 & 	 7.348e-01 	 & 	     1.0837 	 & 	 1.444e+00 	 & 	     0.4818 	 & 	 1.152e-01 	 & 	     1.2261 	 & 	     0.3241 	 \\ 
   576 	 & 	 5.392e-01 	 & 	     1.2048 	 & 	 3.438e-01 	 & 	     1.0958 	 & 	 6.395e-01 	 & 	     1.1749 	 & 	 3.902e-02 	 & 	     1.5618 	 & 	     0.1962 	 \\ 
  2304 	 & 	 2.519e-01 	 & 	     1.0980 	 & 	 1.690e-01 	 & 	     1.0248 	 & 	 3.033e-01 	 & 	     1.0761 	 & 	 1.116e-02 	 & 	     1.8054 	 & 	     0.1992 	 \\ 
  9216 	 & 	 1.232e-01 	 & 	     1.0314 	 & 	 8.463e-02 	 & 	     0.9975 	 & 	 1.495e-01 	 & 	     1.0207 	 & 	 2.937e-03 	 & 	     1.9266 	 & 	     0.1989 	 \\ 
 36864 	 & 	 6.131e-02 	 & 	     1.0073 	 & 	 4.255e-02 	 & 	     0.9919 	 & 	 7.463e-02 	 & 	     1.0024 	 & 	 7.496e-04 	 & 	     1.9702 	 & 	     0.1989 	 \\ 
147456 	 & 	 3.063e-02 	 & 	     1.0012 	 & 	 2.137e-02 	 & 	     0.9939 	 & 	 3.735e-02 	 & 	     0.9988 	 & 	 1.891e-04 	 & 	     1.9869 	 & 	     0.1992 	 \\ 
\hline  
\end{tabular} 

\end{center}
%
%
%
\begin{center}
{\bf Table 5.7}. Example 1: uniform refinement with $\omega=10.0$
\begin{tabular}{|c||c|c||c|c||c|c||c|c||c|} 
\hline  
$N$ 	 & 	 $\e(u)$ 	 & 	 $r(u)$ 	 & 	 $\e(\bm{\sigma})$ 	 & 	 $r(\bm{\sigma})$ 	 & 	 $\e$ 	 & 	 $r$ 	 & 	 $\e_0(u)$ 	 & 	 $r_0$ 	 & 	 $\e/\eta$ 	 \\
\hline  
    36 	 & 	 1.271e+00 	 & 	 ----- 	 & 	 1.723e+00 	 & 	 ----- 	 & 	 2.142e+00 	 & 	 ----- 	 & 	 1.658e-01 	 & 	 ----- 	 & 	     0.1011 	 \\ 
   144 	 & 	 2.157e+00 	 & 	    ----- 	 & 	 1.060e+00 	 & 	     0.7011 	 & 	 2.403e+00 	 & 	    ----- 	 & 	 1.316e-01 	 & 	     0.3341 	 & 	     0.1011 	 \\ 
   576 	 & 	 1.087e+00 	 & 	     0.9887 	 & 	 9.812e-01 	 & 	     0.1115 	 & 	 1.464e+00 	 & 	     0.7148 	 & 	 9.640e-02 	 & 	     0.4486 	 & 	     0.2300 	 \\ 
  2304 	 & 	 1.635e+00 	 & 	    ----- 	 & 	 1.624e+00 	 & 	    ----- 	 & 	 2.305e+00 	 & 	    ----- 	 & 	 1.623e-01 	 & 	    ----- 	 & 	     0.3659 	 \\ 
  9216 	 & 	 3.455e-01 	 & 	     2.2430 	 & 	 3.343e-01 	 & 	     2.2804 	 & 	 4.808e-01 	 & 	     2.2613 	 & 	 3.227e-02 	 & 	     2.3304 	 & 	     0.8365 	 \\ 
 36864 	 & 	 8.072e-02 	 & 	     2.0977 	 & 	 6.767e-02 	 & 	     2.3047 	 & 	 1.053e-01 	 & 	     2.1904 	 & 	 5.248e-03 	 & 	     2.6206 	 & 	     0.5980 	 \\ 
147456 	 & 	 3.289e-02 	 & 	     1.2952 	 & 	 2.451e-02 	 & 	     1.4649 	 & 	 4.102e-02 	 & 	     1.3605 	 & 	 1.198e-03 	 & 	     2.1305 	 & 	     0.2800 	 \\ 
\hline  
\end{tabular} 

\end{center}
%
%
%
%
\begin{center}
{\bf Table 5.8}. Example 1: uniform refinement with $\omega=15.0$
\begin{tabular}{|c||c|c||c|c||c|c||c|c||c|} 
\hline  
$N$ 	 & 	 $\e(u)$ 	 & 	 $r(u)$ 	 & 	 $\e(\bm{\sigma})$ 	 & 	 $r(\bm{\sigma})$ 	 & 	 $\e$ 	 & 	 $r$ 	 & 	$ \e_0(u)$ 	 & 	 $r_0$ 	 & 	 $\e/\eta$ 	 \\
\hline  
    36 	 & 	 1.088e+00 	 & 	 ----- 	 & 	 1.348e+00 	 & 	 ----- 	 & 	 1.733e+00 	 & 	 ----- 	 & 	 8.429e-02 	 & 	 ----- 	 & 	     0.0802 	 \\ 
   144 	 & 	 1.297e+00 	 & 	    ----- 	 & 	 1.393e+00 	 & 	    ----- 	 & 	 1.903e+00 	 & 	    ----- 	 & 	 1.032e-01 	 & 	    ----- 	 & 	     0.0802 	 \\ 
   576 	 & 	 5.437e-01 	 & 	     1.2539 	 & 	 3.563e-01 	 & 	     1.9672 	 & 	 6.500e-01 	 & 	     1.5497 	 & 	 1.906e-02 	 & 	     2.4368 	 & 	     0.1388 	 \\ 
  2304 	 & 	 2.469e-01 	 & 	     1.1390 	 & 	 1.644e-01 	 & 	     1.1153 	 & 	 2.967e-01 	 & 	     1.1318 	 & 	 3.875e-03 	 & 	     2.2987 	 & 	     0.2014 	 \\ 
  9216 	 & 	 1.227e-01 	 & 	     1.0093 	 & 	 8.409e-02 	 & 	     0.9677 	 & 	 1.487e-01 	 & 	     0.9963 	 & 	 1.036e-03 	 & 	     1.9036 	 & 	     0.1980 	 \\ 
 36864 	 & 	 6.124e-02 	 & 	     1.0021 	 & 	 4.249e-02 	 & 	     0.9846 	 & 	 7.454e-02 	 & 	     0.9965 	 & 	 2.711e-04 	 & 	     1.9338 	 & 	     0.1990 	 \\ 
147456 	 & 	 3.062e-02 	 & 	     1.0000 	 & 	 2.136e-02 	 & 	     0.9923 	 & 	 3.733e-02 	 & 	     0.9975 	 & 	 6.901e-05 	 & 	     1.9739 	 & 	     0.1993 	 \\ 
\hline  
\end{tabular} 

\end{center}
%
%

\begin{center}

{\bf Table 5.9}. Example 2: uniform refinement with $\omega=1$

\begin{tabular}{|c||c|c||c|c||c|c||c|c||c|} 
\hline  
$N$ 	 & 	 $\e(u)$ 	 & 	 $r(u)$ 	 & 	 $\e(\bm{\sigma})$ 	 & 	 $r(\bm{\sigma})$ 	 & 	 $\e$ 	 & 	 $r$ 	 & 	 $\e_0(u)$ 	 & 	 $r_0$ 	 & 	 $\e/\eta$ 	 \\\hline  
    54 	 & 	 3.295e+01 	 & 	 ----- 	 & 	 2.298e+01 	 & 	 ----- 	 & 	 4.017e+01 	 & 	 ----- 	 & 	 5.900e+00 	 & 	 ----- 	 & 	     0.1517 	 \\ 
   216 	 & 	 2.394e+01 	 & 	     0.4607 	 & 	 1.498e+01 	 & 	     0.6174 	 & 	 2.824e+01 	 & 	     0.5084 	 & 	 2.044e+00 	 & 	     1.5296 	 & 	     0.1517 	 \\ 
   864 	 & 	 1.514e+01 	 & 	     0.6607 	 & 	 7.545e+00 	 & 	     0.9893 	 & 	 1.692e+01 	 & 	     0.7391 	 & 	 5.490e-01 	 & 	     1.8961 	 & 	     0.1508 	 \\ 
  3456 	 & 	 8.927e+00 	 & 	     0.7624 	 & 	 2.996e+00 	 & 	     1.3324 	 & 	 9.417e+00 	 & 	     0.8453 	 & 	 1.417e-01 	 & 	     1.9540 	 & 	     0.1536 	 \\ 
 13824 	 & 	 4.908e+00 	 & 	     0.8631 	 & 	 9.685e-01 	 & 	     1.6294 	 & 	 5.003e+00 	 & 	     0.9125 	 & 	 3.821e-02 	 & 	     1.8908 	 & 	     0.1622 	 \\ 
 55296 	 & 	 2.555e+00 	 & 	     0.9421 	 & 	 3.991e-01 	 & 	     1.2791 	 & 	 2.586e+00 	 & 	     0.9522 	 & 	 1.025e-02 	 & 	     1.8980 	 & 	     0.1692 	 \\ 
221184 	 & 	 1.295e+00 	 & 	     0.9800 	 & 	 2.632e-01 	 & 	     0.6005 	 & 	 1.322e+00 	 & 	     0.9682 	 & 	 2.706e-03 	 & 	     1.9219 	 & 	     0.1724 	 \\ 
\hline  
\end{tabular} 

\end{center}

\begin{center}

{\bf Table 5.10}. Example 2: uniform refinement with $\omega=10$

\begin{tabular}{|c||c|c||c|c||c|c||c|c||c|} 
\hline  
$N$ 	 & 	 $\e(u)$ 	 & 	 $r(u)$ 	 & 	 $\e(\bm{\sigma})$ 	 & 	 $r(\bm{\sigma})$ 	 & 	 $\e$ 	 & 	 $r$ 	 & 	 $\e_0(u)$ 	 & 	 $r_0$ 	 & 	 $\e/\eta$ 	 \\\hline  
    54 	 & 	 1.823e+02 	 & 	 ----- 	 & 	 2.847e+02 	 & 	 ----- 	 & 	 3.381e+02 	 & 	 ----- 	 & 	 3.045e+01 	 & 	 ----- 	 & 	     0.0750 	 \\ 
   216 	 & 	 2.731e+01 	 & 	     2.7389 	 & 	 3.183e+01 	 & 	     3.1611 	 & 	 4.194e+01 	 & 	     3.0110 	 & 	 4.045e+00 	 & 	     2.9122 	 & 	     0.0750 	 \\ 
   864 	 & 	 2.440e+01 	 & 	     0.1625 	 & 	 1.787e+01 	 & 	     0.8325 	 & 	 3.025e+01 	 & 	     0.4715 	 & 	 1.627e+00 	 & 	     1.3140 	 & 	     0.1806 	 \\ 
  3456 	 & 	 2.426e+01 	 & 	     0.0084 	 & 	 2.212e+01 	 & 	    ----- 	 & 	 3.283e+01 	 & 	    ----- 	 & 	 2.200e+00 	 & 	    ----- 	 & 	     0.1961 	 \\ 
 13824 	 & 	 4.926e+00 	 & 	     2.3001 	 & 	 1.027e+00 	 & 	     4.4286 	 & 	 5.032e+00 	 & 	     2.7058 	 & 	 4.858e-02 	 & 	     5.5011 	 & 	     0.4282 	 \\ 
 55296 	 & 	 2.559e+00 	 & 	     0.9448 	 & 	 4.165e-01 	 & 	     1.3019 	 & 	 2.593e+00 	 & 	     0.9566 	 & 	 1.548e-02 	 & 	     1.6499 	 & 	     0.1701 	 \\ 
221184 	 & 	 1.296e+00 	 & 	     0.9814 	 & 	 2.664e-01 	 & 	     0.6447 	 & 	 1.323e+00 	 & 	     0.9704 	 & 	 4.894e-03 	 & 	     1.6611 	 & 	     0.1730 	 \\ 
\hline  
\end{tabular} 

\end{center}

%
%
\begin{center}
{\bf Table 5.11}. Example 2: uniform  refinement with $\omega=15.0$
\begin{tabular}{|c||c|c||c|c||c|c||c|c||c|} 
\hline  
$N$ 	 & 	   $\e(u)$ 	 & 	     $r(u)$ 	 & $\e(\bm{\sigma})$ 	 & $r(\bm{\sigma})$ 	 & 	      $\e$ 	 & 	 $r$ 	 & 	 $\e_0(u)$ 	 & 	 $r_0$ 	 & 	 $\e/\eta$ 	 \\\hline  
    54 	 & 	 1.384e+01 	 & 	      ----- 	 & 	 2.487e+01 	 & 	      ----- 	 & 	 2.847e+01 	 & 	 ----- 	 & 	 1.864e+00 	 & 	 ----- 	 & 	     0.0429 	 \\ 
   216 	 & 	 1.190e+02 	 & 	      ----- 	 & 	 1.661e+02 	 & 	      ----- 	 & 	 2.043e+02 	 & 	    ----- 	 & 	 1.168e+01 	 & 	    ----- 	 & 	     0.0429 	 \\ 
   864 	 & 	 3.577e+01 	 & 	     1.7334 	 & 	 3.391e+01 	 & 	     2.2927 	 & 	 4.929e+01 	 & 	     2.0515 	 & 	 2.264e+00 	 & 	     2.3669 	 & 	     0.0989 	 \\ 
  3456 	 & 	 1.055e+01 	 & 	     1.7613 	 & 	 6.061e+00 	 & 	     2.4840 	 & 	 1.217e+01 	 & 	     2.0181 	 & 	 3.680e-01 	 & 	     2.6212 	 & 	     0.1595 	 \\ 
 13824 	 & 	 4.966e+00 	 & 	     1.0873 	 & 	 1.174e+00 	 & 	     2.3679 	 & 	 5.103e+00 	 & 	     1.2537 	 & 	 5.623e-02 	 & 	     2.7103 	 & 	     0.1881 	 \\ 
 55296 	 & 	 2.564e+00 	 & 	     0.9536 	 & 	 4.419e-01 	 & 	     1.4098 	 & 	 2.602e+00 	 & 	     0.9718 	 & 	 1.604e-02 	 & 	     1.8092 	 & 	     0.1720 	 \\ 
221184 	 & 	 1.296e+00 	 & 	     0.9841 	 & 	 2.679e-01 	 & 	     0.7220 	 & 	 1.324e+00 	 & 	     0.9750 	 & 	 4.245e-03 	 & 	     1.9181 	 & 	     0.1737 	 \\ 
\hline  
\end{tabular} 

\end{center}

\begin{center}

{\bf Table 5.12}. Example 3: uniform refinement with $\omega=1$

\begin{tabular}{|c||c|c||c|c||c|c||c|c||c|} 
\hline  
$N$ 	 & 	 $\e(u)$ 	 & 	 $r(u)$ 	 & 	 $\e(\bm{\sigma})$ 	 & 	 $r(\bm{\sigma})$ 	 & 	 $\e$ 	 & 	 $r$ 	 & 	 $\e_0(u)$ 	 & 	 $r_0$ 	 & 	 $\e/\eta$ 	 \\\hline  
    54 	 & 	 2.918e-01 	 & 	 ----- 	 & 	 2.214e-01 	 & 	 ----- 	 & 	 3.662e-01 	 & 	 ----- 	 & 	 3.455e-02 	 & 	 ----- 	 & 	     0.1911 	 \\ 
   216 	 & 	 2.159e-01 	 & 	     0.4346 	 & 	 1.337e-01 	 & 	     0.7274 	 & 	 2.539e-01 	 & 	     0.5284 	 & 	 1.442e-02 	 & 	     1.2603 	 & 	     0.1911 	 \\ 
   864 	 & 	 1.467e-01 	 & 	     0.5574 	 & 	 8.440e-02 	 & 	     0.6637 	 & 	 1.692e-01 	 & 	     0.5853 	 & 	 5.906e-03 	 & 	     1.2881 	 & 	     0.1874 	 \\ 
  3456 	 & 	 9.596e-02 	 & 	     0.6123 	 & 	 5.398e-02 	 & 	     0.6450 	 & 	 1.101e-01 	 & 	     0.6203 	 & 	 2.111e-03 	 & 	     1.4841 	 & 	     0.1859 	 \\ 
 13824 	 & 	 6.180e-02 	 & 	     0.6347 	 & 	 3.441e-02 	 & 	     0.6497 	 & 	 7.073e-02 	 & 	     0.6383 	 & 	 7.442e-04 	 & 	     1.5042 	 & 	     0.1794 	 \\ 
 55296 	 & 	 3.946e-02 	 & 	     0.6473 	 & 	 2.185e-02 	 & 	     0.6551 	 & 	 4.510e-02 	 & 	     0.6491 	 & 	 2.657e-04 	 & 	     1.4860 	 & 	     0.1702 	 \\ 
221184 	 & 	 2.506e-02 	 & 	     0.6548 	 & 	 1.384e-02 	 & 	     0.6591 	 & 	 2.863e-02 	 & 	     0.6558 	 & 	 9.689e-05 	 & 	     1.4554 	 & 	     0.1595 	 \\ 
\hline  
\end{tabular} 

\end{center}

%
\begin{center}
{\bf Table 5.13}. Example 3: uniform refinement with $\omega=10.0$
\begin{tabular}{|c||c|c||c|c||c|c||c|c||c|} 
\hline  
$N$ 	 & 	 $\e(u)$ 	 & 	 $r(u)$ 	 & 	 $\e(\bm{\sigma})$ 	 & 	 $r(\bm{\sigma})$ 	 & 	 $\e$ 	 & 	 $r$ 	 & 	 $\e_0(u)$ 	 & 	 $r_0$ 	 & 	 $\e/\eta$ 	 \\\hline  
    54 	 & 	 5.118e-01 	 & 	 ----- 	 & 	 6.493e-01 	 & 	 ----- 	 & 	 8.268e-01 	 & 	 ----- 	 & 	 7.391e-02 	 & 	 ----- 	 & 	     0.0747 	 \\ 
   216 	 & 	 3.680e-01 	 & 	     0.4760 	 & 	 3.521e-01 	 & 	     0.8830 	 & 	 5.093e-01 	 & 	     0.6990 	 & 	 3.794e-02 	 & 	     0.9621 	 & 	     0.0747 	 \\ 
   864 	 & 	 1.614e-01 	 & 	     1.1891 	 & 	 1.042e-01 	 & 	     1.7561 	 & 	 1.921e-01 	 & 	     1.4065 	 & 	 7.287e-03 	 & 	     2.3803 	 & 	     0.1592 	 \\ 
  3456 	 & 	 9.701e-02 	 & 	     0.7342 	 & 	 5.493e-02 	 & 	     0.9243 	 & 	 1.115e-01 	 & 	     0.7851 	 & 	 1.728e-03 	 & 	     2.0761 	 & 	     0.1997 	 \\ 
 13824 	 & 	 6.192e-02 	 & 	     0.6477 	 & 	 3.446e-02 	 & 	     0.6725 	 & 	 7.087e-02 	 & 	     0.6536 	 & 	 4.796e-04 	 & 	     1.8494 	 & 	     0.1818 	 \\ 
 55296 	 & 	 3.948e-02 	 & 	     0.6494 	 & 	 2.186e-02 	 & 	     0.6568 	 & 	 4.512e-02 	 & 	     0.6512 	 & 	 1.547e-04 	 & 	     1.6318 	 & 	     0.1705 	 \\ 
221184 	 & 	 2.507e-02 	 & 	     0.6553 	 & 	 1.384e-02 	 & 	     0.6594 	 & 	 2.863e-02 	 & 	     0.6562 	 & 	 5.335e-05 	 & 	     1.5362 	 & 	     0.1596 	 \\ 
\hline  
\end{tabular} 

\end{center}

\begin{center}

{\bf Table 5.14}. Example 3: uniform refinement with $\omega=15$

\begin{tabular}{|c||c|c||c|c||c|c||c|c||c|} 
\hline  
$N$ 	 & 	 $\e(u)$ 	 & 	 $r(u)$ 	 & 	 $\e(\bm{\sigma})$ 	 & 	 $r(\bm{\sigma})$ 	 & 	 $\e$ 	 & 	 $r$ 	 & 	 $\e_0(u)$ 	 & 	 $r_0$ 	 & 	 $\e/\eta$ 	 \\\hline  
    54 	 & 	 3.665e-01 	 & 	      ----- 	 & 	 7.394e-01 	 & 	 ----- 	 & 	 8.252e-01 	 & 	 ----- 	 & 	 5.125e-02 	 & 	 ----- 	 & 	     0.0479 	 \\ 
   216 	 & 	 5.089e+00 	 & 	      ----- 	 & 	 7.604e+00 	 & 	      ----- 	 & 	 9.150e+00 	 & 	    ----- 	 & 	 5.319e-01 	 & 	    ----- 	 & 	     0.0479 	 \\ 
   864 	 & 	 2.585e-01 	 & 	     4.2995 	 & 	 1.793e-01 	 & 	     5.4060 	 & 	 3.146e-01 	 & 	     4.8623 	 & 	 1.364e-02 	 & 	     5.2850 	 & 	     0.0992 	 \\ 
  3456 	 & 	 1.564e-01 	 & 	     0.7244 	 & 	 1.270e-01 	 & 	     0.4985 	 & 	 2.015e-01 	 & 	     0.6429 	 & 	 7.878e-03 	 & 	     0.7923 	 & 	     0.2020 	 \\ 
 13824 	 & 	 6.458e-02 	 & 	     1.2765 	 & 	 3.874e-02 	 & 	     1.7122 	 & 	 7.531e-02 	 & 	     1.4197 	 & 	 1.281e-03 	 & 	     2.6203 	 & 	     0.2791 	 \\ 
 55296 	 & 	 3.956e-02 	 & 	     0.7070 	 & 	 2.200e-02 	 & 	     0.8166 	 & 	 4.526e-02 	 & 	     0.7344 	 & 	 2.157e-04 	 & 	     2.5704 	 & 	     0.1810 	 \\ 
221184 	 & 	 2.507e-02 	 & 	     0.6578 	 & 	 1.385e-02 	 & 	     0.6679 	 & 	 2.864e-02 	 & 	     0.6602 	 & 	 5.689e-05 	 & 	     1.9229 	 & 	     0.1600 	 \\ 
\hline  
\end{tabular} 

\end{center}



\newpage

\begin{center}
{\bf Table 5.15}. Example 2: hybrid adaptive refinement with $\omega=1.0$
\begin{tabular}{|c||c|c||c|c||c|c||c|} 
\hline  
$N$ 	 & 	 $\e(u)$ 	 & 	 $r(u)$ 	 & 	 $\e(\bm{\sigma})$ 	 & 	 $r(\bm{\sigma})$ 	 & 	 $\e$ 	 & 	 $r$ 	 &  	 $\e/\eta$ 	 \\
\hline  
    54 	 & 	 3.295e+01 	 & 	      ----- 	 & 	 2.298e+01 	 & 	      ----- 	 & 	 4.017e+01 	 & 	      ----- 	 & 	     0.1517 	 \\ 
   216 	 & 	 2.394e+01 	 & 	     0.4607 	 & 	 1.498e+01 	 & 	     0.6174 	 & 	 2.824e+01 	 & 	     0.5084 	 & 	     0.1517 	 \\ 
   252 	 & 	 2.148e+01 	 & 	     1.4097 	 & 	 9.065e+00 	 & 	     6.5157 	 & 	 2.331e+01 	 & 	     2.4891 	 & 	     0.1508 	 \\ 
   270 	 & 	 1.812e+01 	 & 	     4.9215 	 & 	 7.802e+00 	 & 	     4.3518 	 & 	 1.973e+01 	 & 	     4.8339 	 & 	     0.1495 	 \\ 
   378 	 & 	 1.233e+01 	 & 	     2.2903 	 & 	 4.207e+00 	 & 	     3.6715 	 & 	 1.303e+01 	 & 	     2.4682 	 & 	     0.1719 	 \\ 
   414 	 & 	 1.119e+01 	 & 	     2.1266 	 & 	 4.183e+00 	 & 	     0.1236 	 & 	 1.195e+01 	 & 	     1.8998 	 & 	     0.1566 	 \\ 
   630 	 & 	 6.986e+00 	 & 	     2.2446 	 & 	 2.113e+00 	 & 	     3.2540 	 & 	 7.298e+00 	 & 	     2.3476 	 & 	     0.1859 	 \\ 
   828 	 & 	 5.371e+00 	 & 	     1.9239 	 & 	 1.394e+00 	 & 	     3.0423 	 & 	 5.549e+00 	 & 	     2.0055 	 & 	     0.1679 	 \\ 
  1134 	 & 	 4.041e+00 	 & 	     1.8098 	 & 	 1.248e+00 	 & 	     0.7059 	 & 	 4.229e+00 	 & 	     1.7275 	 & 	     0.1735 	 \\ 
  2070 	 & 	 2.901e+00 	 & 	     1.1010 	 & 	 8.147e-01 	 & 	     1.4161 	 & 	 3.013e+00 	 & 	     1.1262 	 & 	     0.1709 	 \\ 
  3222 	 & 	 2.429e+00 	 & 	     0.8037 	 & 	 9.596e-01 	 & 	      ----- 	 & 	 2.611e+00 	 & 	     0.6474 	 & 	     0.1723 	 \\ 
  6732 	 & 	 1.611e+00 	 & 	     1.1141 	 & 	 5.239e-01 	 & 	     1.6426 	 & 	 1.694e+00 	 & 	     1.1746 	 & 	     0.1919 	 \\ 
 12456 	 & 	 1.197e+00 	 & 	     0.9653 	 & 	 4.504e-01 	 & 	     0.4914 	 & 	 1.279e+00 	 & 	     0.9135 	 & 	     0.1839 	 \\ 
 23364 	 & 	 8.526e-01 	 & 	     1.0789 	 & 	 3.089e-01 	 & 	     1.1988 	 & 	 9.069e-01 	 & 	     1.0933 	 & 	     0.1873 	 \\ 
 46521 	 & 	 6.143e-01 	 & 	     0.9518 	 & 	 2.382e-01 	 & 	     0.7550 	 & 	 6.589e-01 	 & 	     0.9275 	 & 	     0.1855 	 \\ 
 89235 	 & 	 4.229e-01 	 & 	     1.1467 	 & 	 1.447e-01 	 & 	     1.5297 	 & 	 4.470e-01 	 & 	     1.1917 	 & 	     0.1876 	 \\ 
173520 	 & 	 3.117e-01 	 & 	     0.9170 	 & 	 1.201e-01 	 & 	     0.5622 	 & 	 3.341e-01 	 & 	     0.8756 	 & 	     0.1799 	 \\ 
328185 	 & 	 2.212e-01 	 & 	     1.0765 	 & 	 7.958e-02 	 & 	     1.2905 	 & 	 2.351e-01 	 & 	     1.1026 	 & 	     0.1847 	 \\ 
\hline  
\end{tabular}
\end{center} 


\begin{center}
{\bf Table 5.16}. Example 2: hybrid adaptive refinement with $\omega=10.0$
\begin{tabular}{|c||c|c||c|c||c|c||c|} 
\hline  
$N$ 	 & 	 $\e(u)$ 	 & 	 $r(u)$ 	 & 	 $\e(\bm{\sigma})$ & 	 $r(\bm{\sigma})$ & 	      $\e$ 	 & 	        $r$ 	 &  	  $\e/\eta$ 	 \\\hline  
    54 	 & 	 1.823e+02 	 & 	      ----- 	 & 	 2.847e+02 	 & 	      ----- 	 & 	 3.381e+02 	 & 	      ----- 	 &  	     0.0750 	 \\ 
   216 	 & 	 2.731e+01 	 & 	     2.7389 	 & 	 3.183e+01 	 & 	     3.1611 	 & 	 4.194e+01 	 & 	     3.0110 	 &  	     0.0750 	 \\ 
   864 	 & 	 2.440e+01 	 & 	     0.1625 	 & 	 1.787e+01 	 & 	     0.8325 	 & 	 3.025e+01 	 & 	     0.4715 	 &  	     0.1806 	 \\ 
  3456 	 & 	 2.426e+01 	 & 	     0.0084 	 & 	 2.212e+01 	 & 	      ----- 	 & 	 3.283e+01 	 & 	       ---- 	 &  	     0.1961 	 \\ 
 13824 	 & 	 4.926e+00 	 & 	     2.3001 	 & 	 1.027e+00 	 & 	     4.4286 	 & 	 5.032e+00 	 & 	     2.7058 	 &  	     0.4282 	 \\ 
 14112 	 & 	 3.962e+00 	 & 	    21.1095 	 & 	 1.432e+00 	 & 	      ----- 	 & 	 4.213e+00 	 & 	    17.2184 	 &  	     0.1701 	 \\ 
 14544 	 & 	 2.880e+00 	 & 	    21.1550 	 & 	 9.110e-01 	 & 	    30.0031 	 & 	 3.021e+00 	 & 	    22.0647 	 &  	     0.1873 	 \\ 
 15696 	 & 	 2.321e+00 	 & 	     5.6609 	 & 	 9.138e-01 	 & 	      ----- 	 & 	 2.495e+00 	 & 	     5.0216 	 &  	     0.1765 	 \\ 
 18558 	 & 	 1.594e+00 	 & 	     4.4874 	 & 	 5.338e-01 	 & 	     6.4195 	 & 	 1.681e+00 	 & 	     4.7131 	 &  	     0.1935 	 \\ 
 23922 	 & 	 1.185e+00 	 & 	     2.3332 	 & 	 4.509e-01 	 & 	     1.3300 	 & 	 1.268e+00 	 & 	     2.2196 	 &  	     0.1856 	 \\ 
 34290 	 & 	 8.409e-01 	 & 	     1.9078 	 & 	 3.043e-01 	 & 	     2.1842 	 & 	 8.942e-01 	 & 	     1.9412 	 &  	     0.1891 	 \\ 
 56304 	 & 	 6.103e-01 	 & 	     1.2927 	 & 	 2.361e-01 	 & 	     1.0224 	 & 	 6.544e-01 	 & 	     1.2595 	 &  	     0.1858 	 \\ 
 97893 	 & 	 4.209e-01 	 & 	     1.3435 	 & 	 1.439e-01 	 & 	     1.7920 	 & 	 4.448e-01 	 & 	     1.3960 	 &  	     0.1878 	 \\ 
171693 	 & 	 3.176e-01 	 & 	     1.0021 	 & 	 1.217e-01 	 & 	     0.5945 	 & 	 3.401e-01 	 & 	     0.9548 	 &  	     0.1800 	 \\ 
324936 	 & 	 2.243e-01 	 & 	     1.0903 	 & 	 8.097e-02 	 & 	     1.2783 	 & 	 2.385e-01 	 & 	     1.1132 	 &  	     0.1844 	 \\ 
\hline  
\end{tabular}
\end{center} 


%
\begin{center}
{\bf Table 5.17}. Example 2: hybrid adaptive refinement with $\omega=15.0$
\begin{tabular}{|c||c|c||c|c||c|c||c|c||c|} 
\hline  
$N$ 	 & 	   $\e(u)$ 	 & 	     $r(u)$ 	 & $\e(\bm{\sigma})$ 	 & 	 $r(\bm{\sigma})$ & 	      $\e$ 	 & 	        $r$ 	 &  	 $\e/\eta$ 	 \\\hline  
    54 	 & 	 1.384e+01 	 & 	      ----- 	 & 	 2.487e+01 	 & 	      ----- 	 & 	 2.847e+01 	 & 	      ----- 	 &  	     0.0429 	 \\ 
   216 	 & 	 1.190e+02 	 & 	      ----- 	 & 	 1.661e+02 	 & 	      ----- 	 & 	 2.043e+02 	 & 	      ----- 	 &  	     0.0429 	 \\ 
   864 	 & 	 3.577e+01 	 & 	     1.7334 	 & 	 3.391e+01 	 & 	     2.2927 	 & 	 4.929e+01 	 & 	     2.0515 	 &  	     0.0989 	 \\ 
  3456 	 & 	 1.055e+01 	 & 	     1.7613 	 & 	 6.061e+00 	 & 	     2.4840 	 & 	 1.217e+01 	 & 	     2.0181 	 &  	     0.1595 	 \\ 
 13824 	 & 	 4.966e+00 	 & 	     1.0873 	 & 	 1.174e+00 	 & 	     2.3679 	 & 	 5.103e+00 	 & 	     1.2537 	 &  	     0.1881 	 \\ 
 55296 	 & 	 2.564e+00 	 & 	     0.9536 	 & 	 4.419e-01 	 & 	     1.4098 	 & 	 2.602e+00 	 & 	     0.9718 	 &  	     0.1720 	 \\ 
 55872 	 & 	 2.157e+00 	 & 	    33.3993 	 & 	 7.913e-01 	 & 	      ----- 	 & 	 2.297e+00 	 & 	    24.0356 	 &  	     0.1737 	 \\ 
 57888 	 & 	 1.553e+00 	 & 	    18.5436 	 & 	 5.102e-01 	 & 	    24.7643 	 & 	 1.634e+00 	 & 	    19.2139 	 &  	     0.1891 	 \\ 
 62514 	 & 	 1.162e+00 	 & 	     7.5468 	 & 	 4.379e-01 	 & 	     3.9739 	 & 	 1.241e+00 	 & 	     7.1521 	 &  	     0.1857 	 \\ 
 72018 	 & 	 8.300e-01 	 & 	     4.7507 	 & 	 3.008e-01 	 & 	     5.3071 	 & 	 8.828e-01 	 & 	     4.8176 	 &  	     0.1895 	 \\ 
 93294 	 & 	 6.016e-01 	 & 	     2.4863 	 & 	 2.337e-01 	 & 	     1.9524 	 & 	 6.454e-01 	 & 	     2.4204 	 &  	     0.1865 	 \\ 
133092 	 & 	 4.181e-01 	 & 	     2.0485 	 & 	 1.441e-01 	 & 	     2.7202 	 & 	 4.423e-01 	 & 	     2.1279 	 &  	     0.1880 	 \\ 
214128 	 & 	 3.087e-01 	 & 	     1.2754 	 & 	 1.188e-01 	 & 	     0.8123 	 & 	 3.308e-01 	 & 	     1.2211 	 &  	     0.1806 	 \\ 
364752 	 & 	 2.196e-01 	 & 	     1.2801 	 & 	 7.895e-02 	 & 	     1.5345 	 & 	 2.333e-01 	 & 	     1.3110	 &  	     0.1848 	 \\ 
\hline  
\end{tabular} 
\end{center}

\newpage

%
\begin{center}
\setlength{\unitlength}{0.240900pt}
\ifx\plotpoint\undefined\newsavebox{\plotpoint}\fi
\sbox{\plotpoint}{\rule[-0.200pt]{0.400pt}{0.400pt}}%
\begin{picture}(1500,900)(0,0)
\sbox{\plotpoint}{\rule[-0.200pt]{0.400pt}{0.400pt}}%
\put(201.0,123.0){\rule[-0.200pt]{4.818pt}{0.400pt}}
\put(181,123){\makebox(0,0)[r]{$0.1$}}
\put(1419.0,123.0){\rule[-0.200pt]{4.818pt}{0.400pt}}
\put(201.0,197.0){\rule[-0.200pt]{2.409pt}{0.400pt}}
\put(1429.0,197.0){\rule[-0.200pt]{2.409pt}{0.400pt}}
\put(201.0,240.0){\rule[-0.200pt]{2.409pt}{0.400pt}}
\put(1429.0,240.0){\rule[-0.200pt]{2.409pt}{0.400pt}}
\put(201.0,271.0){\rule[-0.200pt]{2.409pt}{0.400pt}}
\put(1429.0,271.0){\rule[-0.200pt]{2.409pt}{0.400pt}}
\put(201.0,295.0){\rule[-0.200pt]{2.409pt}{0.400pt}}
\put(1429.0,295.0){\rule[-0.200pt]{2.409pt}{0.400pt}}
\put(201.0,314.0){\rule[-0.200pt]{2.409pt}{0.400pt}}
\put(1429.0,314.0){\rule[-0.200pt]{2.409pt}{0.400pt}}
\put(201.0,331.0){\rule[-0.200pt]{2.409pt}{0.400pt}}
\put(1429.0,331.0){\rule[-0.200pt]{2.409pt}{0.400pt}}
\put(201.0,345.0){\rule[-0.200pt]{2.409pt}{0.400pt}}
\put(1429.0,345.0){\rule[-0.200pt]{2.409pt}{0.400pt}}
\put(201.0,357.0){\rule[-0.200pt]{2.409pt}{0.400pt}}
\put(1429.0,357.0){\rule[-0.200pt]{2.409pt}{0.400pt}}
\put(201.0,369.0){\rule[-0.200pt]{4.818pt}{0.400pt}}
\put(181,369){\makebox(0,0)[r]{$1$}}
\put(1419.0,369.0){\rule[-0.200pt]{4.818pt}{0.400pt}}
\put(201.0,443.0){\rule[-0.200pt]{2.409pt}{0.400pt}}
\put(1429.0,443.0){\rule[-0.200pt]{2.409pt}{0.400pt}}
\put(201.0,486.0){\rule[-0.200pt]{2.409pt}{0.400pt}}
\put(1429.0,486.0){\rule[-0.200pt]{2.409pt}{0.400pt}}
\put(201.0,517.0){\rule[-0.200pt]{2.409pt}{0.400pt}}
\put(1429.0,517.0){\rule[-0.200pt]{2.409pt}{0.400pt}}
\put(201.0,540.0){\rule[-0.200pt]{2.409pt}{0.400pt}}
\put(1429.0,540.0){\rule[-0.200pt]{2.409pt}{0.400pt}}
\put(201.0,560.0){\rule[-0.200pt]{2.409pt}{0.400pt}}
\put(1429.0,560.0){\rule[-0.200pt]{2.409pt}{0.400pt}}
\put(201.0,576.0){\rule[-0.200pt]{2.409pt}{0.400pt}}
\put(1429.0,576.0){\rule[-0.200pt]{2.409pt}{0.400pt}}
\put(201.0,591.0){\rule[-0.200pt]{2.409pt}{0.400pt}}
\put(1429.0,591.0){\rule[-0.200pt]{2.409pt}{0.400pt}}
\put(201.0,603.0){\rule[-0.200pt]{2.409pt}{0.400pt}}
\put(1429.0,603.0){\rule[-0.200pt]{2.409pt}{0.400pt}}
\put(201.0,614.0){\rule[-0.200pt]{4.818pt}{0.400pt}}
\put(181,614){\makebox(0,0)[r]{$10$}}
\put(1419.0,614.0){\rule[-0.200pt]{4.818pt}{0.400pt}}
\put(201.0,688.0){\rule[-0.200pt]{2.409pt}{0.400pt}}
\put(1429.0,688.0){\rule[-0.200pt]{2.409pt}{0.400pt}}
\put(201.0,732.0){\rule[-0.200pt]{2.409pt}{0.400pt}}
\put(1429.0,732.0){\rule[-0.200pt]{2.409pt}{0.400pt}}
\put(201.0,762.0){\rule[-0.200pt]{2.409pt}{0.400pt}}
\put(1429.0,762.0){\rule[-0.200pt]{2.409pt}{0.400pt}}
\put(201.0,786.0){\rule[-0.200pt]{2.409pt}{0.400pt}}
\put(1429.0,786.0){\rule[-0.200pt]{2.409pt}{0.400pt}}
\put(201.0,805.0){\rule[-0.200pt]{2.409pt}{0.400pt}}
\put(1429.0,805.0){\rule[-0.200pt]{2.409pt}{0.400pt}}
\put(201.0,822.0){\rule[-0.200pt]{2.409pt}{0.400pt}}
\put(1429.0,822.0){\rule[-0.200pt]{2.409pt}{0.400pt}}
\put(201.0,836.0){\rule[-0.200pt]{2.409pt}{0.400pt}}
\put(1429.0,836.0){\rule[-0.200pt]{2.409pt}{0.400pt}}
\put(201.0,849.0){\rule[-0.200pt]{2.409pt}{0.400pt}}
\put(1429.0,849.0){\rule[-0.200pt]{2.409pt}{0.400pt}}
\put(201.0,860.0){\rule[-0.200pt]{4.818pt}{0.400pt}}
\put(181,860){\makebox(0,0)[r]{$100$}}
\put(1419.0,860.0){\rule[-0.200pt]{4.818pt}{0.400pt}}
\put(201.0,123.0){\rule[-0.200pt]{0.400pt}{4.818pt}}
\put(201,82){\makebox(0,0){$10$}}
\put(201.0,840.0){\rule[-0.200pt]{0.400pt}{4.818pt}}
\put(279.0,123.0){\rule[-0.200pt]{0.400pt}{2.409pt}}
\put(279.0,850.0){\rule[-0.200pt]{0.400pt}{2.409pt}}
\put(382.0,123.0){\rule[-0.200pt]{0.400pt}{2.409pt}}
\put(382.0,850.0){\rule[-0.200pt]{0.400pt}{2.409pt}}
\put(435.0,123.0){\rule[-0.200pt]{0.400pt}{2.409pt}}
\put(435.0,850.0){\rule[-0.200pt]{0.400pt}{2.409pt}}
\put(460.0,123.0){\rule[-0.200pt]{0.400pt}{4.818pt}}
\put(460,82){\makebox(0,0){$100$}}
\put(460.0,840.0){\rule[-0.200pt]{0.400pt}{4.818pt}}
\put(538.0,123.0){\rule[-0.200pt]{0.400pt}{2.409pt}}
\put(538.0,850.0){\rule[-0.200pt]{0.400pt}{2.409pt}}
\put(641.0,123.0){\rule[-0.200pt]{0.400pt}{2.409pt}}
\put(641.0,850.0){\rule[-0.200pt]{0.400pt}{2.409pt}}
\put(694.0,123.0){\rule[-0.200pt]{0.400pt}{2.409pt}}
\put(694.0,850.0){\rule[-0.200pt]{0.400pt}{2.409pt}}
\put(719.0,123.0){\rule[-0.200pt]{0.400pt}{4.818pt}}
\put(719,82){\makebox(0,0){$1000$}}
\put(719.0,840.0){\rule[-0.200pt]{0.400pt}{4.818pt}}
\put(797.0,123.0){\rule[-0.200pt]{0.400pt}{2.409pt}}
\put(797.0,850.0){\rule[-0.200pt]{0.400pt}{2.409pt}}
\put(900.0,123.0){\rule[-0.200pt]{0.400pt}{2.409pt}}
\put(900.0,850.0){\rule[-0.200pt]{0.400pt}{2.409pt}}
\put(953.0,123.0){\rule[-0.200pt]{0.400pt}{2.409pt}}
\put(953.0,850.0){\rule[-0.200pt]{0.400pt}{2.409pt}}
\put(978.0,123.0){\rule[-0.200pt]{0.400pt}{4.818pt}}
\put(978,82){\makebox(0,0){$10000$}}
\put(978.0,840.0){\rule[-0.200pt]{0.400pt}{4.818pt}}
\put(1056.0,123.0){\rule[-0.200pt]{0.400pt}{2.409pt}}
\put(1056.0,850.0){\rule[-0.200pt]{0.400pt}{2.409pt}}
\put(1159.0,123.0){\rule[-0.200pt]{0.400pt}{2.409pt}}
\put(1159.0,850.0){\rule[-0.200pt]{0.400pt}{2.409pt}}
\put(1212.0,123.0){\rule[-0.200pt]{0.400pt}{2.409pt}}
\put(1212.0,850.0){\rule[-0.200pt]{0.400pt}{2.409pt}}
\put(1237.0,123.0){\rule[-0.200pt]{0.400pt}{4.818pt}}
\put(1237,82){\makebox(0,0){$100000$}}
\put(1237.0,840.0){\rule[-0.200pt]{0.400pt}{4.818pt}}
\put(1315.0,123.0){\rule[-0.200pt]{0.400pt}{2.409pt}}
\put(1315.0,850.0){\rule[-0.200pt]{0.400pt}{2.409pt}}
\put(1418.0,123.0){\rule[-0.200pt]{0.400pt}{2.409pt}}
\put(1418.0,850.0){\rule[-0.200pt]{0.400pt}{2.409pt}}
\put(201.0,123.0){\rule[-0.200pt]{0.400pt}{177.543pt}}
\put(201.0,123.0){\rule[-0.200pt]{298.234pt}{0.400pt}}
\put(1439.0,123.0){\rule[-0.200pt]{0.400pt}{177.543pt}}
\put(201.0,860.0){\rule[-0.200pt]{298.234pt}{0.400pt}}
\put(40,491){\makebox(0,0){$\e$}}
\put(820,21){\makebox(0,0){Degrees of freedom ${N}$}}
\put(1272,816){\makebox(0,0)[r]{Uniform refinement}}
\put(1292.0,816.0){\rule[-0.200pt]{24.090pt}{0.400pt}}
\put(391,763){\usebox{\plotpoint}}
\multiput(391.00,761.92)(2.066,-0.498){73}{\rule{1.742pt}{0.120pt}}
\multiput(391.00,762.17)(152.384,-38.000){2}{\rule{0.871pt}{0.400pt}}
\multiput(547.00,723.92)(1.423,-0.499){107}{\rule{1.235pt}{0.120pt}}
\multiput(547.00,724.17)(153.438,-55.000){2}{\rule{0.617pt}{0.400pt}}
\multiput(703.00,668.92)(1.262,-0.499){121}{\rule{1.106pt}{0.120pt}}
\multiput(703.00,669.17)(153.704,-62.000){2}{\rule{0.553pt}{0.400pt}}
\multiput(859.00,606.92)(1.150,-0.499){133}{\rule{1.018pt}{0.120pt}}
\multiput(859.00,607.17)(153.888,-68.000){2}{\rule{0.509pt}{0.400pt}}
\multiput(1015.00,538.92)(1.117,-0.499){137}{\rule{0.991pt}{0.120pt}}
\multiput(1015.00,539.17)(153.942,-70.000){2}{\rule{0.496pt}{0.400pt}}
\multiput(1171.00,468.92)(1.086,-0.499){141}{\rule{0.967pt}{0.120pt}}
\multiput(1171.00,469.17)(153.994,-72.000){2}{\rule{0.483pt}{0.400pt}}
\put(391,763){\raisebox{-.8pt}{\makebox(0,0){$\Diamond$}}}
\put(547,725){\raisebox{-.8pt}{\makebox(0,0){$\Diamond$}}}
\put(703,670){\raisebox{-.8pt}{\makebox(0,0){$\Diamond$}}}
\put(859,608){\raisebox{-.8pt}{\makebox(0,0){$\Diamond$}}}
\put(1015,540){\raisebox{-.8pt}{\makebox(0,0){$\Diamond$}}}
\put(1171,470){\raisebox{-.8pt}{\makebox(0,0){$\Diamond$}}}
\put(1327,398){\raisebox{-.8pt}{\makebox(0,0){$\Diamond$}}}
\put(1342,816){\raisebox{-.8pt}{\makebox(0,0){$\Diamond$}}}
\put(1272,775){\makebox(0,0)[r]{Adaptive refinement based on $\eta$}}
\multiput(1292,775)(20.756,0.000){5}{\usebox{\plotpoint}}
\put(1392,775){\usebox{\plotpoint}}
\put(391,763){\usebox{\plotpoint}}
\multiput(391,763)(20.166,-4.912){8}{\usebox{\plotpoint}}
\multiput(547,725)(13.442,-15.814){2}{\usebox{\plotpoint}}
\multiput(572,687)(13.566,-15.708){3}{\usebox{\plotpoint}}
\put(613.66,639.34){\usebox{\plotpoint}}
\multiput(620,633)(13.917,-15.398){3}{\usebox{\plotpoint}}
\multiput(667,581)(14.915,-14.434){2}{\usebox{\plotpoint}}
\multiput(698,551)(16.207,-12.966){3}{\usebox{\plotpoint}}
\multiput(733,523)(18.231,-9.920){3}{\usebox{\plotpoint}}
\multiput(801,486)(19.880,-5.964){3}{\usebox{\plotpoint}}
\multiput(851,471)(18.154,-10.061){4}{\usebox{\plotpoint}}
\multiput(934,425)(19.034,-8.276){4}{\usebox{\plotpoint}}
\multiput(1003,395)(18.406,-9.592){4}{\usebox{\plotpoint}}
\multiput(1074,358)(18.987,-8.384){4}{\usebox{\plotpoint}}
\multiput(1151,324)(18.155,-10.059){4}{\usebox{\plotpoint}}
\multiput(1225,283)(19.144,-8.020){4}{\usebox{\plotpoint}}
\multiput(1299,252)(18.356,-9.688){4}{\usebox{\plotpoint}}
\put(1371,214){\usebox{\plotpoint}}
\put(391,763){\makebox(0,0){$+$}}
\put(547,725){\makebox(0,0){$+$}}
\put(564,705){\makebox(0,0){$+$}}
\put(572,687){\makebox(0,0){$+$}}
\put(610,643){\makebox(0,0){$+$}}
\put(620,633){\makebox(0,0){$+$}}
\put(667,581){\makebox(0,0){$+$}}
\put(698,551){\makebox(0,0){$+$}}
\put(733,523){\makebox(0,0){$+$}}
\put(801,486){\makebox(0,0){$+$}}
\put(851,471){\makebox(0,0){$+$}}
\put(934,425){\makebox(0,0){$+$}}
\put(1003,395){\makebox(0,0){$+$}}
\put(1074,358){\makebox(0,0){$+$}}
\put(1151,324){\makebox(0,0){$+$}}
\put(1225,283){\makebox(0,0){$+$}}
\put(1299,252){\makebox(0,0){$+$}}
\put(1371,214){\makebox(0,0){$+$}}
\put(1342,775){\makebox(0,0){$+$}}
\put(201.0,123.0){\rule[-0.200pt]{0.400pt}{177.543pt}}
\put(201.0,123.0){\rule[-0.200pt]{298.234pt}{0.400pt}}
\put(1439.0,123.0){\rule[-0.200pt]{0.400pt}{177.543pt}}
\put(201.0,860.0){\rule[-0.200pt]{298.234pt}{0.400pt}}
\end{picture}

\medskip
{\bf Figure 5.1}. Example 2: Global error for the uniform and adaptive refinements, with $\omega=1.0$ 
\end{center}
%
%
%
\begin{center}
\input{bbd-II-w10}

\medskip
{\bf Figure 5.2}. Example 2: Global error for the uniform and adaptive refinements, with $\omega=10.0$
\end{center}
%
%
%
%
\begin{center}
\input{bbd-II-w15}

\medskip
{\bf Figure 5.3}. Example 2: Global error for the uniform and adaptive refinements, with $\omega=15.0$
\end{center}

%
%
\begin{center}
\setlength{\unitlength}{0.240900pt}
\ifx\plotpoint\undefined\newsavebox{\plotpoint}\fi
\sbox{\plotpoint}{\rule[-0.200pt]{0.400pt}{0.400pt}}%
\begin{picture}(1500,900)(0,0)
\sbox{\plotpoint}{\rule[-0.200pt]{0.400pt}{0.400pt}}%
\put(221.0,123.0){\rule[-0.200pt]{4.818pt}{0.400pt}}
\put(201,123){\makebox(0,0)[r]{$0.01$}}
\put(1419.0,123.0){\rule[-0.200pt]{4.818pt}{0.400pt}}
\put(221.0,234.0){\rule[-0.200pt]{2.409pt}{0.400pt}}
\put(1429.0,234.0){\rule[-0.200pt]{2.409pt}{0.400pt}}
\put(221.0,299.0){\rule[-0.200pt]{2.409pt}{0.400pt}}
\put(1429.0,299.0){\rule[-0.200pt]{2.409pt}{0.400pt}}
\put(221.0,345.0){\rule[-0.200pt]{2.409pt}{0.400pt}}
\put(1429.0,345.0){\rule[-0.200pt]{2.409pt}{0.400pt}}
\put(221.0,381.0){\rule[-0.200pt]{2.409pt}{0.400pt}}
\put(1429.0,381.0){\rule[-0.200pt]{2.409pt}{0.400pt}}
\put(221.0,410.0){\rule[-0.200pt]{2.409pt}{0.400pt}}
\put(1429.0,410.0){\rule[-0.200pt]{2.409pt}{0.400pt}}
\put(221.0,434.0){\rule[-0.200pt]{2.409pt}{0.400pt}}
\put(1429.0,434.0){\rule[-0.200pt]{2.409pt}{0.400pt}}
\put(221.0,456.0){\rule[-0.200pt]{2.409pt}{0.400pt}}
\put(1429.0,456.0){\rule[-0.200pt]{2.409pt}{0.400pt}}
\put(221.0,475.0){\rule[-0.200pt]{2.409pt}{0.400pt}}
\put(1429.0,475.0){\rule[-0.200pt]{2.409pt}{0.400pt}}
\put(221.0,491.0){\rule[-0.200pt]{4.818pt}{0.400pt}}
\put(201,491){\makebox(0,0)[r]{$0.1$}}
\put(1419.0,491.0){\rule[-0.200pt]{4.818pt}{0.400pt}}
\put(221.0,602.0){\rule[-0.200pt]{2.409pt}{0.400pt}}
\put(1429.0,602.0){\rule[-0.200pt]{2.409pt}{0.400pt}}
\put(221.0,667.0){\rule[-0.200pt]{2.409pt}{0.400pt}}
\put(1429.0,667.0){\rule[-0.200pt]{2.409pt}{0.400pt}}
\put(221.0,713.0){\rule[-0.200pt]{2.409pt}{0.400pt}}
\put(1429.0,713.0){\rule[-0.200pt]{2.409pt}{0.400pt}}
\put(221.0,749.0){\rule[-0.200pt]{2.409pt}{0.400pt}}
\put(1429.0,749.0){\rule[-0.200pt]{2.409pt}{0.400pt}}
\put(221.0,778.0){\rule[-0.200pt]{2.409pt}{0.400pt}}
\put(1429.0,778.0){\rule[-0.200pt]{2.409pt}{0.400pt}}
\put(221.0,803.0){\rule[-0.200pt]{2.409pt}{0.400pt}}
\put(1429.0,803.0){\rule[-0.200pt]{2.409pt}{0.400pt}}
\put(221.0,824.0){\rule[-0.200pt]{2.409pt}{0.400pt}}
\put(1429.0,824.0){\rule[-0.200pt]{2.409pt}{0.400pt}}
\put(221.0,843.0){\rule[-0.200pt]{2.409pt}{0.400pt}}
\put(1429.0,843.0){\rule[-0.200pt]{2.409pt}{0.400pt}}
\put(221.0,860.0){\rule[-0.200pt]{4.818pt}{0.400pt}}
\put(201,860){\makebox(0,0)[r]{$1$}}
\put(1419.0,860.0){\rule[-0.200pt]{4.818pt}{0.400pt}}
\put(221.0,123.0){\rule[-0.200pt]{0.400pt}{4.818pt}}
\put(221,82){\makebox(0,0){$10$}}
\put(221.0,840.0){\rule[-0.200pt]{0.400pt}{4.818pt}}
\put(298.0,123.0){\rule[-0.200pt]{0.400pt}{2.409pt}}
\put(298.0,850.0){\rule[-0.200pt]{0.400pt}{2.409pt}}
\put(399.0,123.0){\rule[-0.200pt]{0.400pt}{2.409pt}}
\put(399.0,850.0){\rule[-0.200pt]{0.400pt}{2.409pt}}
\put(451.0,123.0){\rule[-0.200pt]{0.400pt}{2.409pt}}
\put(451.0,850.0){\rule[-0.200pt]{0.400pt}{2.409pt}}
\put(476.0,123.0){\rule[-0.200pt]{0.400pt}{4.818pt}}
\put(476,82){\makebox(0,0){$100$}}
\put(476.0,840.0){\rule[-0.200pt]{0.400pt}{4.818pt}}
\put(553.0,123.0){\rule[-0.200pt]{0.400pt}{2.409pt}}
\put(553.0,850.0){\rule[-0.200pt]{0.400pt}{2.409pt}}
\put(654.0,123.0){\rule[-0.200pt]{0.400pt}{2.409pt}}
\put(654.0,850.0){\rule[-0.200pt]{0.400pt}{2.409pt}}
\put(706.0,123.0){\rule[-0.200pt]{0.400pt}{2.409pt}}
\put(706.0,850.0){\rule[-0.200pt]{0.400pt}{2.409pt}}
\put(731.0,123.0){\rule[-0.200pt]{0.400pt}{4.818pt}}
\put(731,82){\makebox(0,0){$1000$}}
\put(731.0,840.0){\rule[-0.200pt]{0.400pt}{4.818pt}}
\put(808.0,123.0){\rule[-0.200pt]{0.400pt}{2.409pt}}
\put(808.0,850.0){\rule[-0.200pt]{0.400pt}{2.409pt}}
\put(909.0,123.0){\rule[-0.200pt]{0.400pt}{2.409pt}}
\put(909.0,850.0){\rule[-0.200pt]{0.400pt}{2.409pt}}
\put(961.0,123.0){\rule[-0.200pt]{0.400pt}{2.409pt}}
\put(961.0,850.0){\rule[-0.200pt]{0.400pt}{2.409pt}}
\put(986.0,123.0){\rule[-0.200pt]{0.400pt}{4.818pt}}
\put(986,82){\makebox(0,0){$10000$}}
\put(986.0,840.0){\rule[-0.200pt]{0.400pt}{4.818pt}}
\put(1062.0,123.0){\rule[-0.200pt]{0.400pt}{2.409pt}}
\put(1062.0,850.0){\rule[-0.200pt]{0.400pt}{2.409pt}}
\put(1164.0,123.0){\rule[-0.200pt]{0.400pt}{2.409pt}}
\put(1164.0,850.0){\rule[-0.200pt]{0.400pt}{2.409pt}}
\put(1216.0,123.0){\rule[-0.200pt]{0.400pt}{2.409pt}}
\put(1216.0,850.0){\rule[-0.200pt]{0.400pt}{2.409pt}}
\put(1241.0,123.0){\rule[-0.200pt]{0.400pt}{4.818pt}}
\put(1241,82){\makebox(0,0){$100000$}}
\put(1241.0,840.0){\rule[-0.200pt]{0.400pt}{4.818pt}}
\put(1317.0,123.0){\rule[-0.200pt]{0.400pt}{2.409pt}}
\put(1317.0,850.0){\rule[-0.200pt]{0.400pt}{2.409pt}}
\put(1419.0,123.0){\rule[-0.200pt]{0.400pt}{2.409pt}}
\put(1419.0,850.0){\rule[-0.200pt]{0.400pt}{2.409pt}}
\put(221.0,123.0){\rule[-0.200pt]{0.400pt}{177.543pt}}
\put(221.0,123.0){\rule[-0.200pt]{293.416pt}{0.400pt}}
\put(1439.0,123.0){\rule[-0.200pt]{0.400pt}{177.543pt}}
\put(221.0,860.0){\rule[-0.200pt]{293.416pt}{0.400pt}}
\put(40,491){\makebox(0,0){$\e$}}
\put(830,21){\makebox(0,0){Degrees of freedom ${N}$}}
\put(1272,804){\makebox(0,0)[r]{Uniform refinement}}
\put(1292.0,804.0){\rule[-0.200pt]{24.090pt}{0.400pt}}
\put(408,699){\usebox{\plotpoint}}
\multiput(408.00,697.92)(1.323,-0.499){113}{\rule{1.155pt}{0.120pt}}
\multiput(408.00,698.17)(150.602,-58.000){2}{\rule{0.578pt}{0.400pt}}
\multiput(561.00,639.92)(1.188,-0.499){127}{\rule{1.048pt}{0.120pt}}
\multiput(561.00,640.17)(151.825,-65.000){2}{\rule{0.524pt}{0.400pt}}
\multiput(715.00,574.92)(1.111,-0.499){135}{\rule{0.987pt}{0.120pt}}
\multiput(715.00,575.17)(150.952,-69.000){2}{\rule{0.493pt}{0.400pt}}
\multiput(868.00,505.92)(1.087,-0.499){139}{\rule{0.968pt}{0.120pt}}
\multiput(868.00,506.17)(151.992,-71.000){2}{\rule{0.484pt}{0.400pt}}
\multiput(1022.00,434.92)(1.065,-0.499){141}{\rule{0.950pt}{0.120pt}}
\multiput(1022.00,435.17)(151.028,-72.000){2}{\rule{0.475pt}{0.400pt}}
\multiput(1175.00,362.92)(1.057,-0.499){143}{\rule{0.944pt}{0.120pt}}
\multiput(1175.00,363.17)(152.041,-73.000){2}{\rule{0.472pt}{0.400pt}}
\put(408,699){\raisebox{-.8pt}{\makebox(0,0){$\Diamond$}}}
\put(561,641){\raisebox{-.8pt}{\makebox(0,0){$\Diamond$}}}
\put(715,576){\raisebox{-.8pt}{\makebox(0,0){$\Diamond$}}}
\put(868,507){\raisebox{-.8pt}{\makebox(0,0){$\Diamond$}}}
\put(1022,436){\raisebox{-.8pt}{\makebox(0,0){$\Diamond$}}}
\put(1175,364){\raisebox{-.8pt}{\makebox(0,0){$\Diamond$}}}
\put(1329,291){\raisebox{-.8pt}{\makebox(0,0){$\Diamond$}}}
\put(1342,804){\raisebox{-.8pt}{\makebox(0,0){$\Diamond$}}}
\put(1272,763){\makebox(0,0)[r]{Adaptive refinement based on $\eta$}}
\multiput(1292,763)(20.756,0.000){5}{\usebox{\plotpoint}}
\put(1392,763){\usebox{\plotpoint}}
\put(408,699){\usebox{\plotpoint}}
\multiput(408,699)(19.408,-7.357){8}{\usebox{\plotpoint}}
\multiput(561,641)(19.150,-8.003){4}{\usebox{\plotpoint}}
\multiput(628,613)(16.926,-12.012){2}{\usebox{\plotpoint}}
\multiput(659,591)(19.935,-5.778){3}{\usebox{\plotpoint}}
\put(733.66,567.32){\usebox{\plotpoint}}
\multiput(748,558)(18.296,-9.801){3}{\usebox{\plotpoint}}
\multiput(804,528)(18.426,-9.554){2}{\usebox{\plotpoint}}
\put(839.28,503.35){\usebox{\plotpoint}}
\multiput(845,496)(18.768,-8.863){2}{\usebox{\plotpoint}}
\multiput(881,479)(19.387,-7.413){2}{\usebox{\plotpoint}}
\put(926.95,452.21){\usebox{\plotpoint}}
\multiput(928,451)(18.739,-8.923){2}{\usebox{\plotpoint}}
\put(981.91,423.37){\usebox{\plotpoint}}
\put(998.26,410.92){\usebox{\plotpoint}}
\multiput(1011,395)(18.021,-10.298){2}{\usebox{\plotpoint}}
\multiput(1039,379)(18.664,-9.080){2}{\usebox{\plotpoint}}
\put(1082.83,353.71){\usebox{\plotpoint}}
\multiput(1091,345)(18.659,-9.090){2}{\usebox{\plotpoint}}
\put(1135.81,322.21){\usebox{\plotpoint}}
\multiput(1153,311)(13.442,-15.814){2}{\usebox{\plotpoint}}
\put(1183.43,283.25){\usebox{\plotpoint}}
\multiput(1196,276)(18.259,-9.870){2}{\usebox{\plotpoint}}
\put(1236.77,251.73){\usebox{\plotpoint}}
\multiput(1248,239)(18.367,-9.667){2}{\usebox{\plotpoint}}
\multiput(1286,219)(17.601,-11.000){2}{\usebox{\plotpoint}}
\put(1319.79,191.91){\usebox{\plotpoint}}
\put(1334.97,178.22){\usebox{\plotpoint}}
\multiput(1352,168)(17.840,-10.608){3}{\usebox{\plotpoint}}
\put(1389,146){\usebox{\plotpoint}}
\put(408,699){\makebox(0,0){$+$}}
\put(561,641){\makebox(0,0){$+$}}
\put(628,613){\makebox(0,0){$+$}}
\put(659,591){\makebox(0,0){$+$}}
\put(728,571){\makebox(0,0){$+$}}
\put(748,558){\makebox(0,0){$+$}}
\put(804,528){\makebox(0,0){$+$}}
\put(831,514){\makebox(0,0){$+$}}
\put(845,496){\makebox(0,0){$+$}}
\put(881,479){\makebox(0,0){$+$}}
\put(915,466){\makebox(0,0){$+$}}
\put(928,451){\makebox(0,0){$+$}}
\put(970,431){\makebox(0,0){$+$}}
\put(995,415){\makebox(0,0){$+$}}
\put(1011,395){\makebox(0,0){$+$}}
\put(1039,379){\makebox(0,0){$+$}}
\put(1076,361){\makebox(0,0){$+$}}
\put(1091,345){\makebox(0,0){$+$}}
\put(1130,326){\makebox(0,0){$+$}}
\put(1153,311){\makebox(0,0){$+$}}
\put(1170,291){\makebox(0,0){$+$}}
\put(1196,276){\makebox(0,0){$+$}}
\put(1233,256){\makebox(0,0){$+$}}
\put(1248,239){\makebox(0,0){$+$}}
\put(1286,219){\makebox(0,0){$+$}}
\put(1310,204){\makebox(0,0){$+$}}
\put(1327,183){\makebox(0,0){$+$}}
\put(1352,168){\makebox(0,0){$+$}}
\put(1389,146){\makebox(0,0){$+$}}
\put(1342,763){\makebox(0,0){$+$}}
\put(221.0,123.0){\rule[-0.200pt]{0.400pt}{177.543pt}}
\put(221.0,123.0){\rule[-0.200pt]{293.416pt}{0.400pt}}
\put(1439.0,123.0){\rule[-0.200pt]{0.400pt}{177.543pt}}
\put(221.0,860.0){\rule[-0.200pt]{293.416pt}{0.400pt}}
\end{picture}

\medskip
{\bf Figure 5.4}. Example 3: Global error for the uniform and adaptive refinements, with $\omega=1.0$
\end{center}
%

\newpage
\begin{center}
{\bf Table 5.18}. Example 3: hybrid adaptive refinement with $\omega=1.0$
\begin{tabular}{|c||c|c||c|c||c|c||c|c||c|} 
\hline  
$N$ 	 & 	 $\e(u)$ 	 & 	     $r(u)$ 	 &$\e(\bm{\sigma})$ 	 & 	$r(\bm{\sigma})$ & 	 $\e$      	 & 	        $r$ 	 &  	  $\e/\eta$ 	 \\\hline  
    54 	 & 	 2.918e-01 	 & 	      ----- 	 & 	 2.214e-01 	 & 	       -----	 & 	 3.662e-01 	 & 	      ----- 	 &  	     0.1911 	 \\ 
   216 	 & 	 2.159e-01 	 & 	     0.4346 	 & 	 1.337e-01 	 & 	     0.7274 	 & 	 2.539e-01 	 & 	     0.5284 	 &  	     0.1911 	 \\ 
   396 	 & 	 1.852e-01 	 & 	     0.5055 	 & 	 1.053e-01 	 & 	     0.7868 	 & 	 2.131e-01 	 & 	     0.5788 	 &  	     0.1874 	 \\ 
   522 	 & 	 1.616e-01 	 & 	     0.9852 	 & 	 9.330e-02 	 & 	     0.8786 	 & 	 1.866e-01 	 & 	     0.9588 	 &  	     0.1961 	 \\ 
   972 	 & 	 1.455e-01 	 & 	     0.3389 	 & 	 7.630e-02 	 & 	     0.6472 	 & 	 1.643e-01 	 & 	     0.4106 	 &  	     0.2035 	 \\ 
  1170 	 & 	 1.346e-01 	 & 	     0.8402 	 & 	 6.947e-02 	 & 	     1.0118	 & 	 1.514e-01 	 & 	     0.8768 	 &  	     0.2102 	 \\ 
  1944 	 & 	 1.119e-01 	 & 	     0.7271 	 & 	 5.717e-02 	 & 	     0.7677 	 & 	 1.257e-01 	 & 	     0.7355 	 &  	     0.2206 	 \\ 
  2466 	 & 	 1.014e-01 	 & 	     0.8291 	 & 	 5.385e-02 	 & 	     0.5030 	 & 	 1.148e-01 	 & 	     0.7595 	 &  	     0.2147 	 \\ 
  2808 	 & 	 9.128e-02 	 & 	     1.6173 	 & 	 4.757e-02 	 & 	     1.9085 	 & 	 1.029e-01 	 & 	     1.6804 	 &  	     0.2214 	 \\ 
  3879 	 & 	 8.223e-02 	 & 	     0.6467 	 & 	 4.229e-02 	 & 	     0.7288 	 & 	 9.246e-02 	 & 	     0.6641 	 &  	     0.2175 	 \\ 
  5274 	 & 	 7.613e-02 	 & 	     0.5016 	 & 	 3.860e-02 	 & 	     0.5933 	 & 	 8.536e-02 	 & 	     0.5206 	 &  	     0.2166 	 \\ 
  5958 	 & 	 6.952e-02 	 & 	     1.4879 	 & 	 3.478e-02 	 & 	     1.7080 	 & 	 7.774e-02 	 & 	     1.5324 	 &  	     0.2276 	 \\ 
  8676 	 & 	 6.167e-02 	 & 	     0.6376 	 & 	 3.015e-02 	 & 	     0.7605 	 & 	 6.865e-02 	 & 	     0.6617 	 &  	     0.2288 	 \\ 
 10854 	 & 	 5.568e-02 	 & 	     0.9127 	 & 	 2.698e-02 	 & 	     0.9938 	 & 	 6.187e-02 	 & 	     0.9282 	 &  	     0.2266 	 \\ 
 12564 	 & 	 4.928e-02 	 & 	     1.6706 	 & 	 2.348e-02 	 & 	     1.8985 	 & 	 5.458e-02 	 & 	     1.7133 	 &  	     0.2317 	 \\ 
 16227 	 & 	 4.478e-02 	 & 	     0.7481 	 & 	 2.147e-02 	 & 	     0.6976 	 & 	 4.966e-02 	 & 	     0.7387 	 &  	     0.2248 	 \\ 
 22662 	 & 	 3.965e-02 	 & 	     0.7290 	 & 	 1.962e-02 	 & 	     0.5405 	 & 	 4.424e-02 	 & 	     0.6928 	 &  	     0.2248 	 \\ 
 25794 	 & 	 3.602e-02 	 & 	     1.4803 	 & 	 1.761e-02 	 & 	     1.6728 	 & 	 4.010e-02 	 & 	     1.5178 	 &  	     0.2300 	 \\ 
 36684 	 & 	 3.199e-02 	 & 	     0.6750 	 & 	 1.556e-02 	 & 	     0.7024 	 & 	 3.557e-02 	 & 	     0.6802 	 &  	     0.2313 	 \\ 
 45513 	 & 	 2.922e-02 	 & 	     0.8390 	 & 	 1.408e-02 	 & 	     0.9262 	 & 	 3.244e-02 	 & 	     0.8556 	 &  	     0.2313 	 \\ 
 52974 	 & 	 2.565e-02 	 & 	     1.7156 	 & 	 1.266e-02 	 & 	     1.4056 	 & 	 2.860e-02 	 & 	     1.6561 	 &  	     0.2385 	 \\ 
 67023 	 & 	 2.330e-02 	 & 	     0.8173 	 & 	 1.140e-02 	 & 	     0.8861 	 & 	 2.594e-02 	 & 	     0.8307 	 &  	     0.2320 	 \\ 
 93411 	 & 	 2.049e-02 	 & 	     0.7751 	 & 	 1.021e-02 	 & 	     0.6641 	 & 	 2.289e-02 	 & 	     0.7534 	 &  	     0.2306 	 \\ 
107127 	 & 	 1.853e-02 	 & 	     1.4647 	 & 	 9.185e-03 	 & 	     1.5486 	 & 	 2.068e-02 	 & 	     1.4813 	 &  	     0.2343 	 \\ 
151002 	 & 	 1.632e-02 	 & 	     0.7391 	 & 	 8.116e-03 	 & 	     0.7209 	 & 	 1.823e-02 	 & 	     0.7355 	 &  	     0.2359 	 \\ 
186462 	 & 	 1.494e-02 	 & 	     0.8394 	 & 	 7.313e-03 	 & 	     0.9875 	 & 	 1.664e-02 	 & 	     0.8684 	 &  	     0.2349 	 \\ 
217863 	 & 	 1.301e-02 	 & 	     1.7801 	 & 	 6.421e-03 	 & 	     1.6707 	 & 	 1.451e-02 	 & 	     1.7588 	 &  	     0.2417 	 \\ 
272781 	 & 	 1.190e-02 	 & 	     0.7941 	 & 	 5.880e-03 	 & 	     0.7830 	 & 	 1.327e-02 	 & 	     0.7919 	 &  	     0.2342 	 \\ 
381510 	 & 	 1.032e-02 	 & 	     0.8472 	 & 	 5.207e-03 	 & 	     0.7253 	 & 	 1.156e-02 	 & 	     0.8229 	 &  	     0.2338 	 \\ 
\hline                                                                             
\end{tabular}                                                                      
\end{center}                                                                       
%
%
%
\begin{center}
\setlength{\unitlength}{0.240900pt}
\ifx\plotpoint\undefined\newsavebox{\plotpoint}\fi
\sbox{\plotpoint}{\rule[-0.200pt]{0.400pt}{0.400pt}}%
\begin{picture}(1500,900)(0,0)
\sbox{\plotpoint}{\rule[-0.200pt]{0.400pt}{0.400pt}}%
\put(221.0,123.0){\rule[-0.200pt]{4.818pt}{0.400pt}}
\put(201,123){\makebox(0,0)[r]{$0.01$}}
\put(1419.0,123.0){\rule[-0.200pt]{4.818pt}{0.400pt}}
\put(221.0,208.0){\rule[-0.200pt]{2.409pt}{0.400pt}}
\put(1429.0,208.0){\rule[-0.200pt]{2.409pt}{0.400pt}}
\put(221.0,258.0){\rule[-0.200pt]{2.409pt}{0.400pt}}
\put(1429.0,258.0){\rule[-0.200pt]{2.409pt}{0.400pt}}
\put(221.0,294.0){\rule[-0.200pt]{2.409pt}{0.400pt}}
\put(1429.0,294.0){\rule[-0.200pt]{2.409pt}{0.400pt}}
\put(221.0,321.0){\rule[-0.200pt]{2.409pt}{0.400pt}}
\put(1429.0,321.0){\rule[-0.200pt]{2.409pt}{0.400pt}}
\put(221.0,343.0){\rule[-0.200pt]{2.409pt}{0.400pt}}
\put(1429.0,343.0){\rule[-0.200pt]{2.409pt}{0.400pt}}
\put(221.0,362.0){\rule[-0.200pt]{2.409pt}{0.400pt}}
\put(1429.0,362.0){\rule[-0.200pt]{2.409pt}{0.400pt}}
\put(221.0,379.0){\rule[-0.200pt]{2.409pt}{0.400pt}}
\put(1429.0,379.0){\rule[-0.200pt]{2.409pt}{0.400pt}}
\put(221.0,393.0){\rule[-0.200pt]{2.409pt}{0.400pt}}
\put(1429.0,393.0){\rule[-0.200pt]{2.409pt}{0.400pt}}
\put(221.0,406.0){\rule[-0.200pt]{4.818pt}{0.400pt}}
\put(201,406){\makebox(0,0)[r]{$0.1$}}
\put(1419.0,406.0){\rule[-0.200pt]{4.818pt}{0.400pt}}
\put(221.0,491.0){\rule[-0.200pt]{2.409pt}{0.400pt}}
\put(1429.0,491.0){\rule[-0.200pt]{2.409pt}{0.400pt}}
\put(221.0,541.0){\rule[-0.200pt]{2.409pt}{0.400pt}}
\put(1429.0,541.0){\rule[-0.200pt]{2.409pt}{0.400pt}}
\put(221.0,577.0){\rule[-0.200pt]{2.409pt}{0.400pt}}
\put(1429.0,577.0){\rule[-0.200pt]{2.409pt}{0.400pt}}
\put(221.0,604.0){\rule[-0.200pt]{2.409pt}{0.400pt}}
\put(1429.0,604.0){\rule[-0.200pt]{2.409pt}{0.400pt}}
\put(221.0,627.0){\rule[-0.200pt]{2.409pt}{0.400pt}}
\put(1429.0,627.0){\rule[-0.200pt]{2.409pt}{0.400pt}}
\put(221.0,646.0){\rule[-0.200pt]{2.409pt}{0.400pt}}
\put(1429.0,646.0){\rule[-0.200pt]{2.409pt}{0.400pt}}
\put(221.0,662.0){\rule[-0.200pt]{2.409pt}{0.400pt}}
\put(1429.0,662.0){\rule[-0.200pt]{2.409pt}{0.400pt}}
\put(221.0,677.0){\rule[-0.200pt]{2.409pt}{0.400pt}}
\put(1429.0,677.0){\rule[-0.200pt]{2.409pt}{0.400pt}}
\put(221.0,689.0){\rule[-0.200pt]{4.818pt}{0.400pt}}
\put(201,689){\makebox(0,0)[r]{$1$}}
\put(1419.0,689.0){\rule[-0.200pt]{4.818pt}{0.400pt}}
\put(221.0,775.0){\rule[-0.200pt]{2.409pt}{0.400pt}}
\put(1429.0,775.0){\rule[-0.200pt]{2.409pt}{0.400pt}}
\put(221.0,825.0){\rule[-0.200pt]{2.409pt}{0.400pt}}
\put(1429.0,825.0){\rule[-0.200pt]{2.409pt}{0.400pt}}
\put(221.0,860.0){\rule[-0.200pt]{2.409pt}{0.400pt}}
\put(1429.0,860.0){\rule[-0.200pt]{2.409pt}{0.400pt}}
\put(221.0,123.0){\rule[-0.200pt]{0.400pt}{4.818pt}}
\put(221,82){\makebox(0,0){$10$}}
\put(221.0,840.0){\rule[-0.200pt]{0.400pt}{4.818pt}}
\put(298.0,123.0){\rule[-0.200pt]{0.400pt}{2.409pt}}
\put(298.0,850.0){\rule[-0.200pt]{0.400pt}{2.409pt}}
\put(399.0,123.0){\rule[-0.200pt]{0.400pt}{2.409pt}}
\put(399.0,850.0){\rule[-0.200pt]{0.400pt}{2.409pt}}
\put(451.0,123.0){\rule[-0.200pt]{0.400pt}{2.409pt}}
\put(451.0,850.0){\rule[-0.200pt]{0.400pt}{2.409pt}}
\put(476.0,123.0){\rule[-0.200pt]{0.400pt}{4.818pt}}
\put(476,82){\makebox(0,0){$100$}}
\put(476.0,840.0){\rule[-0.200pt]{0.400pt}{4.818pt}}
\put(553.0,123.0){\rule[-0.200pt]{0.400pt}{2.409pt}}
\put(553.0,850.0){\rule[-0.200pt]{0.400pt}{2.409pt}}
\put(654.0,123.0){\rule[-0.200pt]{0.400pt}{2.409pt}}
\put(654.0,850.0){\rule[-0.200pt]{0.400pt}{2.409pt}}
\put(706.0,123.0){\rule[-0.200pt]{0.400pt}{2.409pt}}
\put(706.0,850.0){\rule[-0.200pt]{0.400pt}{2.409pt}}
\put(731.0,123.0){\rule[-0.200pt]{0.400pt}{4.818pt}}
\put(731,82){\makebox(0,0){$1000$}}
\put(731.0,840.0){\rule[-0.200pt]{0.400pt}{4.818pt}}
\put(808.0,123.0){\rule[-0.200pt]{0.400pt}{2.409pt}}
\put(808.0,850.0){\rule[-0.200pt]{0.400pt}{2.409pt}}
\put(909.0,123.0){\rule[-0.200pt]{0.400pt}{2.409pt}}
\put(909.0,850.0){\rule[-0.200pt]{0.400pt}{2.409pt}}
\put(961.0,123.0){\rule[-0.200pt]{0.400pt}{2.409pt}}
\put(961.0,850.0){\rule[-0.200pt]{0.400pt}{2.409pt}}
\put(986.0,123.0){\rule[-0.200pt]{0.400pt}{4.818pt}}
\put(986,82){\makebox(0,0){$10000$}}
\put(986.0,840.0){\rule[-0.200pt]{0.400pt}{4.818pt}}
\put(1062.0,123.0){\rule[-0.200pt]{0.400pt}{2.409pt}}
\put(1062.0,850.0){\rule[-0.200pt]{0.400pt}{2.409pt}}
\put(1164.0,123.0){\rule[-0.200pt]{0.400pt}{2.409pt}}
\put(1164.0,850.0){\rule[-0.200pt]{0.400pt}{2.409pt}}
\put(1216.0,123.0){\rule[-0.200pt]{0.400pt}{2.409pt}}
\put(1216.0,850.0){\rule[-0.200pt]{0.400pt}{2.409pt}}
\put(1241.0,123.0){\rule[-0.200pt]{0.400pt}{4.818pt}}
\put(1241,82){\makebox(0,0){$100000$}}
\put(1241.0,840.0){\rule[-0.200pt]{0.400pt}{4.818pt}}
\put(1317.0,123.0){\rule[-0.200pt]{0.400pt}{2.409pt}}
\put(1317.0,850.0){\rule[-0.200pt]{0.400pt}{2.409pt}}
\put(1419.0,123.0){\rule[-0.200pt]{0.400pt}{2.409pt}}
\put(1419.0,850.0){\rule[-0.200pt]{0.400pt}{2.409pt}}
\put(221.0,123.0){\rule[-0.200pt]{0.400pt}{177.543pt}}
\put(221.0,123.0){\rule[-0.200pt]{293.416pt}{0.400pt}}
\put(1439.0,123.0){\rule[-0.200pt]{0.400pt}{177.543pt}}
\put(221.0,860.0){\rule[-0.200pt]{293.416pt}{0.400pt}}
\put(40,491){\makebox(0,0){$\e$}}
\put(830,21){\makebox(0,0){Degrees of freedom ${N}$}}
\put(1272,755){\makebox(0,0)[r]{Uniform refinement}}
\put(1292.0,755.0){\rule[-0.200pt]{24.090pt}{0.400pt}}
\put(408,666){\usebox{\plotpoint}}
\multiput(408.00,664.92)(1.279,-0.499){117}{\rule{1.120pt}{0.120pt}}
\multiput(408.00,665.17)(150.675,-60.000){2}{\rule{0.560pt}{0.400pt}}
\multiput(561.00,604.92)(0.647,-0.499){235}{\rule{0.618pt}{0.120pt}}
\multiput(561.00,605.17)(152.718,-119.000){2}{\rule{0.309pt}{0.400pt}}
\multiput(715.00,485.92)(1.145,-0.499){131}{\rule{1.013pt}{0.120pt}}
\multiput(715.00,486.17)(150.897,-67.000){2}{\rule{0.507pt}{0.400pt}}
\multiput(868.00,418.92)(1.380,-0.499){109}{\rule{1.200pt}{0.120pt}}
\multiput(868.00,419.17)(151.509,-56.000){2}{\rule{0.600pt}{0.400pt}}
\multiput(1022.00,362.92)(1.371,-0.499){109}{\rule{1.193pt}{0.120pt}}
\multiput(1022.00,363.17)(150.524,-56.000){2}{\rule{0.596pt}{0.400pt}}
\multiput(1175.00,306.92)(1.380,-0.499){109}{\rule{1.200pt}{0.120pt}}
\multiput(1175.00,307.17)(151.509,-56.000){2}{\rule{0.600pt}{0.400pt}}
\put(408,666){\raisebox{-.8pt}{\makebox(0,0){$\Diamond$}}}
\put(561,606){\raisebox{-.8pt}{\makebox(0,0){$\Diamond$}}}
\put(715,487){\raisebox{-.8pt}{\makebox(0,0){$\Diamond$}}}
\put(868,420){\raisebox{-.8pt}{\makebox(0,0){$\Diamond$}}}
\put(1022,364){\raisebox{-.8pt}{\makebox(0,0){$\Diamond$}}}
\put(1175,308){\raisebox{-.8pt}{\makebox(0,0){$\Diamond$}}}
\put(1329,252){\raisebox{-.8pt}{\makebox(0,0){$\Diamond$}}}
\put(1342,755){\raisebox{-.8pt}{\makebox(0,0){$\Diamond$}}}
\put(1272,714){\makebox(0,0)[r]{Adaptive refinement based on $\eta$}}
\multiput(1292,714)(20.756,0.000){5}{\usebox{\plotpoint}}
\put(1392,714){\usebox{\plotpoint}}
\put(408,666){\usebox{\plotpoint}}
\multiput(408,666)(19.323,-7.578){8}{\usebox{\plotpoint}}
\multiput(561,606)(16.424,-12.691){10}{\usebox{\plotpoint}}
\multiput(715,487)(19.012,-8.326){8}{\usebox{\plotpoint}}
\multiput(868,420)(19.506,-7.093){8}{\usebox{\plotpoint}}
\put(1022.81,348.29){\usebox{\plotpoint}}
\put(1033.80,331.30){\usebox{\plotpoint}}
\put(1048.25,316.52){\usebox{\plotpoint}}
\put(1064.20,304.20){\usebox{\plotpoint}}
\put(1080.86,292.61){\usebox{\plotpoint}}
\put(1099.69,283.89){\usebox{\plotpoint}}
\multiput(1115,274)(19.271,-7.708){2}{\usebox{\plotpoint}}
\put(1155.90,257.99){\usebox{\plotpoint}}
\put(1172.91,246.78){\usebox{\plotpoint}}
\put(1190.92,236.95){\usebox{\plotpoint}}
\multiput(1200,233)(19.634,-6.732){2}{\usebox{\plotpoint}}
\put(1247.51,212.07){\usebox{\plotpoint}}
\multiput(1249,211)(19.026,-8.294){2}{\usebox{\plotpoint}}
\put(1304.14,186.28){\usebox{\plotpoint}}
\put(1320.59,174.01){\usebox{\plotpoint}}
\put(1337.96,163.18){\usebox{\plotpoint}}
\multiput(1352,157)(18.860,-8.665){2}{\usebox{\plotpoint}}
\put(1389,140){\usebox{\plotpoint}}
\put(408,666){\makebox(0,0){$+$}}
\put(561,606){\makebox(0,0){$+$}}
\put(715,487){\makebox(0,0){$+$}}
\put(868,420){\makebox(0,0){$+$}}
\put(1022,364){\makebox(0,0){$+$}}
\put(1022,351){\makebox(0,0){$+$}}
\put(1025,341){\makebox(0,0){$+$}}
\put(1032,334){\makebox(0,0){$+$}}
\put(1036,328){\makebox(0,0){$+$}}
\put(1052,313){\makebox(0,0){$+$}}
\put(1061,309){\makebox(0,0){$+$}}
\put(1067,300){\makebox(0,0){$+$}}
\put(1082,292){\makebox(0,0){$+$}}
\put(1106,281){\makebox(0,0){$+$}}
\put(1115,274){\makebox(0,0){$+$}}
\put(1145,262){\makebox(0,0){$+$}}
\put(1164,255){\makebox(0,0){$+$}}
\put(1177,243){\makebox(0,0){$+$}}
\put(1200,233){\makebox(0,0){$+$}}
\put(1235,221){\makebox(0,0){$+$}}
\put(1249,211){\makebox(0,0){$+$}}
\put(1288,194){\makebox(0,0){$+$}}
\put(1311,183){\makebox(0,0){$+$}}
\put(1327,168){\makebox(0,0){$+$}}
\put(1352,157){\makebox(0,0){$+$}}
\put(1389,140){\makebox(0,0){$+$}}
\put(1342,714){\makebox(0,0){$+$}}
\put(221.0,123.0){\rule[-0.200pt]{0.400pt}{177.543pt}}
\put(221.0,123.0){\rule[-0.200pt]{293.416pt}{0.400pt}}
\put(1439.0,123.0){\rule[-0.200pt]{0.400pt}{177.543pt}}
\put(221.0,860.0){\rule[-0.200pt]{293.416pt}{0.400pt}}
\end{picture}

\medskip
{\bf Figure 5.5}. Example 3: Global error for the uniform and adaptive refinements, with $\omega=10.0$
\end{center}
%

%
%
                                                                           
\newpage                                                                           
\begin{center}                                                                     
{\bf Table 5.19}. Example 3: hybrid adaptive refinement with $\omega=10.0$         
\begin{tabular}{|c||c|c||c|c||c|c||c|}                                             
\hline                                                                             
$N$ 	 & 	 $\e(u)$ 	 & 	 $r(u)$ 	 &     $\e(\bm{\sigma})$ &      $r(\bm{\sigma})$ & 	   $\e$ 	 & 	        $r$ 	 & 	 $\e/\eta$ 	 \\\hline  
    54 	 & 	 5.118e-01 	 & 	      ----- 	 & 	 6.493e-01 	 & 	       -----	 & 	 8.268e-01 	 & 	      ----- 	 & 		     0.0747 	 \\ 
   216 	 & 	 3.680e-01 	 & 	     0.4760 	 & 	 3.521e-01 	 & 	     0.8830 	 & 	 5.093e-01 	 & 	     0.6990 	 & 		     0.0747 	 \\ 
   864 	 & 	 1.614e-01 	 & 	     1.1891 	 & 	 1.042e-01 	 & 	     1.7561 	 & 	 1.921e-01 	 & 	     1.4065 	 & 		     0.1592 	 \\ 
  3456 	 & 	 9.701e-02 	 & 	     0.7342 	 & 	 5.493e-02 	 & 	     0.9243 	 & 	 1.115e-01 	 & 	     0.7851 	 & 		     0.1997 	 \\ 
 13824 	 & 	 6.192e-02 	 & 	     0.6477 	 & 	 3.446e-02 	 & 	     0.6725 	 & 	 7.087e-02 	 & 	     0.6536 	 & 		     0.1818 	 \\ 
 13878 	 & 	 5.678e-02 	 & 	    44.5216 	 & 	 2.910e-02 	 & 	    86.7418 	 & 	 6.380e-02 	 & 	    53.8964 	 & 		     0.1705 	 \\ 
 14238 	 & 	 5.240e-02 	 & 	     6.2688 	 & 	 2.698e-02 	 & 	     5.9135 	 & 	 5.893e-02 	 & 	     6.1946 	 & 		     0.1616 	 \\ 
 15219 	 & 	 5.031e-02 	 & 	     1.2211 	 & 	 2.402e-02 	 & 	     3.4895 	 & 	 5.575e-02 	 & 	     1.6689 	 & 		     0.1585 	 \\ 
 15732 	 & 	 4.789e-02 	 & 	     2.9773 	 & 	 2.288e-02 	 & 	     2.9285 	 & 	 5.307e-02 	 & 	     2.9682 	 & 		     0.1714 	 \\ 
 18162 	 & 	 4.209e-02 	 & 	     1.7974 	 & 	 2.068e-02 	 & 	     1.4101 	 & 	 4.689e-02 	 & 	     1.7238 	 & 		     0.1802 	 \\ 
 19800 	 & 	 4.077e-02 	 & 	     0.7344 	 & 	 2.023e-02 	 & 	     0.5030 	 & 	 4.552e-02 	 & 	     0.6890 	 & 		     0.1806 	 \\ 
 20898 	 & 	 3.808e-02 	 & 	     2.5292 	 & 	 1.843e-02 	 & 	     3.4467 	 & 	 4.231e-02 	 & 	     2.7069 	 & 		     0.1943 	 \\ 
 23814 	 & 	 3.576e-02 	 & 	     0.9616 	 & 	 1.684e-02 	 & 	     1.3861 	 & 	 3.953e-02 	 & 	     1.0404 	 & 		     0.1935 	 \\ 
 29538 	 & 	 3.258e-02 	 & 	     0.8666 	 & 	 1.539e-02 	 & 	     0.8359 	 & 	 3.603e-02 	 & 	     0.8610 	 & 		     0.1944 	 \\ 
 32004 	 & 	 3.099e-02 	 & 	     1.2446 	 & 	 1.433e-02 	 & 	     1.7798 	 & 	 3.414e-02 	 & 	     1.3405 	 & 		     0.2024 	 \\ 
 42012 	 & 	 2.805e-02 	 & 	     0.7329 	 & 	 1.286e-02 	 & 	     0.7975 	 & 	 3.086e-02 	 & 	     0.7442 	 & 		     0.2085 	 \\ 
 49941 	 & 	 2.649e-02 	 & 	     0.6629 	 & 	 1.223e-02 	 & 	     0.5777 	 & 	 2.918e-02 	 & 	     0.6480 	 & 		     0.2102 	 \\ 
 56385 	 & 	 2.396e-02 	 & 	     1.6524 	 & 	 1.118e-02 	 & 	     1.4819 	 & 	 2.644e-02 	 & 	     1.6222 	 & 		     0.2209 	 \\ 
 69525 	 & 	 2.225e-02 	 & 	     0.7070 	 & 	 1.033e-02 	 & 	     0.7525 	 & 	 2.453e-02 	 & 	     0.7151 	 & 		     0.2186 	 \\ 
 95067 	 & 	 2.002e-02 	 & 	     0.6762 	 & 	 9.381e-03 	 & 	     0.6163 	 & 	 2.211e-02 	 & 	     0.6655 	 & 		     0.2198 	 \\ 
107829 	 & 	 1.850e-02 	 & 	     1.2523 	 & 	 8.711e-03 	 & 	     1.1776 	 & 	 2.045e-02 	 & 	     1.2388 	 & 		     0.2268 	 \\ 
152721 	 & 	 1.605e-02 	 & 	     0.8146 	 & 	 7.615e-03 	 & 	     0.7725 	 & 	 1.777e-02 	 & 	     0.8069 	 & 		     0.2315 	 \\ 
188208 	 & 	 1.471e-02 	 & 	     0.8365 	 & 	 7.045e-03 	 & 	     0.7451 	 & 	 1.631e-02 	 & 	     0.8196 	 & 		     0.2294 	 \\ 
218754 	 & 	 1.293e-02 	 & 	     1.7198 	 & 	 6.331e-03 	 & 	     1.4208 	 & 	 1.439e-02 	 & 	     1.6630 	 & 		     0.2369 	 \\ 
273780 	 & 	 1.184e-02 	 & 	     0.7842 	 & 	 5.865e-03 	 & 	     0.6810 	 & 	 1.321e-02 	 & 	     0.7641 	 & 		     0.2318 	 \\ 
382905 	 & 	 1.026e-02 	 & 	     0.8524 	 & 	 5.190e-03 	 & 	     0.7291 	 & 	 1.150e-02 	 & 	     0.8277 	 & 		     0.2323 	 \\ 
\hline  
\end{tabular} 
\end{center}

\begin{center}
\setlength{\unitlength}{0.240900pt}
\ifx\plotpoint\undefined\newsavebox{\plotpoint}\fi
\sbox{\plotpoint}{\rule[-0.200pt]{0.400pt}{0.400pt}}%
\begin{picture}(1500,900)(0,0)
\sbox{\plotpoint}{\rule[-0.200pt]{0.400pt}{0.400pt}}%
\put(221.0,123.0){\rule[-0.200pt]{4.818pt}{0.400pt}}
\put(201,123){\makebox(0,0)[r]{$0.01$}}
\put(1419.0,123.0){\rule[-0.200pt]{4.818pt}{0.400pt}}
\put(221.0,193.0){\rule[-0.200pt]{2.409pt}{0.400pt}}
\put(1429.0,193.0){\rule[-0.200pt]{2.409pt}{0.400pt}}
\put(221.0,234.0){\rule[-0.200pt]{2.409pt}{0.400pt}}
\put(1429.0,234.0){\rule[-0.200pt]{2.409pt}{0.400pt}}
\put(221.0,263.0){\rule[-0.200pt]{2.409pt}{0.400pt}}
\put(1429.0,263.0){\rule[-0.200pt]{2.409pt}{0.400pt}}
\put(221.0,285.0){\rule[-0.200pt]{2.409pt}{0.400pt}}
\put(1429.0,285.0){\rule[-0.200pt]{2.409pt}{0.400pt}}
\put(221.0,304.0){\rule[-0.200pt]{2.409pt}{0.400pt}}
\put(1429.0,304.0){\rule[-0.200pt]{2.409pt}{0.400pt}}
\put(221.0,319.0){\rule[-0.200pt]{2.409pt}{0.400pt}}
\put(1429.0,319.0){\rule[-0.200pt]{2.409pt}{0.400pt}}
\put(221.0,333.0){\rule[-0.200pt]{2.409pt}{0.400pt}}
\put(1429.0,333.0){\rule[-0.200pt]{2.409pt}{0.400pt}}
\put(221.0,344.0){\rule[-0.200pt]{2.409pt}{0.400pt}}
\put(1429.0,344.0){\rule[-0.200pt]{2.409pt}{0.400pt}}
\put(221.0,355.0){\rule[-0.200pt]{4.818pt}{0.400pt}}
\put(201,355){\makebox(0,0)[r]{$0.1$}}
\put(1419.0,355.0){\rule[-0.200pt]{4.818pt}{0.400pt}}
\put(221.0,425.0){\rule[-0.200pt]{2.409pt}{0.400pt}}
\put(1429.0,425.0){\rule[-0.200pt]{2.409pt}{0.400pt}}
\put(221.0,466.0){\rule[-0.200pt]{2.409pt}{0.400pt}}
\put(1429.0,466.0){\rule[-0.200pt]{2.409pt}{0.400pt}}
\put(221.0,495.0){\rule[-0.200pt]{2.409pt}{0.400pt}}
\put(1429.0,495.0){\rule[-0.200pt]{2.409pt}{0.400pt}}
\put(221.0,517.0){\rule[-0.200pt]{2.409pt}{0.400pt}}
\put(1429.0,517.0){\rule[-0.200pt]{2.409pt}{0.400pt}}
\put(221.0,536.0){\rule[-0.200pt]{2.409pt}{0.400pt}}
\put(1429.0,536.0){\rule[-0.200pt]{2.409pt}{0.400pt}}
\put(221.0,551.0){\rule[-0.200pt]{2.409pt}{0.400pt}}
\put(1429.0,551.0){\rule[-0.200pt]{2.409pt}{0.400pt}}
\put(221.0,565.0){\rule[-0.200pt]{2.409pt}{0.400pt}}
\put(1429.0,565.0){\rule[-0.200pt]{2.409pt}{0.400pt}}
\put(221.0,576.0){\rule[-0.200pt]{2.409pt}{0.400pt}}
\put(1429.0,576.0){\rule[-0.200pt]{2.409pt}{0.400pt}}
\put(221.0,587.0){\rule[-0.200pt]{4.818pt}{0.400pt}}
\put(201,587){\makebox(0,0)[r]{$1$}}
\put(1419.0,587.0){\rule[-0.200pt]{4.818pt}{0.400pt}}
\put(221.0,657.0){\rule[-0.200pt]{2.409pt}{0.400pt}}
\put(1429.0,657.0){\rule[-0.200pt]{2.409pt}{0.400pt}}
\put(221.0,698.0){\rule[-0.200pt]{2.409pt}{0.400pt}}
\put(1429.0,698.0){\rule[-0.200pt]{2.409pt}{0.400pt}}
\put(221.0,727.0){\rule[-0.200pt]{2.409pt}{0.400pt}}
\put(1429.0,727.0){\rule[-0.200pt]{2.409pt}{0.400pt}}
\put(221.0,749.0){\rule[-0.200pt]{2.409pt}{0.400pt}}
\put(1429.0,749.0){\rule[-0.200pt]{2.409pt}{0.400pt}}
\put(221.0,768.0){\rule[-0.200pt]{2.409pt}{0.400pt}}
\put(1429.0,768.0){\rule[-0.200pt]{2.409pt}{0.400pt}}
\put(221.0,783.0){\rule[-0.200pt]{2.409pt}{0.400pt}}
\put(1429.0,783.0){\rule[-0.200pt]{2.409pt}{0.400pt}}
\put(221.0,797.0){\rule[-0.200pt]{2.409pt}{0.400pt}}
\put(1429.0,797.0){\rule[-0.200pt]{2.409pt}{0.400pt}}
\put(221.0,809.0){\rule[-0.200pt]{2.409pt}{0.400pt}}
\put(1429.0,809.0){\rule[-0.200pt]{2.409pt}{0.400pt}}
\put(221.0,819.0){\rule[-0.200pt]{4.818pt}{0.400pt}}
\put(201,819){\makebox(0,0)[r]{$10$}}
\put(1419.0,819.0){\rule[-0.200pt]{4.818pt}{0.400pt}}
\put(221.0,123.0){\rule[-0.200pt]{0.400pt}{4.818pt}}
\put(221,82){\makebox(0,0){$10$}}
\put(221.0,840.0){\rule[-0.200pt]{0.400pt}{4.818pt}}
\put(298.0,123.0){\rule[-0.200pt]{0.400pt}{2.409pt}}
\put(298.0,850.0){\rule[-0.200pt]{0.400pt}{2.409pt}}
\put(399.0,123.0){\rule[-0.200pt]{0.400pt}{2.409pt}}
\put(399.0,850.0){\rule[-0.200pt]{0.400pt}{2.409pt}}
\put(451.0,123.0){\rule[-0.200pt]{0.400pt}{2.409pt}}
\put(451.0,850.0){\rule[-0.200pt]{0.400pt}{2.409pt}}
\put(476.0,123.0){\rule[-0.200pt]{0.400pt}{4.818pt}}
\put(476,82){\makebox(0,0){$100$}}
\put(476.0,840.0){\rule[-0.200pt]{0.400pt}{4.818pt}}
\put(553.0,123.0){\rule[-0.200pt]{0.400pt}{2.409pt}}
\put(553.0,850.0){\rule[-0.200pt]{0.400pt}{2.409pt}}
\put(654.0,123.0){\rule[-0.200pt]{0.400pt}{2.409pt}}
\put(654.0,850.0){\rule[-0.200pt]{0.400pt}{2.409pt}}
\put(706.0,123.0){\rule[-0.200pt]{0.400pt}{2.409pt}}
\put(706.0,850.0){\rule[-0.200pt]{0.400pt}{2.409pt}}
\put(731.0,123.0){\rule[-0.200pt]{0.400pt}{4.818pt}}
\put(731,82){\makebox(0,0){$1000$}}
\put(731.0,840.0){\rule[-0.200pt]{0.400pt}{4.818pt}}
\put(808.0,123.0){\rule[-0.200pt]{0.400pt}{2.409pt}}
\put(808.0,850.0){\rule[-0.200pt]{0.400pt}{2.409pt}}
\put(909.0,123.0){\rule[-0.200pt]{0.400pt}{2.409pt}}
\put(909.0,850.0){\rule[-0.200pt]{0.400pt}{2.409pt}}
\put(961.0,123.0){\rule[-0.200pt]{0.400pt}{2.409pt}}
\put(961.0,850.0){\rule[-0.200pt]{0.400pt}{2.409pt}}
\put(986.0,123.0){\rule[-0.200pt]{0.400pt}{4.818pt}}
\put(986,82){\makebox(0,0){$10000$}}
\put(986.0,840.0){\rule[-0.200pt]{0.400pt}{4.818pt}}
\put(1062.0,123.0){\rule[-0.200pt]{0.400pt}{2.409pt}}
\put(1062.0,850.0){\rule[-0.200pt]{0.400pt}{2.409pt}}
\put(1164.0,123.0){\rule[-0.200pt]{0.400pt}{2.409pt}}
\put(1164.0,850.0){\rule[-0.200pt]{0.400pt}{2.409pt}}
\put(1216.0,123.0){\rule[-0.200pt]{0.400pt}{2.409pt}}
\put(1216.0,850.0){\rule[-0.200pt]{0.400pt}{2.409pt}}
\put(1241.0,123.0){\rule[-0.200pt]{0.400pt}{4.818pt}}
\put(1241,82){\makebox(0,0){$100000$}}
\put(1241.0,840.0){\rule[-0.200pt]{0.400pt}{4.818pt}}
\put(1317.0,123.0){\rule[-0.200pt]{0.400pt}{2.409pt}}
\put(1317.0,850.0){\rule[-0.200pt]{0.400pt}{2.409pt}}
\put(1419.0,123.0){\rule[-0.200pt]{0.400pt}{2.409pt}}
\put(1419.0,850.0){\rule[-0.200pt]{0.400pt}{2.409pt}}
\put(221.0,123.0){\rule[-0.200pt]{0.400pt}{177.543pt}}
\put(221.0,123.0){\rule[-0.200pt]{293.416pt}{0.400pt}}
\put(1439.0,123.0){\rule[-0.200pt]{0.400pt}{177.543pt}}
\put(221.0,860.0){\rule[-0.200pt]{293.416pt}{0.400pt}}
\put(40,491){\makebox(0,0){$\e$}}
\put(830,21){\makebox(0,0){Degrees of freedom ${N}$}}
\put(1272,818){\makebox(0,0)[r]{Uniform refinement}}
\put(1292.0,818.0){\rule[-0.200pt]{24.090pt}{0.400pt}}
\put(408,568){\usebox{\plotpoint}}
\multiput(408.58,568.00)(0.499,0.791){303}{\rule{0.120pt}{0.733pt}}
\multiput(407.17,568.00)(153.000,240.479){2}{\rule{0.400pt}{0.366pt}}
\multiput(561.58,805.93)(0.499,-1.102){305}{\rule{0.120pt}{0.981pt}}
\multiput(560.17,807.96)(154.000,-336.965){2}{\rule{0.400pt}{0.490pt}}
\multiput(715.00,469.92)(1.709,-0.498){87}{\rule{1.460pt}{0.120pt}}
\multiput(715.00,470.17)(149.970,-45.000){2}{\rule{0.730pt}{0.400pt}}
\multiput(868.00,424.92)(0.771,-0.499){197}{\rule{0.716pt}{0.120pt}}
\multiput(868.00,425.17)(152.514,-100.000){2}{\rule{0.358pt}{0.400pt}}
\multiput(1022.00,324.92)(1.506,-0.498){99}{\rule{1.300pt}{0.120pt}}
\multiput(1022.00,325.17)(150.302,-51.000){2}{\rule{0.650pt}{0.400pt}}
\multiput(1175.00,273.92)(1.682,-0.498){89}{\rule{1.439pt}{0.120pt}}
\multiput(1175.00,274.17)(151.013,-46.000){2}{\rule{0.720pt}{0.400pt}}
\put(408,568){\raisebox{-.8pt}{\makebox(0,0){$\Diamond$}}}
\put(561,810){\raisebox{-.8pt}{\makebox(0,0){$\Diamond$}}}
\put(715,471){\raisebox{-.8pt}{\makebox(0,0){$\Diamond$}}}
\put(868,426){\raisebox{-.8pt}{\makebox(0,0){$\Diamond$}}}
\put(1022,326){\raisebox{-.8pt}{\makebox(0,0){$\Diamond$}}}
\put(1175,275){\raisebox{-.8pt}{\makebox(0,0){$\Diamond$}}}
\put(1329,229){\raisebox{-.8pt}{\makebox(0,0){$\Diamond$}}}
\put(1342,818){\raisebox{-.8pt}{\makebox(0,0){$\Diamond$}}}
\put(1272,777){\makebox(0,0)[r]{Adaptive refinement based on $\eta$}}
\multiput(1292,777)(20.756,0.000){5}{\usebox{\plotpoint}}
\put(1392,777){\usebox{\plotpoint}}
\put(408,568){\usebox{\plotpoint}}
\multiput(408,568)(11.091,17.543){14}{\usebox{\plotpoint}}
\multiput(561,810)(8.584,-18.897){18}{\usebox{\plotpoint}}
\multiput(715,471)(19.912,-5.857){8}{\usebox{\plotpoint}}
\multiput(868,426)(17.407,-11.304){9}{\usebox{\plotpoint}}
\multiput(1022,326)(19.690,-6.563){8}{\usebox{\plotpoint}}
\put(1178.02,255.97){\usebox{\plotpoint}}
\put(1187.68,239.40){\usebox{\plotpoint}}
\put(1201.24,224.05){\usebox{\plotpoint}}
\put(1218.29,212.36){\usebox{\plotpoint}}
\put(1230.88,197.13){\usebox{\plotpoint}}
\put(1249.92,188.90){\usebox{\plotpoint}}
\put(1268.92,180.65){\usebox{\plotpoint}}
\put(1288.25,173.69){\usebox{\plotpoint}}
\put(1308.35,168.51){\usebox{\plotpoint}}
\put(1327.42,160.87){\usebox{\plotpoint}}
\put(1345.79,151.38){\usebox{\plotpoint}}
\multiput(1356,147)(19.457,-7.227){2}{\usebox{\plotpoint}}
\put(1391,134){\usebox{\plotpoint}}
\put(408,568){\makebox(0,0){$+$}}
\put(561,810){\makebox(0,0){$+$}}
\put(715,471){\makebox(0,0){$+$}}
\put(868,426){\makebox(0,0){$+$}}
\put(1022,326){\makebox(0,0){$+$}}
\put(1175,275){\makebox(0,0){$+$}}
\put(1175,266){\makebox(0,0){$+$}}
\put(1175,260){\makebox(0,0){$+$}}
\put(1178,256){\makebox(0,0){$+$}}
\put(1181,252){\makebox(0,0){$+$}}
\put(1182,243){\makebox(0,0){$+$}}
\put(1187,240){\makebox(0,0){$+$}}
\put(1196,232){\makebox(0,0){$+$}}
\put(1200,225){\makebox(0,0){$+$}}
\put(1213,215){\makebox(0,0){$+$}}
\put(1223,210){\makebox(0,0){$+$}}
\put(1229,198){\makebox(0,0){$+$}}
\put(1242,192){\makebox(0,0){$+$}}
\put(1265,183){\makebox(0,0){$+$}}
\put(1275,177){\makebox(0,0){$+$}}
\put(1303,170){\makebox(0,0){$+$}}
\put(1321,165){\makebox(0,0){$+$}}
\put(1335,156){\makebox(0,0){$+$}}
\put(1356,147){\makebox(0,0){$+$}}
\put(1391,134){\makebox(0,0){$+$}}
\put(1342,777){\makebox(0,0){$+$}}
\put(221.0,123.0){\rule[-0.200pt]{0.400pt}{177.543pt}}
\put(221.0,123.0){\rule[-0.200pt]{293.416pt}{0.400pt}}
\put(1439.0,123.0){\rule[-0.200pt]{0.400pt}{177.543pt}}
\put(221.0,860.0){\rule[-0.200pt]{293.416pt}{0.400pt}}
\end{picture}

\medskip
{\bf Figure 5.6}. Example 3: Global error for the uniform and adaptive refinements, with $\omega=15.0$
\end{center}
%

\newpage
\begin{center}
{\bf Table 5.20}. Example 3: hybrid adaptive refinement with $\omega=15.0$
\begin{tabular}{|c||c|c||c|c||c|c||c|} 
\hline  
$N$ 	 & 	 $\e(u)$ 	 & 	 $r(u)$ 	 & $\e(\bm{\sigma})$ 	 & 	 $r(\bm{\sigma})$ & 	 $\e$      	 & 	        $r$ 	 & 	 	  $\e/\eta$ 	 \\\hline  
    54 	 & 	 3.665e-01 	 & 	      ----- 	 & 	 7.394e-01 	 & 	      ----- 	 & 	 8.252e-01 	 & 	      ----- 	 & 	 	     0.0479 	 \\ 
   216 	 & 	 5.089e+00 	 & 	      ----- 	 & 	 7.604e+00 	 & 	      ----- 	 & 	 9.150e+00 	 & 	      ----- 	 & 	 	     0.0479 	 \\ 
   864 	 & 	 2.585e-01 	 & 	     4.2995 	 & 	 1.793e-01 	 & 	     5.4060 	 & 	 3.146e-01 	 & 	     4.8623 	 & 	 	     0.0992 	 \\ 
  3456 	 & 	 1.564e-01 	 & 	     0.7244 	 & 	 1.270e-01 	 & 	     0.4985 	 & 	 2.015e-01 	 & 	     0.6429 	 & 	 	     0.2020 	 \\ 
 13824 	 & 	 6.458e-02 	 & 	     1.2765 	 & 	 3.874e-02 	 & 	     1.7122 	 & 	 7.531e-02 	 & 	     1.4197 	 & 	 	     0.2791 	 \\ 
 55296 	 & 	 3.956e-02 	 & 	     0.7070 	 & 	 2.200e-02 	 & 	     0.8166 	 & 	 4.526e-02 	 & 	     0.7344 	 & 	 	     0.1810 	 \\ 
 55350 	 & 	 3.667e-02 	 & 	   155.5654 	 & 	 1.924e-02 	 & 	   274.7426 	 & 	 4.141e-02 	 & 	   182.4908 	 & 	 	     0.1600 	 \\ 
 55404 	 & 	 3.437e-02 	 & 	   132.4926 	 & 	 1.866e-02 	 & 	    62.6713 	 & 	 3.911e-02 	 & 	   117.0130 	 & 	 	     0.1529 	 \\ 
 56808 	 & 	 3.331e-02 	 & 	     2.5103 	 & 	 1.731e-02 	 & 	     6.0037 	 & 	 3.754e-02 	 & 	     3.2790 	 & 	 	     0.1474 	 \\ 
 58185 	 & 	 3.162e-02 	 & 	     4.3581 	 & 	 1.679e-02 	 & 	     2.5333 	 & 	 3.580e-02 	 & 	     3.9634 	 & 	 	     0.1558 	 \\ 
 59085 	 & 	 2.885e-02 	 & 	    11.9475 	 & 	 1.567e-02 	 & 	     8.9721 	 & 	 3.283e-02 	 & 	    11.2811 	 & 	 	     0.1683 	 \\ 
 61578 	 & 	 2.787e-02 	 & 	     1.6716 	 & 	 1.561e-02 	 & 	     0.2008 	 & 	 3.194e-02 	 & 	     1.3284 	 & 	 	     0.1670 	 \\ 
 66816 	 & 	 2.600e-02 	 & 	     1.6995 	 & 	 1.415e-02 	 & 	     2.4048 	 & 	 2.960e-02 	 & 	     1.8643 	 & 	 	     0.1736 	 \\ 
 68994 	 & 	 2.442e-02 	 & 	     3.9088 	 & 	 1.289e-02 	 & 	     5.8062 	 & 	 2.761e-02 	 & 	     4.3324 	 & 	 	     0.1837 	 \\ 
 77976 	 & 	 2.224e-02 	 & 	     1.5276 	 & 	 1.144e-02 	 & 	     1.9558 	 & 	 2.501e-02 	 & 	     1.6190 	 & 	 	     0.1882 	 \\ 
 85212 	 & 	 2.110e-02 	 & 	     1.1850 	 & 	 1.058e-02 	 & 	     1.7558 	 & 	 2.360e-02 	 & 	     1.3021 	 & 	 	     0.1892 	 \\ 
 90324 	 & 	 1.899e-02 	 & 	     3.6171 	 & 	 8.936e-03 	 & 	     5.7985 	 & 	 2.099e-02 	 & 	     4.0337 	 & 	 	     0.1986 	 \\ 
101547 	 & 	 1.802e-02 	 & 	     0.8974 	 & 	 8.402e-03 	 & 	     1.0516 	 & 	 1.988e-02 	 & 	     0.9251 	 & 	 	     0.1915 	 \\ 
125163 	 & 	 1.649e-02 	 & 	     0.8449 	 & 	 7.658e-03 	 & 	     0.8872 	 & 	 1.819e-02 	 & 	     0.8525 	 & 	 	     0.1943 	 \\ 
135999 	 & 	 1.553e-02 	 & 	     1.4469 	 & 	 7.239e-03 	 & 	     1.3554 	 & 	 1.714e-02 	 & 	     1.4307 	 & 	 	     0.2025 	 \\ 
175401 	 & 	 1.435e-02 	 & 	     0.6235 	 & 	 6.893e-03 	 & 	     0.3850 	 & 	 1.592e-02 	 & 	     0.5798 	 & 	 	     0.2095 	 \\ 
207495 	 & 	 1.364e-02 	 & 	     0.5987 	 & 	 6.761e-03 	 & 	     0.2306 	 & 	 1.523e-02 	 & 	     0.5279 	 & 	 	     0.2157 	 \\ 
233838 	 & 	 1.237e-02 	 & 	     1.6463 	 & 	 6.280e-03 	 & 	     1.2333 	 & 	 1.387e-02 	 & 	     1.5633 	 & 	 	     0.2292 	 \\ 
284310 	 & 	 1.139e-02 	 & 	     0.8454 	 & 	 5.567e-03 	 & 	     1.2338 	 & 	 1.267e-02 	 & 	     0.9227 	 & 	 	     0.2284 	 \\ 
388269 	 & 	 1.008e-02 	 & 	     0.7805 	 & 	 4.770e-03 	 & 	     0.9917 	 & 	 1.115e-02 	 & 	     0.8202 	 & 	 	     0.2255 	 \\ 
\hline  
\end{tabular} 
\end{center}

\begin{center}
\includegraphics[height=10cm,width=13cm]{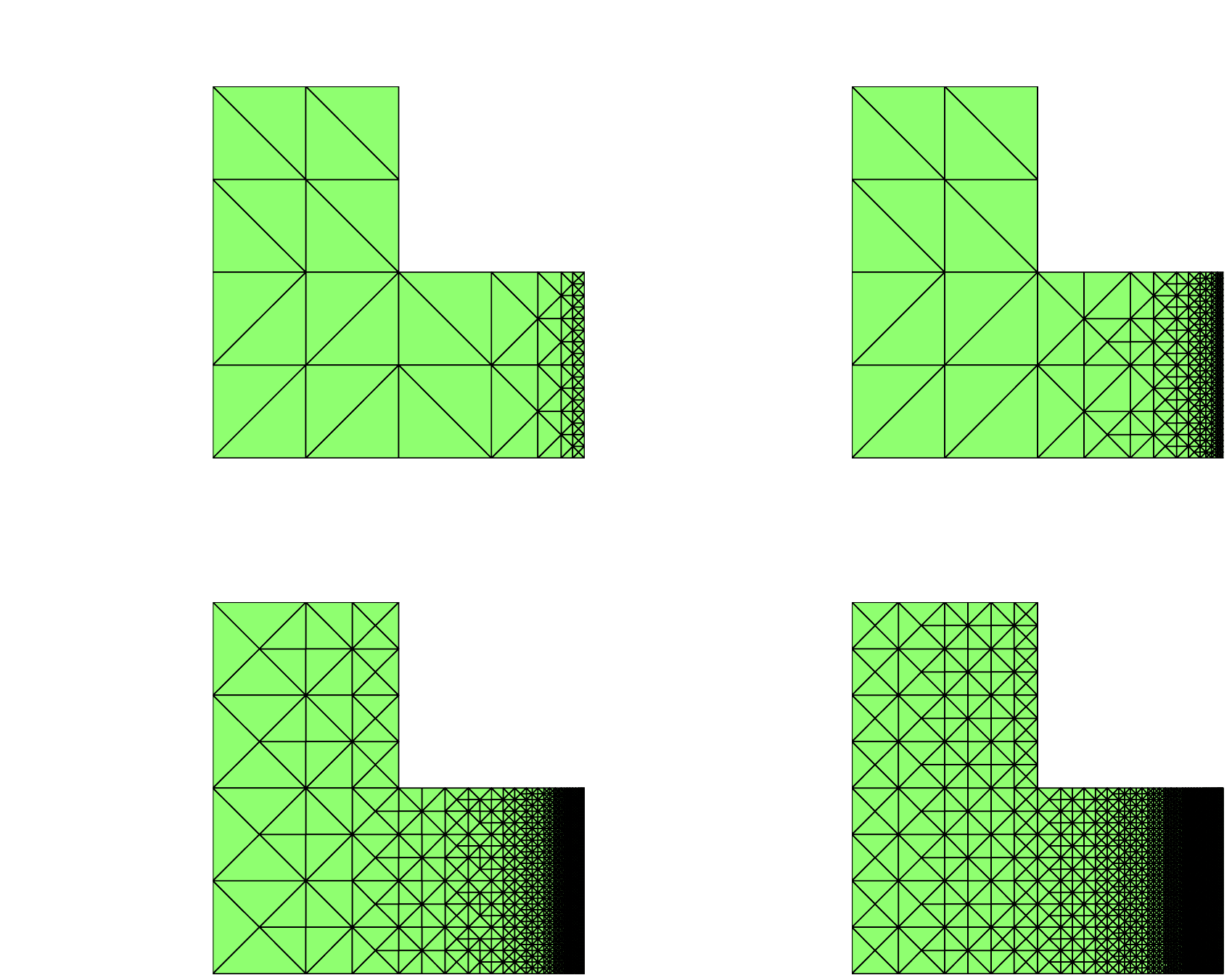}
\medskip

{\bf Figure 5.7:} {\rm Adapted intermediate meshes with 1134, 6732, 46521 and 173520 dof (Example 2, using $\omega=1.0$).}
\end{center}

\newpage

\begin{center}
\includegraphics[height=10cm,width=13cm]{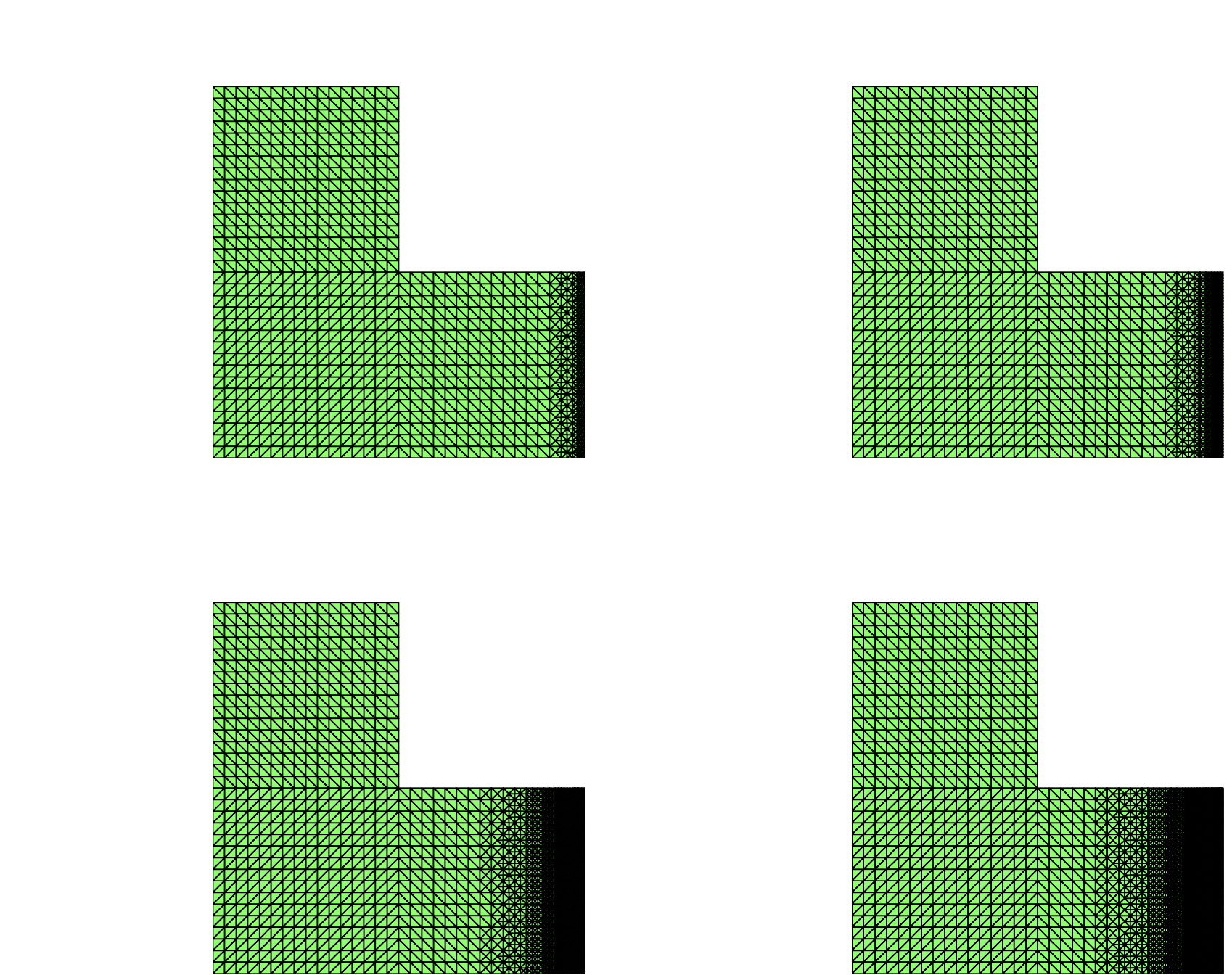}
\medskip

{\bf Figure 5.8:} {\rm Adapted intermediate meshes with 18558, 34290, 97893 and 171693 dof (Example 2, using $\omega=10.0$).}
\end{center}
%
%
\begin{center}
\includegraphics[height=10cm,width=13cm]{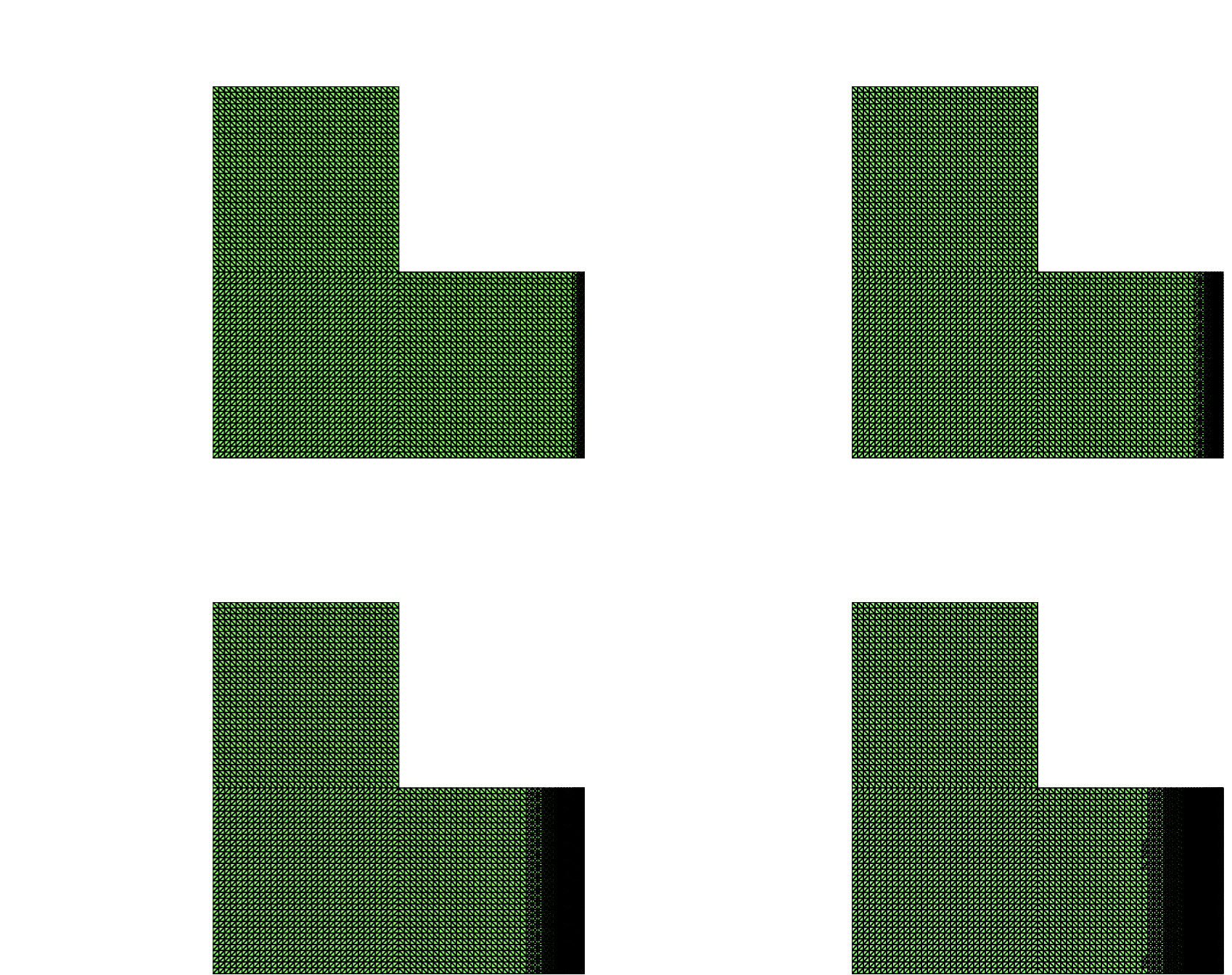}
\medskip

{\bf Figure 5.9:} {\rm Adapted intermediate meshes with 57888, 72018, 133092 and 214128 dof (Example 2, using $\omega=15.0$).}
\end{center}
%
%
\begin{center}
\includegraphics[height=10cm,width=13cm]{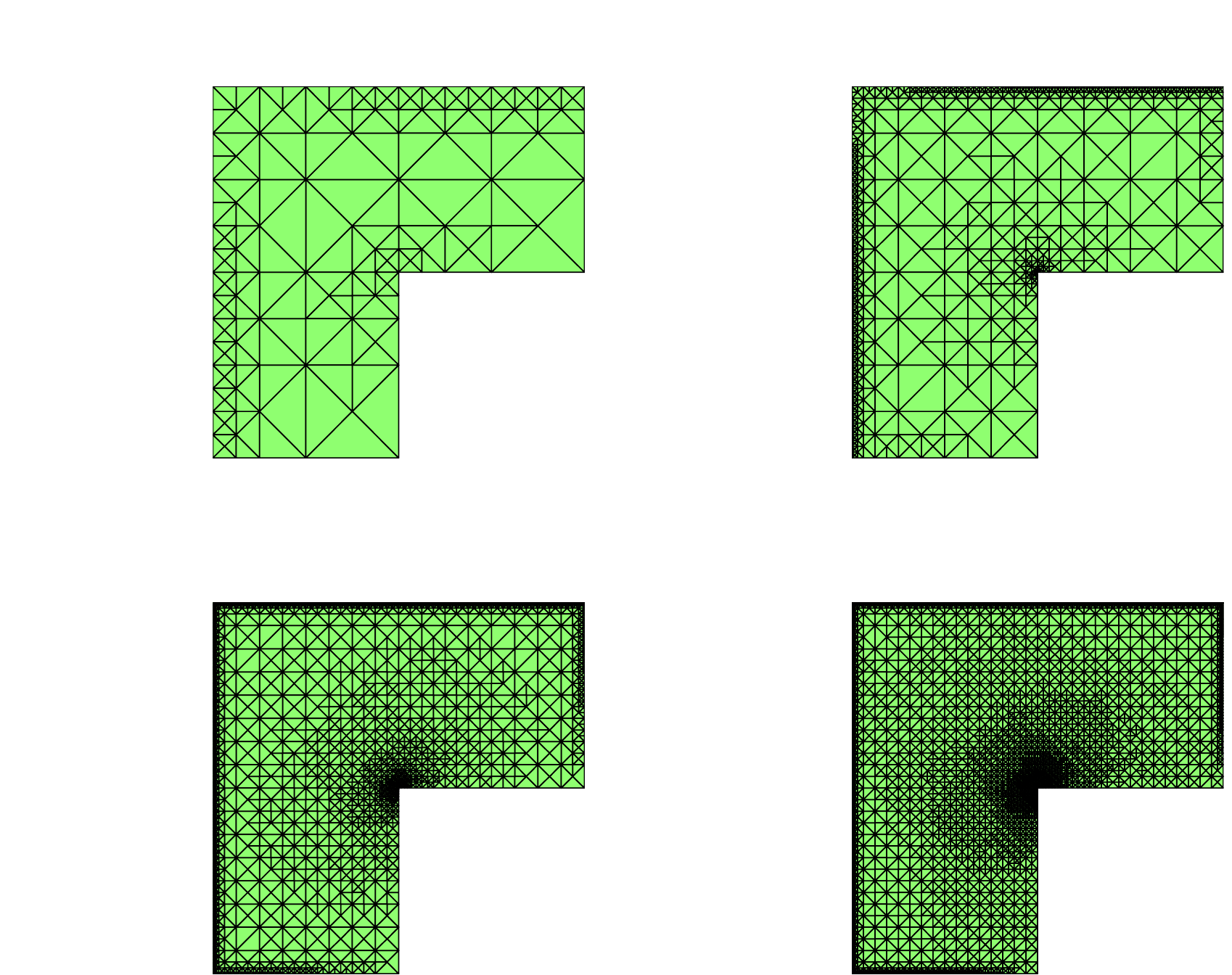}
\medskip

{\bf Figure 5.10:} {\rm Adapted intermediate meshes with 1944, 10854, 52974 and 151002 dof (Example 3, using $\omega=1.0$).}
\end{center}

%
%
\begin{center}
\includegraphics[height=10cm,width=13cm]{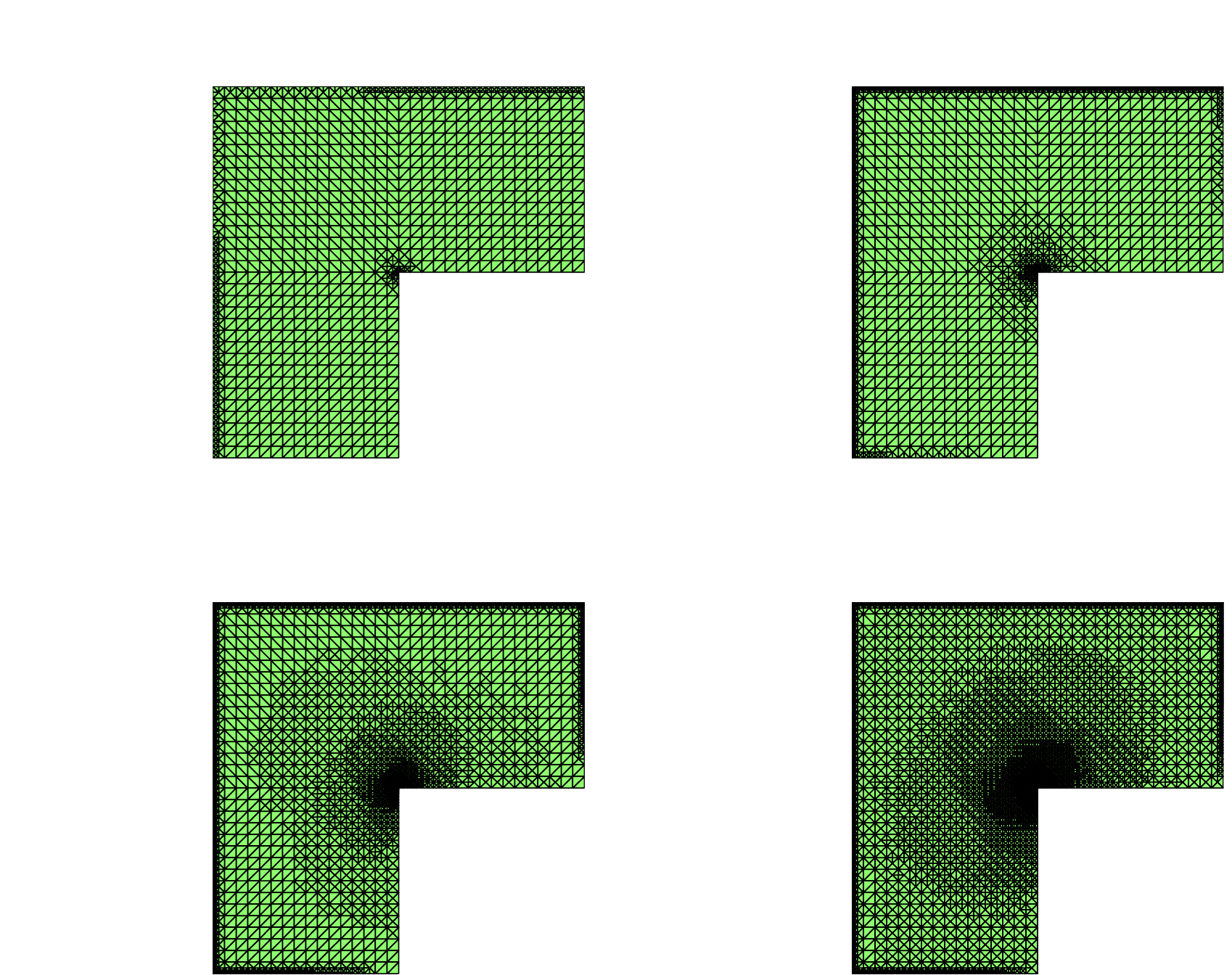}
\medskip

{\bf Figure 5.11:} {\rm Adapted intermediate meshes with 18162, 42012, 107829 and 218754 dof (Example 3, using $\omega=10.0$).}
\end{center}
%
%
%
\subsection*{Acknowledgements}
This work was done during short visits of  R. Bustinza and
T.P. Barrios to the Departamento de
Ingenier\'{\i}a Matem\'atica e Inform\'atica, Universidad P\'ublica de Navarra,
 Campus Tudela, Spain. They  wish to thank professors V. Dom\'\i nguez
and
 R. Ortega, both from this University, for the kind
hospitality.


\end{document}